\def\figlabel#1{\label{#1}}
\newtheorem{theorem}{Theorem}[section]
\newtheorem{lemma}[theorem]{Lemma}
\newtheorem{corollary}[theorem]{Corollary}
\newtheorem{remark}[theorem]{Remark}
\newtheorem{proposition}[theorem]{Proposition}
\newcommand{\be}{\begin{equation}}
\newcommand{\ee}{\end{equation}}
\newcommand{\eps}{\varepsilon}
\newcommand{\ga}{\gamma}
\newcommand{\dps}{\displaystyle}
\newcommand{\RR}{\mathbb{R}}
\newcommand{\NN}{\mathbb{N}}
\newcommand{\CC}{\mathbb{C}}
\newcommand{\TT}{\mathbb{T}}
\newcommand{\ZZ}{\mathbb{Z}}
\newcommand{\MM}{\mathcal{M}}
\newcommand{\WW}{\mathcal{W}}
\newcommand{\GG}{\mathcal{G}}
\newcommand{\BB}{\mathcal{B}}
\newcommand{\LL}{\mathcal{L}}
\newcommand{\DD}{\mathcal{D}}
\newcommand{\XX}{\mathcal{X}}
\newcommand{\OO}{\mathcal{O}}
\newcommand{\FF}{\mathcal{F}}
\newcommand{\VV}{\mathcal{V}}
\newcommand{\TTT}{\mathcal{T}}
\newcommand{\EE}{\mathcal{E}}
\newcommand{\JJ}{\mathcal{J}}
\newcommand{\CCC}{\mathcal{C}}
\newcommand{\ii}{^{-1}}
\newcommand{\de}{\delta}
\newcommand{\pa}{\partial}
\newcommand{\la}{\lambda}
\newcommand{\kk}{\kappa}
\newcommand{\rr}{\rho}
\newcommand{\si}{\sigma}
\newcommand{\ol}{\overline}
\renewcommand{\Re}{\mathrm{Re\, }}
\renewcommand{\Im}{\mathrm{Im\,}}
\newcommand{\wt}{\widetilde}
\newcommand{\al}{\alpha}
\begin{document}
\title{Exponentially and non-exponentially small splitting of separatrices for the pendulum with a fast
meromorphic perturbation}
\author{Marcel Guardia\thanks{\tt marcel.guardia@upc.edu} \ and Tere M. Seara\thanks{\tt tere.m-seara@upc.edu}}
\maketitle
\medskip
\begin{center}$^{*}$
Department of Mathematics\\
Mathematics Building, University of Maryland\\
College Park, MD 20742-4015, USA 
\end{center}
\smallskip
\begin{center}$^\dagger$
Departament de Matem\`atica Aplicada I\\
Universitat Polit\`ecnica de Catalunya\\
Diagonal 647, 08028 Barcelona, Spain
\end{center}

\begin{abstract}
In this paper we study the  splitting of separatrices phenomenon which arises when one considers a Hamiltonian System of one degree of freedom with a fast periodic or quasiperiodic and meromorphic in the state variables perturbation. The obtained results are different from the previous ones in the literature, which  mainly assume algebraic or trigonometric polynomial dependence on the state variables. As a model, we consider the pendulum equation with several meromorphic perturbations and we show the sensitivity of the size of the splitting on the  width of the analyticity strip of the perturbation with respect to the state variables. We show that the size of the splitting is exponentially small if the strip of analyticity is wide enough. Furthermore, we see that the splitting
 grows as the width of the analyticity strip shrinks, even becoming non-exponentially small for very narrow strips. Our results prevent from using polynomial truncations of the meromorphic perturbation to compute the size of the splitting of separatrices.
\end{abstract}

\section{Introduction}\label{sec:intro}
Exponentially small splitting of separatrices appears in analytic dynamical systems  with different time scales. A paradigmatic example are analytic Hamiltonian systems of one degree of freedom with a fast non-autonomous periodic or quasiperiodic 
perturbation. Namely, systems of the form 
\begin{equation}  \label{eq:GeneralHam}
\begin{split}
H\left(x,y,\frac{t}{\eps}\right)&=H_0(x,y)+\mu\eps^{\eta}H_1\left(x,y,\frac{t}{
\eps}\right)\\
&=\frac{y^2}{2}+V(x)+\mu\eps^{\eta}H_1\left(x,y,\frac{t}{\eps}\right),
\end{split}
\end{equation}
where $\eps>0$ is a small parameter, $\mu\in\RR$, $\eta\geq 0$, and  $H_0$ has a hyperbolic critical point
whose invariant manifolds coincide along a separatrix.

This phenomenon was first pointed out by Poincar\'e \cite{Poincare99} but it was
not until the last decades  when this problem started to be
studied rigorously (see for instance \cite{HolmesMS88, HolmesMS91,DelshamsS92, Fontich93, ChierchiaG94,
Gelfreich94, Fontich95, Sauzin95,DelshamsGJS97, DelshamsS97, Gelfreich97,
Treshev97, GalGM99, Gelfreich00, Sauzin01,DelshamsGS04, BaldomaF04, BaldomaF05, Baldoma06, Olive06,
GuardiaOS10, BaldomaFGS11}). 
Nevertheless, the results which obtain asymptotic formulas for the splitting only deal with
Hamiltonian systems whose perturbation is an algebraic or trigonometric polynomial with respect to the state variables  $x$ and $y$. In all these cases the splitting of separatrices
is exponentially small with respect to the parameter $\eps$. Moreover, the
imaginary part of the complex singularity of the time-parameterization of the
unperturbed separatrix closest to the real axis plays a significant role. 


All the previous works dealing with the periodic case, show that  under certain non-degeneracy conditions, the distance between the invariant manifolds is of order 
\begin{equation}\label{eq:OrdreSplitting}
 d\sim \mu\eps^q e^{-\dps\tfrac{a}{\eps}},
\end{equation}
where $a$ is the imaginary part of the complex singularity of the time-parameterization of the
unperturbed separatrix closest to the real axis and $q\in\RR$. Moreover, for $\eta>\eta^*$, where $\eta^*$  depends on the properties of both $H_0$ and $H_1$, the splitting
is well predicted by the Poincar\'e-Arnol'd-Melnikov method (see
\cite{Melnikov63}, and \cite{GuckenheimerH83} for a more modern exposition of this method). This
case is usually called \emph{regular case}. In the \emph{singular case}
$\eta=\eta^*$ the splitting is exponentially small as \eqref{eq:OrdreSplitting} but the first order does not
coincide with the Melnikov prediction (see \cite{BaldomaFGS11} and references therein). In the quasiperiodic case, under certain hypotheses, one can also show that the Melnikov method predicts correctly the splitting and the size of both the Melnikov function and the splitting depends strongly on $a$ \cite{DelshamsGJS97, Sauzin01, DelshamsGS04}. However, this case is much less understood and there are very few  results.

Nevertheless, many of the models known, for instance in celestial mechanics, are
not algebraic or trigonometric polynomials in the state variables but involve functions with a finite strip
of analyticity (see, for instance, \cite{SimoL80,Xia92, MartinezP94,FejozGKR11}). 
As far as the authors know, the only result dealing with the exponentially small splitting of separatrices in the periodic case for
non-entire perturbations is \cite{Gelfreich97a}. However, the author considers
models with a strip of analyticity very big with respect to $\eps$ so that he
can deal with them as if they were polynomial. In the quasiperiodic case, as far as the authors know, there are not previous results.

The goal of this paper is to study how the splitting of separatrices behavior
depends on the width of the analyticity strip when one considers a meromorphic
perturbation. Essentially, we see that the size of the splitting depends
strongly on this width and that, in general, the singularity of the separatrix
does not play any role in this size. We consider also the case when the strip
tends to infinity as $\eps\rightarrow 0$ and we see how the size of the splitting tends to
the size known for the entire cases. In the other limiting case, namely when the strip of analyticity shrinks to the real line as $\eps\rightarrow 0$, we  see that even if the perturbation is still analytic, the splitting becomes algebraic in $\eps$ both in the periodic and the quasiperiodic case.

We focus our study in particular examples, which allow us to analyze in great
detail the behavior of the splitting. Nevertheless, we expect the same to happen
for fairly general systems.

We work with time periodic and quasiperiodic perturbations of the classical pendulum. More concretely, we consider the following model,
\begin{equation}\label{def:ToyModel:ode}
\ddot x=\sin x+\mu\eps^\eta \frac{\sin x}{(1+\al\sin x)^2 }f\left(\frac{t}{\eps}\right),
\end{equation}
where $f(\tau)$ is an analytic function which depends either periodically or quasiperiodically on $\tau$.

The associated system
\begin{equation}\label{def:sistema}
 \left\{
\begin{split}
\dot x &=y\\
\dot y&=\sin x+\mu\eps^\eta \frac{\sin x}{(1+\al\sin x)^2 }f\left(\frac{t}{\eps}\right)\\
\end{split}
\right.
\end{equation}
is Hamiltonian with Hamiltonian function
\begin{equation}\label{def:ToyModel:Hamiltonian}
H\left(x,y,\frac{t}{\eps}\right)=\frac{y^2}{2}+\cos x-1 +\mu\eps^\eta
\psi(x)f\left(\frac{t}{\eps}\right),
\end{equation}
where $\psi(x)$ is defined by $\psi'(x)=-\sin x/(1+\al\sin x)^2$ and $\psi(0)=0$.
Here $\al\in[0,1)$ is a parameter which changes the width of the analyticity strip of $\psi$, which is given by
\begin{equation}\label{def:Strip}
 |\Im x|\leq \ln \left(\frac{1+\sqrt{1-\al^2}}{\al}\right).
\end{equation}
When $\alpha=0$, the system is entire in $x$ and $y$ and has been previously studied  for particular choices of $f$ in \cite{Treshev97,DelshamsGJS97, OliveSS03, Olive06}, whereas when $\al=1$, $\psi$ is not defined in $x=3\pi/2$. In this paper, we consider any $\al\in(0,1)$ either independent of or dependent on $\eps$.

In the periodic case, as experts know, the only important property to obtain the asymptotic formula for the splitting is that one  has to require that the first harmonics of $f$ are different from zero. Thus, we choose
\[
 f(\tau)=\sin\tau.
\]
Dealing with any other function with non-zero first Fourier coefficients is analogous. 
In this setting, one can rephrase system \eqref{def:sistema} as a Hamiltonian system of two degrees of freedom considering $\tau=t/\eps$ as a new angle and $I$ its conjugate action, which gives the Hamiltonian
\begin{equation}\label{def:ToyModel:Hamiltonian:2dof}
\begin{split}
K\left(x,y,\tau,I\right)=&\frac{I}{\eps}+H\left(x,y,\tau\right)\\
=&\frac{I}{\eps}+\frac{y^2}{2}+\cos x-1 +\mu\eps^\eta
\psi(x)\sin\tau.
\end{split}
\end{equation}


In the quasiperiodic case, we consider the same model
\eqref{def:ToyModel:ode} with $f(\tau)=F(\tau,\gamma \tau)$, where 
\begin{equation}\label{def:GoldenMean}
 \ga=\frac{\sqrt{5}+1}{2}
\end{equation}
is the golden mean number and $F:\TT^2\rightarrow \RR$. Note that if one takes $\al=0$, one recovers the model considered in \cite{DelshamsGJS97}. On the function $F$ we assume the same  hypotheses that are assumed in that article. Namely, if one considers its Fourier expansion in the angles $\theta=(\theta_1,\theta_2)$,
\begin{equation}\label{def:F:Fourier}
 F(\theta_1,\theta_2)=\sum_{k\in\ZZ^2}F^{[k]}e^{ik\cdot \theta},
\end{equation}
we assume that there exist constants $r_1,r_2>0$ such that
\begin{equation}\label{hyp:QP1}
 \sup_{k=(k_1,k_2)\in\ZZ^2}\left|F^{[k]}e^{r_1|k_1|+r_2|k_2|}\right|<\infty.
\end{equation}
Furthermore, we assume that there exist $a$ and $k_0$ such that
\begin{equation}\label{hyp:QP2}
F^{[k]}>ae^{-r_1|k_1|-r_2|k_2|}
\end{equation}
for all $|k_1|/|k_2|$ which are continuous fraction convergents of $\gamma$ and $|k_2|>k_0$. An example of function satisfying these hypotheses is 
\[
 F(\theta_1,\theta_2)=\frac{\cos\theta_1\cos\theta_2}{(\cosh r_1-\cos\theta_1)(\cosh r_2-\cos\theta_2)}.
\]

Introducing the angle coordinates $(\theta_1,\theta_2)$ and their conjugate actions $(I_1,I_2)$, system \eqref{def:sistema} can be seen as a 3 degrees of freedom Hamiltonian System with Hamiltonian
\begin{equation}\label{def:ToyModel:Hamiltonian:QP}
\begin{split}
K\left(x,y,\theta,I\right)&=\frac{\omega \cdot I}{\eps}+H\left(x,y,\theta\right)\\
&=\frac{\omega \cdot I}{\eps}+\frac{y^2}{2}+\cos x-1 +\mu\eps^\eta
\psi(x)F(\theta_1,\theta_2),
\end{split}
\end{equation}
where $\omega=(1,\gamma)$ is the frequency vector.

We have chosen these particular models for several reasons. First, the
hyperbolic critical point $(0,0)$ of the unperturbed pendulum persists when the perturbation is added.
This fact is not crucial but simplifies the computations. Second, with the chosen function $\psi$, the size of the Melnikov function depends on the strip of analyticity of the perturbations, as is expected to happen for general systems. In Remark 
 \ref{remark:Singularitats} in Section
\ref{sec:Melnikov}, we consider the non-generic model
\[
\ddot x=\sin x+\mu\eps^\eta \frac{\sin x}{(1-\al\cos x)^2 }\sin\frac{t}{\eps},
\]
which has the same strip of analyticity \eqref{def:Strip}. However, due to certain cancellations, the size of Melnikov function does not depend on this strip. 

In the quasiperiodic case, we have chosen a very specific function $F$. On one hand, we have chosen the frequency vector $\omega=(1,\ga)$, where $\ga$ is the golden mean \eqref{def:GoldenMean}. The size of the splitting strongly depends on the diophantine properties of the chosen frequency  and, in fact, its rigorous study has been only done, as far as the authors know, for quadratic frequencies (see \cite{Sauzin01, LochakMS03, DelshamsG03}). On the other hand, the chosen function $F$ has finite strip of analyticity in the angles $(\theta_1,\theta_2)$. This is the only kind of systems for which it is known that the Melnikov function predicts correctly the size of the splitting (see \cite{Simo94, SimoV01}). In fact, in the quasiperiodic case, the width of the strip of analyticity of $F$ also plays a crucial role in the size of the splitting.

Finally, as we have already explained, this particular
choice  of the perturbation makes everything easily computable. This allows us to
obtain explicit formulas for the first order of the splitting of separatrices, using the Melnikov function,
and see how it depends on the width of the analyticity strip of $\psi$. Then, we can compare our results for $\al$ small with the existing previous ones for $\al=0$.

As we have already said, when $\al=1$ system \eqref{def:sistema} is not defined at $x=3\pi/2$. Therefore, it has no sense to study the splitting problem for $\al$ too close to 1. Indeed, the perturbation is small in the real line provided
\[
 \frac{\eps^\eta}{(1-\al)^2}\ll 1.
\]
Nevertheless, as usually happen in the exponentially small splitting problems (see \cite{GuardiaOS10}), we will see that the splitting problem has sense under the slightly weaker hypothesis
\[
 \frac{\eps^{\eta}}{(1-\al)^{3/2}}\ll1.
\]

\begin{figure}[h]
\begin{center}
\psfrag{xx}{$x$}\psfrag{xxxxx}{$y$}
\includegraphics[height=6cm]{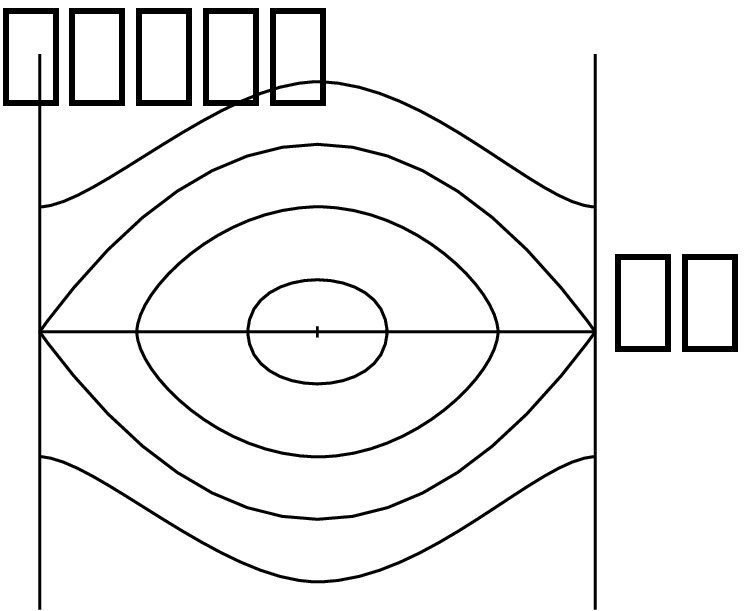}
\end{center}
\caption{\figlabel{fig:Pendol} Phase portrait of the pendulum. In it, one can see the two separatrices. We focus our study on the upper one.}
\end{figure}

When $\mu=0$, the system is the classical pendulum. It has a hyperbolic critical point at $(0,0)$ whose invariant manifolds
coincide along two separatrices (see Figure \ref{fig:Pendol}). We focus our attention on the
positive one, which can be parameterized as
\begin{equation}\label{def:separatrix}
 x_0(u)=4\arctan\left(e^u\right),\,\,\,\, y_0(u)=\dot x_0(u)=\frac{2}{\cosh u},
\end{equation}
whose singularities are at $u=i\pi/2+ik\pi$, $k\in\ZZ$.

The goal of this paper is to measure how this separatrix splits when one
takes $\mu>0$, paying special attention on how this splitting depends on the relative size
between $\al$ and $\eps$. 
The previous results in the periodic case \cite{Treshev97,OliveSS03, Olive06} consider $\al=0$. They  obtain an asymptotic formula for the distance between the invariant manifolds, which is  of the form
\[
 d\sim \mu\eps^{\eta-2}e^{-\dps\tfrac{\pi}{2\eps}}.
\]
In this paper, we see that in the periodic case
\begin{itemize}
 \item If $\al\leq\eps^2$, that is when the strip of analyticity is of order $2\ln(1/\eps)$ or bigger, the distance between the invariant manifolds coincides with the results obtained in the case $\al=0$, when the perturbation is a trigonometric polynomial.
\item If $\eps^2\ll\al$ and $1-\al\gg \eps^2$, that is when the strip of analyticity is between $2\ln(1/\eps)$ and $\eps$, the distance is exponentially small but the first order is given by a completely different formula 
\[
 d\sim\mu\eps^qe^{-\dps\tfrac{c}{\eps}},
\]
where $q\in\RR$ and $0<c<\pi/2$ are constants which depend on $\al$. Therefore, the splitting is bigger than it was in the polynomial case. Moreover, we see that it monotonously increases with $\al$.
\item Finally, if $0<1-\al\geq\eps^2$, that is when the strip of analyticity is narrower than $\eps$, the distance between the invariant manifolds is non-exponentially small,
\[
 d\sim\mu\eps^q,
\]
where $q\in\RR$ is a constant which depend on $\al$.
\end{itemize}
In particular, the second statement shows that even if $\al$ is a small parameter, our study prevents of using the classical approach in perturbation theory. It consists in expanding the perturbation term in powers of $\al$ and studying the splitting of the first order, which has a polynomial perturbation. We will see that even for $\al\sim\eps$, this approach  leads to a wrong result for the splitting. Therefore, we show that  the study of the splitting of separatrices for meromorphic perturbations cannot be reduced to the study of simplified polynomial models. In the quasiperiodic case, an analogous phenomenon happens.

The structure of the paper goes as follows. First in Section \ref{sec:Melnikov}
we give the main results considering the periodic case. In Proposition \ref{prop:Melnikov} we show the
behavior of the Melnikov function with respect to $\al$. In Corollary \ref{coro:NonExpSmall} we analyze the range of the parameter $\al$ for which the Melnikov function is not exponentially small. Then, in Theorems \ref{th:Main} and \ref{th:Main:NonExp},  we show for which range of parameters
$\eps$, $\al$ and $\eta$, the Melnikov function predicts correctly the splitting.

In Section \ref{sec:QP} we consider the quasiperiodic case. First in Proposition \ref{prop:Melnikov:QP}    we study the size of the Melnikov function and in Theorem \ref{th:Main:QP} we prove its validity. As in the periodic case, we also show that if $1-\al\geq\eps^2$ both the Melnikov function and the splitting of separatrices are not exponentially small. These results are given in Corollary \ref{coro:NonExpSmall:QP} and Theorem \ref{th:Main:NonExp:QP} respectively.

Section \ref{sec:SketchProof} is  devoted to
prove Theorems \ref{th:Main} and \ref{th:Main:NonExp}, and Section \ref{sec:SketchProof:QP} is devoted to prove Theorems \ref{th:Main:QP} and \ref{th:Main:NonExp:QP}. 

Finally, in Appendix \ref{sec:SingularCase} we give some heuristic ideas
about how to deal with the so-called singular  periodic case, namely, when the parameter $\eta$ reaches a certain limiting value and therefore the Melnikov function does not predict correctly the size of the splitting.

\subsection{Heuristics on the relation between regularity and Arnol'd diffusion}

Even if in this paper we study the so called isochronous case, where the frequency of the perturbation is fixed, the same kind of study would
apply to an anisochronous case (see \cite{Sauzin01}). In the anisochronous case one can encounter both situations, the case of  rationally dependent frequencies, which leads  to a periodic in time perturbation, and the case of rationally independent frequencies, which  leads to a quasiperiodic perturbation.

Anisochronous systems have attracted a lot of attention because they are good models to study the phenomenon called Arnol'd diffusion.
The name  comes from the fact that was V. Arnol'd who produced in 1964 \cite{Arnold64} the first example showing a possible mechanism that leads to global instabilities in nearly  integrable Hamiltonian Systems.

Arnol'd proved the presence of instabilities in the following particular model
\[
H(I_1,I_2,\varphi_1,\varphi_2,t)=\frac{I_1^2}{2}+\frac{I_2^2}{2}+ \eps (\cos \varphi_1-1)+ \mu \eps (\cos \varphi_1-1) (\sin \varphi_2+ \sin t),
\]
showing the existence of orbits whose action $I_2$ changes drastically.

Nevertheless, it is expected that instabilities exist in fairly general nearly integrable Hamiltonian Systems.   Chierchia and Gallavotti in \cite{ChierchiaG94} proposed the study of the following generalization of the Arnol'd model
\begin{equation}\label{def:GeneralizedArnold}
H(I_1,I_2,\varphi_1,\varphi_2,t)=\frac{I_1^2}{2}+\frac{I_2^2}{2}+ \eps (\cos \varphi_1-1)+ \mu \eps h(\varphi_1,\varphi_2, t;\eps).
\end{equation}
In this setting, they coined the terminology \emph{a priori stable} versus \emph{a priori unstable}. A priori stable refers to consider $\mu =\eps^\eta$ with $\eta\geq 0$ and  a priori unstable refers to  $\eps=1$ and $\mu$ small. In the a priori stable case the unperturbed system, $\eps=0$ is completely integrable in the sense that it is written in global action-angle variables. In the a priori unstable case the unperturbed system, $\mu=0$, even if it is integrable in the sense that it has conserved quantities, presents some hyperbolicity, namely has partially hyperbolic tori with homoclinic trajectories.

There have been some recent works proving the existence of instabilities for a priori unstable systems using the ideas proposed by V. Arnol'd (see \cite{DelshamsLS06a, DelshamsH09} and see \cite{ChengY04,Treschev04, Bernard08} for proofs using other methods).
One of the main steps in the proof is to  see that the stable and unstable manifolds of the partially hyperbolic tori, which coincide when $\mu=0$, split producing chains of heteroclinic orbits. To detect the splitting of these invariant manifolds becomes an essential step in these geometric methods.
The main reason that makes  very difficult to detect this splitting when $\mu$ and $\eps$ are small is that one expects this splitting to be exponentially small in $\eps$ and, thus, very difficult to study.  This was the origin of the distinction between a priori stable and unstable systems,
Arnol'd overcame this difficulty taking the parameter $\mu$ exponentially small in $\eps$. It is an open problem to see whether Arnol'd mechanism works in the a priori stable setting, namely taking $\mu=\eps^\eta$ with $\eta\geq 0$.

Nonetheless, the regularity of the system plays a crucial role in the classification between a priori stable and unstable systems.
The first observation, that is commonly accepted, is that in the $\CCC^r$ case, even in the a priori stable setting, the splitting is not exponentially small. Nevertheless, as far as the authors know, the only proof of this fact is the paper \cite{DelshamsGJS99} where it is seen that for quasiperiodic $\CCC^r$ perturbations  the splitting is polynomially small in $\eps$. Therefore the distinction between a priori stable and  unstable systems regarding the splitting problem only has sense for analytic perturbations. Moreover, the results stated in Theorems \ref{th:Main:NonExp} and \ref{th:Main:NonExp:QP} show that for analytic perturbations with narrow strip of analyticity, the size of the splitting is not exponentially small. In particular, we see that the splitting is polynomial in $\eps$ in both the periodic and the quasiperiodic case. Therefore, one would expect that for  anisochronous systems \eqref{def:GeneralizedArnold} with a narrow strip of analyticity the splitting at the resonances  and in the nonresonant zones are of the same order and polynomial with respect to $\eps$.

On the other hand, it is a commonly accepted fact that the bigger the size of the splitting the faster the diffusion.  In fact, Nekhroshev type lower bounds  of the diffusion time (or upper bounds of the stability time) increase with the regularity of the system \cite{Nekhoroshev77,MarcoS02, Bounemoura10} and this agrees with the fact that the splitting decreases with the regularity.
For analytic Hamiltonians, these bounds are bigger for bigger strips of analyticity \cite{Poschel93,DelshamsG96a}. The results in this paper show that this is also consistent with the fact that the splitting decreases when the strip of analyticity increases.

\section{The Melnikov function and its
validity in the periodic case}\label{sec:Melnikov}

As we want to deal with a time periodic perturbation  of the pendulum equation, the dynamics is better understood in the three-dimensional extended phase space $(x,y,\tau)\in \TT\times\RR\times\TT$. In this space, $\Lambda=\{(0,0,\tau);\tau\in\TT\}$ is a hyperbolic periodic orbit and, for $\mu=0$,  its 2-dimensional stable and unstable invariant manifolds coincide along the homoclinic manifold 
\[
 \WW^u(\Lambda)=\WW^s(\Lambda)=\left\{(x,y,\tau): H_0(x,y)=0\right\}=\left\{(x,y,\tau)=(x_0(u),y_0(u),\tau); (u,\tau)\in\RR\times\TT\right\},
\]
where $(x_0(u),y_0(u))$ is the parameterization of the separatrix given in \eqref{def:separatrix}.

Our goal is to study how these manifolds split when $\mu\neq 0$. To this end we need to introduce some notion of distance between them. As the manifolds are graphs for $\mu=0$, the same happens for $\mu\eps^\eta>0$ small enough  in suitable domains. More concretely, in Section \ref{sec:SketchProof} we will see that one can parameterize the perturbed stable and unstable manifolds as
\[\left\{
 \begin{split}
  x&=x_0(u)\\
y&=y^{u,s}(u,\tau)\\
\tau&=\tau.
 \end{split}
\right. 
\]
Here, $(u,\tau)\in (-\infty, U)\times\TT$, for certain $U>0$, for the unstable manifold and $(u,\tau)\in (-U,\infty)\times\TT$ for the stable one. Taking into account that the invariant manifolds are Lagrangian, in Section \ref{sec:SketchProof} 
we use the Hamilton-Jacobi equation to see that the functions $y^{u,s}$ can be given as 
\[
 y^{u,s}(u,\tau)=\frac{1}{y_0(u)}\pa_u T^{u,s}(u,\tau),
\]
for certain generating functions $T^{u,s}$.

Therefore, a natural way to measure the difference between the manifolds is to compute 
\[
 D(u,\tau)=\pa_u T^{s}(u,\tau)-\pa_u T^{u}(u,\tau).
\]
If one considers a perturbative approach taking $\mu$ as small parameter, one can easily see that the first order in $\mu$ of the function $D(u,\tau)$ is given by the Melnikov function
\begin{equation}\label{def:Melnikov0}
\MM\left(u,\tau\right)=\int_{-\infty}^{+\infty} \left\{H_0,H_1\right\}\left(x_0(u+s),y_0(u+s),\tau+\frac{s}{\eps}\right)ds.
\end{equation}
In other words, one has that 
\begin{equation}\label{def:PrimerOrdreMu}
 D(u,\tau)=\mu\eps^\eta \MM(u,\tau)+\OO\left(\mu^2\eps^{2\eta}\right).
\end{equation}
To see that the manifolds split, we can choose a transversal section to the unperturbed separatrix to measure their distance. The simplest one is $x=\pi$, which corresponds to compute $D(0,\tau)$ (see \eqref{def:separatrix}). Then, the zeros of $D(0,\tau)$ correspond to homoclinic orbits of the perturbed system and the distance between the invariant manifolds is given by
\begin{equation}\label{def:distancia}
 d(\tau)=y^s(0,\tau)-y^u(0,\tau)=\frac{1}{2}D(0,\tau).
\end{equation}
Nevertheless, when $\al$ is close to 1, namely $\al=1-C\eps^r$ with $r>0$ and $C>0$, the perturbative term in \eqref{def:ToyModel:ode} has a non uniform bound for $x\in [0,2\pi]$. Indeed, its maximum (in absolute value), which is $\mu\eps^\eta/(1-\al)^2=\mu\eps^{\eta-2r}/C^2$, is reached at $x=3\pi/2$. Therefore, it is natural to expect that the invariant manifolds of the perturbed system remain $\mu\eps^\eta$-close to the unperturbed separatrix only before they reach a neighborhood of the section $x=3\pi/2$. For this reason, in this case we will measure the distance in this section, where one expects that the perturbed manifolds are closer. This section, by \eqref{def:separatrix}, corresponds to $u=\ln (1+\sqrt{2})$. Therefore, we define the  $\wt d(\tau)$, the distance between the perturbed invariant manifolds at the section $x=3\pi/2$, which is given by
\begin{equation}\label{def:distancia:moguda}
 \wt d(\tau)=y^s\left(\ln \left(1+\sqrt{2}\right),\tau\right)-y^u\left(\ln \left(1+\sqrt{2}\right),\tau\right)=2\sqrt{2}D\left(\ln \left(1+\sqrt{2}\right),\tau\right).
\end{equation}
For $\al$ close to 1, one would expect that the distance $d(\tau)$ is bigger than $\wt d(\tau)$ since the manifolds deviate from the homoclinic after they cross the section $x=3\pi/2$. Nevertheless, when $\al$ is not close to 1, both quantities are equivalent.

\subsection{The Melnikov function}
In this section we compute the Melnikov function associated to Hamiltonian \eqref{def:ToyModel:Hamiltonian} with $f(\tau)=\sin\tau$ and we study its dependence on $\eps$ and $\al$. Let us recall that the Melnikov function  \eqref{def:Melnikov0}  is given by 
\begin{equation}\label{def:Melnikov}
\MM\left(u,\tau;\eps,\al\right)=4\int_{-\infty}^{+\infty} 
\frac{\sinh(u+s)\cosh(u+s)}{\left(\cosh^2(u+s)-2\alpha\sinh(u+s)\right)^2}
\sin\left(\tau+\frac{s}{\eps}\right)ds.
\end{equation}
As it is well known, the evaluation of this
integral can be done using Residuum Theory. To this end, one has to look for
the  poles of the function 
\begin{equation}\label{def:MelnikovIntegrand}
 \beta(u)= \frac{\sinh(u)\cosh(u)}{\left(\cosh^2(u)-2\alpha\sinh(u)\right)^2}
\end{equation}
closest to the real axis. For $\alpha=0$, $\beta(u)$ has order 3 poles
at $\rr_\pm^0 =\pm i\pi/2$. Nevertheless, for $\alpha>0$ these poles bifurcate
into a combination of zeros and poles. Since the size of the Melnikov function
\eqref{def:Melnikov} depends on the location of these poles, one has to study
their dependence on $\alpha$. We will see that the relative size between the parameter $\al$ and the period $2\pi\eps$ of the perturbation will lead to a significantly different size of
\eqref{def:Melnikov}.

Considering the denominator of $\beta(u)$, namely $(\cosh^2(u)-2\alpha\sinh(u))^2$, one can easily see
that the poles of \eqref{def:MelnikovIntegrand} are the solutions of
\[
 \sinh u=\al\pm i\sqrt{1-\al^2},
\]
This equation has four families of solutions given by $\rr_-+2\pi ki$,
$\rr_++2\pi ki$, which are solutions of 
\[
 \sinh u=\al+ i\sqrt{1-\al^2}.
\]
and their conjugate families  $\ol \rr_-+2\pi ki$ and $\ol \rr_++2\pi ki$ for $k\in\ZZ$. The singularities
$\rr_-$, $\rr_+$, $\ol \rr_-$ and $\ol\rr_+$ are the closest to the real axis
(see Figure \ref{fig:Singularitats}) and they satisfy
\[
0<\Im \rr_-<\frac{\pi}{2}<\Im \rr_+<\pi,                                 
\]
for $\al\in (0,1)$.

\begin{figure}[h]
\begin{center}
\psfrag{x}{$i\frac{\pi}{2}$}\psfrag{xx}{$-i\frac{\pi}{2}$}
\psfrag{zzzzz}{$\RR$}\psfrag{xxx}{$\Im u=\pi$}\psfrag{xxxx}{$\Im u=\pi$}
\psfrag{z}{$\rr_-$}\psfrag{zz}{$\rr_+$}\psfrag{zzz}{$\ol\rr_-$}\psfrag{zzzz}{$\ol\rr_+$}
\includegraphics[height=6cm]{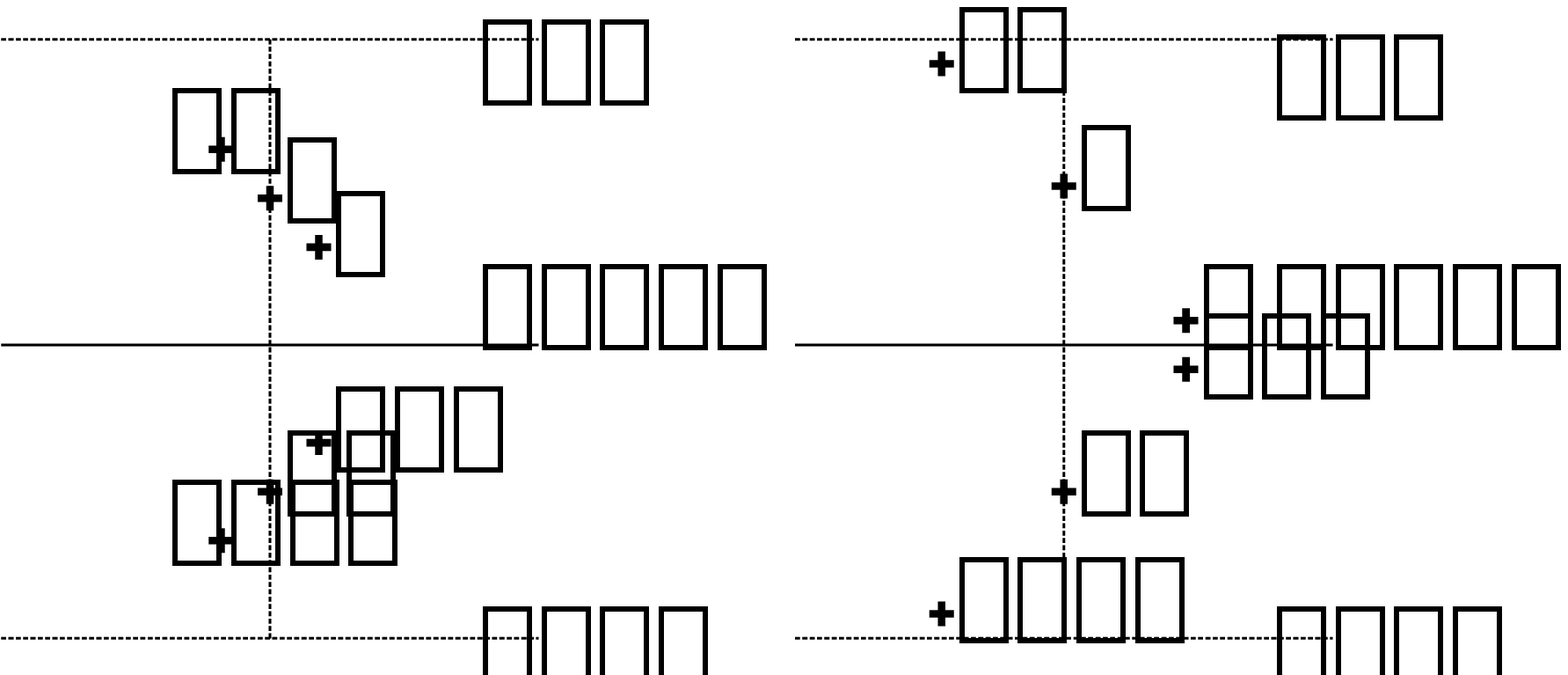}
\end{center}
\caption{\figlabel{fig:Singularitats} Location of the singularities $u=\rr_-,\rr_+,\ol\rr_-,\ol\rr_-$. In the left picture we show them for $\al$ small, in such a way that they are close to the singularities of the separatrix of the pendulum $u=\pm i\pi/2$. In the right picture we show them for $\al$ close to 1. Then, $\rr_-$ and $\ol \rr_-$ approach the real axis while $\rr_+$ and $\ol\rr_+$ approach the lines $\Im u=\pi$ and $\Im u=-\pi$ respectively.}
\end{figure}

In particular, if one considers $\al$ as a small parameter,
they have the expansions
\begin{equation}\label{eq:Expansio:Sing}
\rr_\pm(\al)=i\frac{\pi}{2}\pm (-1+i)\sqrt{\al}+\OO(\al)\,\,\text{ as }\al\rightarrow 0. 
\end{equation}
In the other limiting regime $\al\rightarrow 1$, their expansions are
\begin{equation}\label{eq:Expansio:Sing:alfa1}
\begin{split}
\rr_-(\al)&=\ln(1+\sqrt{2})+i (1-\al)^{1/2}+\OO(1-\al)\\
\rr_+(\al)&=-\ln(1+\sqrt{2})+\pi i-i (1-\al)^{1/2}+\OO(1-\al)
\end{split}\,\,\text{ as }\al\rightarrow 1.
\end{equation}
Then, one can see that the function $\beta$ for $-\pi<|\Im u|<\pi$ and $-1<\Re u<1$  behaves as 
\begin{equation}\label{def:beta:singularity}
\beta(u)\sim \frac{(u-i\pi/2)(u+i\pi/2)}{(u-\rr_-)^2(u-\rr_+)^2(u-\ol\rr_-)^2(u-\ol\rr_+)^2}.
\end{equation}
Using this fact, one can easily compute the size of the Melnikov function in
\eqref{def:Melnikov} using Residuum Theory.

\begin{proposition}\label{prop:Melnikov}
There exists $\eps_0>0$ such that $\eps\in (0,\eps_0)$ and $\alpha\in (0,1)$,
\begin{itemize}
 \item If $\al$ satisfies $0<\alpha\leq C\eps^\nu$ where $\nu>2$ and $C>0$ are
constants independent of $\eps$, the Melnikov function \eqref{def:Melnikov}
satisfies the asymptotic formula
\begin{equation}\label{def:Melnikov:BandaAmpla}
\MM\left(u,\tau;\eps,\al\right)=\frac{4\pi}{\eps^{2}}e^{\dps
-\tfrac{\pi}{2\eps}}\left(\cos(\tau-u/\eps)+\OO\left(\frac{\al}{\eps^2}, e^{\dps
-\tfrac{\pi}{2\eps}}
\right)\right).
\end{equation}
\item If $\al=\al_*\eps^2+\OO\left(\eps^3\right)$ for some constant $\al_*>0$, the Melnikov function \eqref{def:Melnikov}
satisfies the asymptotic formula
\begin{equation}\label{def:Melnikov:BandaIntermedia}
\MM\left(u,\tau;\eps,\al\right)=\frac{\la}{\eps^2}e^{\dps
-\tfrac{\pi}{2\eps}}
\sin\left(\tau-\phi_\ast-u/\eps\right)+\OO\left(\frac{1}{\eps}e^{\dps
-\tfrac{\pi}{2\eps}}\right)
\end{equation}
for certain constants $\la=\la(\al_*)$ and $\phi_*=\phi_*(\eps,\al_*)$.
\item If  $\alpha$  satisfies  $C\eps^\nu<\alpha<1$
where $\nu\in (0,2)$ and $C>0$ are constants independent of $\eps$, the Melnikov function \eqref{def:Melnikov}
satisfies the asymptotic formula
\begin{equation}\label{def:Melnikov:BandaEstreta}
\MM\left(u,\tau;\eps,\al\right)=\left|\frac{\de_2(\al)}{\eps}+\de_1(\al)\right|e^{\dps
-\tfrac{\Im\rr_-}{\eps}}
\sin\left(\tau-\phi-u/\eps\right)+\OO\left(e^{-\dps \tfrac{\pi}{2\eps}}\right).
\end{equation}
where $\de_i(\alpha)$ are given by
\begin{align}
 \de_1(\al)&=\frac{2\pi }{\left(1-\al^2\right)^{3/2}}\left(\sinh \rr_--i\left(1-\al^2\right)^{1/2}\right)\label{def:delta1}\\
 \de_2(\al)&=\frac{2\pi\sinh\rr_-}{\left(1-\al^2\right)\cosh\rr_-}\label{def:delta2}
\end{align}
with $\sinh\rr_-=\al+i\sqrt{1-\al^2}$, and $\phi=\phi(\eps,\al)$.

\end{itemize}
\end{proposition}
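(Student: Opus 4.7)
The plan is to evaluate \eqref{def:Melnikov} by residue calculus applied to the meromorphic integrand $\beta$ of \eqref{def:MelnikovIntegrand}. Writing $\sin(\tau+s/\eps) = (2i)^{-1}(e^{i(\tau+s/\eps)} - e^{-i(\tau+s/\eps)})$ and using that $\beta$ is real on $\RR$,
\[
 \MM(u,\tau;\eps,\al) = 4\,\Im\!\bigl[e^{i\tau}\II(u)\bigr],\qquad \II(u) = \int_{\RR}\beta(u+s)\,e^{is/\eps}\,ds.
\]
Displacing the contour from $\Im s = 0$ to $\Im s = L$ (with $L$ chosen differently in each regime) and using the exponential decay of $\beta$ along any horizontal line avoiding its poles,
\[
 \II(u) = 2\pi i \sum_{\rho:\,0<\Im\rho<L} \text{Res}_{s=\rho - u}\bigl[\beta(u+s)\,e^{is/\eps}\bigr] \;+\; O\bigl(e^{-L/\eps}\bigr).
\]
Setting $g(u) = \cosh^2 u - 2\al\sinh u$, one has $g'(\rho_\pm) = 2i\sqrt{1-\al^2}\cosh\rho_\pm$, so $\beta(u+s)$ admits the Laurent expansion $A_\pm/(s-s_\pm)^2 + B_\pm/(s-s_\pm) + O(1)$ near $s_\pm = \rho_\pm - u$, with $A_\pm = -\sinh\rho_\pm/[4(1-\al^2)\cosh\rho_\pm]$ and a similar explicit expression for $B_\pm$ in terms of $n'(\rho_\pm) = \cosh(2\rho_\pm)$ and $g''(\rho_\pm)$. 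Hence $\text{Res}_{s=s_\pm}[\beta(u+s)e^{is/\eps}] = e^{i(\rho_\pm-u)/\eps}\bigl(iA_\pm/\eps + B_\pm\bigr)$. The symmetry $\beta(i\pi-u) = -\beta(u)$ of the integrand (since $\sinh(i\pi-u) = \sinh u$ while $\cosh(i\pi-u) = -\cosh u$) forces the exact identities $A_+ = -A_-$ and $B_+ = B_-$, which drive the cancellations below.

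The three cases are then handled by choosing $L$ appropriately. In the \emph{narrow-strip} case $\al > C\eps^\nu$, $\nu<2$, take $L=\pi/2$: only $\rho_-$ is enclosed (since $\Im\rho_+>\pi/2$), and both the $\rho_+$-contribution and the line remainder are $O(e^{-\pi/(2\eps)})$, strictly dominated by the main term of order $e^{-\Im\rho_-/\eps}$. Substituting $A_-,B_-$ and adding the conjugate residue at $\ol\rho_-$ produces \eqref{def:Melnikov:BandaEstreta}, with $\de_2(\al)/\eps$ coming from the $iA_-/\eps$ piece and $\de_1(\al)$ from $B_-$, both simplified via $\sinh\rho_- = \al+i\sqrt{1-\al^2}$. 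In the \emph{wide-strip} case $\al\le C\eps^\nu$, $\nu>2$, shift $L$ just below $\pi$ to enclose both $\rho_\pm$. Here $\rho_\pm-i\pi/2 = \pm r$ with $r = (1-i)\sqrt\al + O(\al)$ and $r/\eps\to 0$; the identities $A_+ = -A_-$, $B_+ = B_-$ collapse the two-residue sum to $e^{-iu/\eps}\bigl[(iA_-/\eps)(e^{i\rho_-/\eps}-e^{i\rho_+/\eps}) + B_-(e^{i\rho_-/\eps}+e^{i\rho_+/\eps})\bigr]$. Since $e^{i\rho_-/\eps}-e^{i\rho_+/\eps} \approx 2ir/\eps \cdot e^{-\pi/(2\eps)}$ and $4A_- r = -1$ exactly to leading order, the $A$-product reproduces the triple-pole value $(4\pi/\eps^2)\cos(\tau-u/\eps)e^{-\pi/(2\eps)}$; an explicit cancellation of leading $1/\al$ pieces in $C_\pm = n'(\rho_\pm)/g'(\rho_\pm)^2$ and $D_\pm = n(\rho_\pm)g''(\rho_\pm)/g'(\rho_\pm)^3$ shows $B_- = O(\sqrt\al)$, so the $B$-term is subdominant; the errors $O(\al/\eps^2)$ and $O(e^{-\pi/(2\eps)})$ come respectively from the next order of the $r/\eps$-expansion and from the shifted contour. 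In the \emph{intermediate} case $\al = \al_*\eps^2+O(\eps^3)$, $r/\eps\to(1-i)\sqrt{\al_*}$ is order one, so neither cancellation nor subordination applies; one keeps both residues and defines $\la(\al_*)$ and $\phi_*(\eps,\al_*)$ as the modulus and phase of the resulting sum, normalized by $e^{-\pi/(2\eps)}$, yielding \eqref{def:Melnikov:BandaIntermedia}.

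The hardest part will be the intermediate case: since $A_\pm = O(1/\sqrt\al) = O(1/\eps)$, the stated $1/\eps^2$ leading order survives only after the delicate combination of the two residues through the $r/\eps$-oscillation, and one must verify that $B_\pm = O(\sqrt\al) = O(\eps)$ so that the $B$-contribution is genuinely subdominant and does not spoil the asymptotics. A secondary technical issue is the uniformity in $\al\in(0,1)$ of the $O(e^{-L/\eps})$ line remainder, which follows from the exponential decay of $\beta$ on horizontal strips together with the fact that the pole lattice $\rho_\pm+2\pi i\ZZ$, $\ol\rho_\pm+2\pi i\ZZ$ stays bounded away from $\Im s = L$ in each regime.
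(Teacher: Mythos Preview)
Your proof is correct, and for the second and third items it coincides with the paper's: both shift the contour into the strip $0<\Im s<\pi$ and compute the residues at $\rho_\pm$ (only $\rho_-$ in the narrow-strip regime, both in the intermediate regime).

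For the first item, however, you take a genuinely different route. The paper explicitly \emph{avoids} computing the residues at $\rho_\pm$ and looking for cancellations: instead it expands the Melnikov integral as a uniformly convergent power series in $\al$,
\[
\MM=4\sum_{n\ge 0}(n+1)2^n\al^n\int_\RR\frac{\sinh^{n+1}(u+s)}{\cosh^{2n+3}(u+s)}\sin\!\left(\tau+\tfrac{s}{\eps}\right)ds,
\]
computes the $n=0$ term by the classical triple-pole residue at $i\pi/2$, and bounds the tail termwise by shifting to $\Im s=\pi/2-\eps$; the relative error $\OO(\al/\eps^2)$ then falls out immediately from the geometric bound $(K/\eps)^{2n+2}$. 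Your approach keeps the two double poles and exploits the exact symmetry $\beta(i\pi-u)=-\beta(u)$ to force $A_+=-A_-$, $B_+=B_-$, after which the residue sum collapses and the triple-pole answer re-emerges from the relation $4A_-r=-1+O(\al)$. The key nontrivial step in your version is the cancellation inside $B_-$: in fact one can check exactly that
\[
B_-=\frac{n'(\rho_-)}{g'(\rho_-)^2}-\frac{n(\rho_-)g''(\rho_-)}{g'(\rho_-)^3}=-\frac{i\al}{4(1-\al^2)^{3/2}},
\]
so $B_-=O(\al)$, even better than the $O(\sqrt\al)$ you claim, and the $B$-contribution is indeed subdominant. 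Both arguments are valid; the paper's is quicker because it sidesteps this delicate Laurent computation, while yours is more unified (residues throughout) and makes transparent \emph{how} the two double poles merge into the triple pole of the entire case. Your identification $-8\pi A_-=\delta_2(\al)$, $8\pi iB_-=\delta_1(\al)$ is also correct and shows nicely that the three regimes are governed by a single residue formula read at different scales.
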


When $\al\ll\eps^2$, the first statement of Proposition \ref{prop:Melnikov} ensures that the Melnikov function is non-degenerate. In the case $\al\sim\eps^2$, one would have to analyze the behavior of the constant $\la(\al_*)$, which appears in formula \eqref{def:Melnikov:BandaIntermedia}, to check that it does not vanish. Even if this constant is computable in our example, we will not study it, since it is not the purpose of this paper to study this particular case. We just want to remark that for general $\al_*$, the first  asymptotic order of the Melnikov function has changed from  \eqref{def:Melnikov:BandaAmpla} to \eqref{def:Melnikov:BandaIntermedia}. In fact, as can be seen in formula \eqref{def:Melnikov:BandaEstreta}, $\al\sim\eps^2$ is the transition value in which the size of the Melnikov function changes drastically. Indeed, formula \eqref{def:Melnikov:BandaEstreta} and the expansion of $\rr_-$ given in \eqref{eq:Expansio:Sing:alfa1} show that the size of the Melnikov function becomes bigger when $\al$ approaches 1. This particular behavior will be analyzed in Section \ref{sec:MelnikovNonExp}.

Next corollary, whose proof is straightforward, analyzes the behavior of formula \eqref{def:Melnikov:BandaEstreta} for $\al$ in the range $\eps^2\ll\al\leq \al_0< 1$ for any fixed $\al_0\in (0,1)$. Let us observe that $\rr_-$ has the asymptotic expansion \eqref{eq:Expansio:Sing} when $\al\rightarrow 0$. Next corollary shows that the Melnikov function is exponentially small for this range of the parameter $\al$.

\begin{corollary}\label{coro:MidaMelnikov}
Let us fix any $\al_0\in (0,1)$. Then, the Melnikov function in \eqref{def:Melnikov:BandaEstreta} has the following asymptotic formulas for $\eps^2\ll\al\leq\al_0<1$.
\begin{itemize}
 \item If $\al$ satisfies $\alpha=C\eps^\nu$ for certain $\nu\in (0,2)$ and $C>0$, 
\[
 \MM\left(u,\tau;\eps,C\eps^\nu\right)=\frac{\left|\de_2^0\right|}{C\eps^{1+\nu/2}}e^{\dps
-\tfrac{\Im\rr_-}{\eps}}
\left(\sin\left(\tau-\phi-u/\eps\right)+\OO\left(\eps^{\nu/2}\right)\right
),
\]
where 
\[
\de_2^0=-\pi(1+i).
\] 
Therefore, in this case the Melnikov function is non-degenerate since $\de_2^0\neq 0$.
 \item If $\al\in (0,\al_0]$ is independent of $\eps$, 
\[
 \MM\left(u,\tau;\eps,\al\right)=\frac{|\de_2(\al)|}{\eps}e^{\dps
-\tfrac{\Im\rr_-}{\eps}}
\left(\sin\left(\tau-\phi-u/\eps\right)+\OO\left(\eps\right)\right
).
\]
It can be checked that $\de_2(\al)\neq0$ for any $\al\in (0,1)$, and therefore, in this case the Melnikov function is also non-degenerate.
\end{itemize}
\end{corollary}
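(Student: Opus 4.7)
The plan is to take the uniform asymptotic formula \eqref{def:Melnikov:BandaEstreta} of Proposition \ref{prop:Melnikov} as a black box and simply specialize the prefactor $|\de_2(\al)/\eps+\de_1(\al)|$ to each of the two sub-regimes. All the work reduces to expanding the explicit quantities $\de_1(\al)$ and $\de_2(\al)$ given by \eqref{def:delta1}--\eqref{def:delta2} for small $\al$, using the expansion of the singularity provided by \eqref{eq:Expansio:Sing}.

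The first step is the expansion of $\cosh\rr_-$ and $\sinh\rr_-$. From \eqref{eq:Expansio:Sing} one reads $\rr_-(\al)=i\pi/2+(1-i)\sqrt{\al}+\OO(\al)$ (consistent with $\Im\rr_-<\pi/2$), and combining this with the identity $\cosh(i\pi/2+z)=i\sinh z$ gives
\begin{equation*}
\sinh\rr_-=i+\al+\OO(\al^{3/2}),\qquad \cosh\rr_-=(1+i)\sqrt{\al}+\OO(\al),
\end{equation*}
the branch of $\cosh\rr_-$ being fixed by continuity from $\cosh(i\pi/2)=0$. Substituting these into \eqref{def:delta1} and using the trivial identity $\sinh\rr_--i\sqrt{1-\al^2}=\al$ yields $\de_1(\al)=2\pi\al/(1-\al^2)^{3/2}=\OO(\al)$, while \eqref{def:delta2} gives $\de_2(\al)=\de_2^0/\sqrt{\al}\,(1+\OO(\sqrt{\al}))$ with $|\de_2^0|=\pi\sqrt{2}\neq 0$.

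With these expansions in hand, both statements of the corollary follow by direct substitution. For $\al=C\eps^\nu$ with $\nu\in(0,2)$ one has $\de_2(\al)/\eps=\OO(\eps^{-1-\nu/2})$ and $\de_1(\al)=\OO(\eps^\nu)$; the first term dominates with a relative error $\OO(\eps^{\nu/2})$ coming from the sub-leading correction to $\de_2(\al)$, while the contribution of $\de_1$ is $\OO(\eps^{1+3\nu/2})$, strictly smaller. The additive remainder $\OO(e^{-\pi/(2\eps)})$ in \eqref{def:Melnikov:BandaEstreta} is negligible since $\Im\rr_-<\pi/2$ as soon as $\al>0$. For the second statement, $\al\in(0,\al_0]$ is fixed and $\eps$-independent, so both $\de_1(\al)$ and $\de_2(\al)$ are constants in $\eps$; the ratio $|\de_1(\al)|/|\de_2(\al)/\eps|$ is $\OO(\eps)$, producing the claimed error. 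The non-vanishing of $\de_2(\al)$ on $(0,1)$ is read off \eqref{def:delta2} directly, because $\sinh\rr_-=\al+i\sqrt{1-\al^2}$ has nonzero imaginary part whenever $\al<1$.

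There is no genuine obstacle; the only delicate point is tracking the correct branch of $\cosh\rr_-$ at leading order, which is pinned down by continuity in $\al$ from the unperturbed value $\rr_-(0)=i\pi/2$ where $\cosh$ vanishes to first order.
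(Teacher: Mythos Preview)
Your proof is correct and matches the paper's approach: the paper simply states that the corollary's proof ``is straightforward'' from Proposition~\ref{prop:Melnikov} and the expansion \eqref{eq:Expansio:Sing}, and what you have written is exactly the detailed execution of that straightforward expansion of $\de_1(\al)$ and $\de_2(\al)$ for small $\al$.
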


\begin{remark}
If one takes $\al\sim\eps^\nu$ and $\nu\rightarrow 2$ in formula \eqref{def:Melnikov:BandaEstreta}
does not obtain formula \eqref{def:Melnikov:BandaIntermedia}. The reason is that  formula \eqref{def:Melnikov:BandaEstreta} only takes into account the
residuum of $\rr_-$ since the residuum of $\rr_+$ is exponentially small with
respect to the one of $\rr_-$. Nevertheless, this is not the case when
$\al\sim\eps^2$ (see \eqref{eq:Expansio:Sing}). Then, the residuums of both singularities $\rr_-$ and
$\rr_+$ make a contribution to the Melnikov function of the same exponentially small order $\OO(\eps^{-2}e^{-\pi/2\eps})$.
\end{remark}

\begin{remark}In the first statement of Proposition \ref{prop:Melnikov}, one can see that, when the
strip of analyticity \eqref{def:Strip} is \emph{wide enough}, namely taking $\al\sim\eps^\nu$ with
$\nu>2$, the Melnikov function at first order behaves as in the entire case
$\alpha=0$ \cite{Gelfreich97a}. In other words, the Melnikov function does not realize the finiteness
of the strip of analyticity  and the exponentially small coefficient is given by
the imaginary part of the singularities of the separatrix. 

On the other hand, when the strip of analyticity \eqref{def:Strip} is independent of $\eps$
or not extremely big with respect to $\eps$, namely taking $\alpha\sim\eps^\nu$
with $\nu\in [0,2)$, the exponentially small coefficient does not coincide with
the imaginary part of the singularity of the unperturbed separatrix. Instead it
is given by the imaginary part of this new singularity $\rr_-$, which appears
when one evaluates the perturbation along the unperturbed separatrix. Note
that even if one takes, for instance, $\al=\eps$, the strip of analyticity is of order $\ln(1/\eps)$ and one has that 
\[
\MM\left(u,\tau;\eps,\eps\right)\sim
\eps^{-\frac{3}{2}}e^{-\dps\tfrac{\pi-2\sqrt{\eps}}{2\eps}}.
\]
That is, even for perturbations with a wide strip of analyticity with respect to
$\eps$, it appears a correcting term in the exponential.

The case $\al\sim\eps^2$ is the boundary between these two
different behaviors. In this case, the exponential coefficient is given by the
imaginary part of the singularity of the unperturbed separatrix but the constant
in front of the exponential is not the one given in the first statement of
Proposition \ref{prop:Melnikov} but a different one, which is given in the second statement for the value  $\al=\al_*\eps^2+\OO(\eps^3)$.
\end{remark}

\begin{proof}[Proof of Proposition \ref{prop:Melnikov}]
The classical way to compute the Melnikov function is to change the path of integration to $\Im s=\pm\pi$ and apply Residuum Theory, using that the function $\beta$ has order two poles at $\rr_-$, $\rr_+$, $\ol\rr_-$ and $\ol\rr_+$ in the strip $-\pi<\Im s<\pi$. Nevertheless, to prove the first statement, instead of computing the residuums of both
$\rr_\pm$ and $\ol\rr_\pm$ and look for the cancellations, we just expand the Melnikov integral in power series of $\al$,
since it is uniformly convergent as a real integral. Thus, we obtain,
\[
 \MM\left(u,\tau;\eps,\al\right)=4\sum_{n=0}^\infty
(n+1)2^n\al^n\int_{-\infty}^{+\infty}
\frac{\sinh^{n+1}(u+s)}{\cosh^{2n+3}(u+s)}\sin\left(\tau+\frac{s}{\eps}
\right)ds.
\]
Then, the term for $n=0$ gives the first asymptotic order and can be computed using again Residuum Theory. To bound the other terms goes as follows. We Fourier-expand the terms in the series of the Melnikov integral in $\tau$ and we change the path of integration to $\Im s=\pi/2-\eps$ or $\Im s=-(\pi/2-\eps)$ depending on the harmonic. This gives us the exponentially small term $e^{-\frac{\pi}{2\eps}}$. To bound the other terms in the integral, we use that 
\[
 \left|\frac{\sinh^{n+1}(u+s)}{\cosh^{2n+1}(u+s)}\right|\leq \left(\frac{K}{\eps}\right)^{2n+1},
\]
for certain constant $K>0$ independent of $\eps$. Moreover using that 
\[
 \frac{1}{\left|\cosh^2(u+s)\right|}
\]
decays exponentially as $\Re s\rightarrow \pm\infty$ and that it has poles of order two $u+s=\pm i\pi/2$, one can see that
\[
 \int_{-\infty}^\infty \frac{1}{\left|\cosh^2(u+s\pm(i\pi/2-\eps))\right|}ds\leq \frac{K}{\eps}.
\]
Then, one has that 
\[
 \left|\int_{-\infty}^{+\infty}
\frac{\sinh^{n+1}(u+s)}{\cosh^{2n+3}(u+s)}\sin\left(\tau+\frac{s}{\eps}
\right)ds\right|\leq \left(\frac{K}{\eps}\right)^{2n+2}e^{-\dps\tfrac{\pi}{2\eps}}.
\]
Therefore, the remainder can be easily bounded provided $0<\al\ll\eps^2$. 

When $\al\sim\eps^2$ we cannot ensure that the preceding series is convergent and therefore we use directly Residuum Theory in the strip $0<\Im s<\pi$ or $-\pi<\Im s<0$ depending on the harmonic. For instance, for the positive harmonic, we take the strip $0<\Im s<\pi$, which contains the singularities $\rr_-$ and $\rr_+$, that are at a
distance of order $\OO(\eps)$ from $i\pi/2$ (see \eqref{eq:Expansio:Sing}). Then, it is enough to compute the residuum of $\beta(s)e^{is/\eps}$ at these singularities, which using \eqref{def:beta:singularity} satisfies
\[
 \mathrm{Res}\left(\beta(s)e^{is/\eps}, s=\rr_\pm\right)=\frac{A_\pm}{\eps^2}e^{-\dps\tfrac{\pi}{2\eps}}+\OO\left(\frac{1}{\eps}e^{-\dps\tfrac{\pi}{2\eps}}\right).
\]
for certain computable constants $A_\pm\in \CC$ independent of $\eps$.

In the case $\al\sim\eps^\nu$ with $0\leq \nu< 2$, we have that $\pi/2-\Im \rr_-, \Im \rr_+-\pi/2\sim \eps^{\nu/2} \gg \eps$. Therefore, 
\[
 e^{-\dps\tfrac{\Im \rr_+}{\eps}}\ll e^{-\dps\tfrac{\pi}{2\eps}}\ll e^{-\dps\tfrac{\Im \rr_-}{\eps}}.
\]
Using these facts, we can compute the Melnikov integral changing the path up to  $\Im s=\pi/2$ and we just need to consider the residuum at the singularity $\rr_-$, which can be explicitly computed. 
\end{proof}

\begin{remark}\label{remark:Singularitats}
All the singularities of the function $\beta(u)$ in
\eqref{def:MelnikovIntegrand} have different imaginary part for any $\al\in
(0,1)$. This is one of the reasons of the choice of the perturbation in
\eqref{def:ToyModel:ode}, since this is not always the case. Let us consider, for instance, the pendulum with a
different perturbation as
\begin{equation}\label{def:ToyModel2}
\ddot x= \sin x+\mu\eps^\eta \frac{\sin x}{(1-\al\cos x)^2}\sin\frac{t}{\eps},
\end{equation}
which has Hamiltonian function
\[
 \wt H\left(x,y,\frac{t}{\eps}\right)=\frac{y^2}{2}+\cos
x-1+\mu\eps^\eta\frac{1}{\al(1-\al\cos x)}\sin\frac{t}{\eps}.
\]
Then, the Melnikov function is given by
\[
 \wt
\MM\left(u,\frac{t}{\eps}\right)=4\int_{-\infty}^{+\infty}\frac{
\cosh(u+s)\sinh(u+s)}{\left((1-\al)\cosh^2(u+s)
+2\al\right)^2}\sin\left(\frac{t+s}{\eps}\right)
\]
To compute its size, one has to study the singularities of 
\[
 \wt\beta(u)=\frac{\cosh u\sinh u}{\left((1-\al)\cosh^2u +2\al\right)^2}.
\]
The closest ones to the real axis with positive
imaginary part are 
\[
 \wt \rr_\pm= i\frac{\pi}{2}\pm\mathrm{arcsinh}\sqrt{\frac{2\al}{1-\al}}
\]
and therefore both have the same imaginary part for any $\al\in (0,1)$. In these cases, the Melnikov function, which is given by the residuums of both
singularities, has the same size for any $\al$ satisfying $0\leq\al\leq \al_0<1$ for any $\al_0$ independent of $\eps$, but is divergent for $\al=1$.
\end{remark} 
\subsubsection{Narrow strip of analyticity: a drastic change on the size of the Melnikov function}\label{sec:MelnikovNonExp}

In Corollary \ref{coro:MidaMelnikov} we have seen that the Melnikov function is exponentially small with respect to $\eps$ provided $0<\al\leq \al_0< 1$ for any fixed $\al_0\in (0,1)$. We devote this section to study this function when $\al$ is close to 1, which will lead to a Melnikov function which is either exponentially small with a different exponential dependence on $\eps$ or even to a non-exponentially small Melnikov function.

We consider $\al=1-C\eps^r$ with $r>0$ and $C>0$. In this setting, one can see that the analyticity strip \eqref{def:Strip} of the Hamiltonian system \eqref{def:sistema} is very small, of order $\OO(\eps^{r/2})$. As a consequence, the singularity $\rr_-(\al)$ is very close to the real line. Indeed, from \eqref{eq:Expansio:Sing:alfa1}, one has
\[
\Im\rr_-(\al)=C^{1/2}\eps^{r/2}+\OO\left(\eps^r\right).
\]
\begin{corollary}\label{coro:NonExpSmall}
If $\al=1-C\eps^r$ with $r>0$ and $C>0$, the Melnikov function in \eqref{def:Melnikov:BandaEstreta} has the following asymptotic formulas
\begin{itemize}
\item If $0<r<2$
\[
 \MM\left(u,\tau;\eps,1-C\eps^r\right)=\frac{\pi}{\sqrt{2}C\eps^{1+r}}e^{\dps
-\tfrac{\Im\rr_-}{\eps}}
\left(\sin\left(\tau-\phi-u/\eps\right)+\OO\left(\eps^{r/2},\eps^{1-r/2}\right)\right
).
\]
\item If $r=2$,
\[
 \MM\left(u,\tau;\eps, 1-C\eps^2\right)=\frac{\pi  e^{-\sqrt{C}}}{\sqrt{2}C^{3/2}\eps^{3}}
\left(\sin\left(\tau-\phi-u/\eps\right)+\OO(\eps)\right),
\]
\item If $r>2$,
\[
 \MM\left(u,\tau;\eps, 1-C\eps^r\right)=\frac{\pi }{\sqrt{2}C^{3/2}\eps^{3r/2}}
\left(\sin\left(\tau-\phi-u/\eps\right)+\OO\left(\eps^{r/2-1}\right)\right),
\]
\end{itemize}
where $\phi=\phi(\eps,\al)$ is the constant given in Proposition \ref{prop:Melnikov}.

Therefore, in all these cases the Melnikov function is non-degenerate. Moreover, in the last two cases, it is non-exponentially small.
\end{corollary}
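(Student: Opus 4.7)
The plan is to substitute the $\al\to 1$ asymptotics into formula \eqref{def:Melnikov:BandaEstreta} of Proposition \ref{prop:Melnikov}, which applies since $\al=1-C\eps^r$ satisfies $C'\eps^\nu<\al<1$ for any fixed $\nu\in(0,2)$ once $\eps$ is small. Because the exponential factor $e^{-\Im\rr_-/\eps}$ is kept unevaluated in the statement, the proof reduces to extracting the leading order of the amplitude $|\de_2(\al)/\eps+\de_1(\al)|$ in each of the three regimes, plus the observation that the tail $\OO(e^{-\pi/(2\eps)})$ from that formula is dominated by the main term.

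First I would record three algebraic facts. From \eqref{eq:Expansio:Sing:alfa1}, $\Im\rr_-=\sqrt{C}\,\eps^{r/2}+\OO(\eps^r)$, so $e^{-\Im\rr_-/\eps}$ is exponentially small for $r<2$, equals the constant $e^{-\sqrt{C}}$ at $r=2$, and tends to $1$ for $r>2$. The identity $\sinh\rr_-=\al+i\sqrt{1-\al^2}$, built into the definition of $\rr_-$, collapses $\de_1$ to the real expression
\[
\de_1(\al)=\frac{2\pi\al}{(1-\al^2)^{3/2}}=\frac{\pi}{\sqrt{2}\,C^{3/2}\eps^{3r/2}}\bigl(1+\OO(\eps^r)\bigr).
\]
Together with the identity $\cosh^2\rr_-=2\al\sinh\rr_-$, which gives $\cosh\rr_-\to\sqrt{2}$ as $\al\to 1$,
\[
\de_2(\al)=\frac{2\pi\sinh\rr_-}{(1-\al^2)\cosh\rr_-}=\frac{\pi}{\sqrt{2}\,C\eps^r}\bigl(1+\OO(\eps^{r/2})\bigr).
\]

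Next I would compare the two contributions through $(\de_2/\eps)/\de_1=\sqrt{C}\,\eps^{r/2-1}(1+\OO(\eps^{r/2}))$. When $0<r<2$ the ratio diverges, so $\de_2/\eps$ is dominant and the amplitude is $\pi/(\sqrt{2}\,C\eps^{1+r})$; the relative error splits into $\OO(\eps^{r/2})$ from the intrinsic corrections to $\de_2$ and $\OO(\eps^{1-r/2})$ from the discarded $\de_1$. When $r>2$ the ratio vanishes, so $\de_1$ leads and the amplitude is $\pi/(\sqrt{2}\,C^{3/2}\eps^{3r/2})$; the relative error $\OO(\eps^{r/2-1})$ from the discarded $\de_2/\eps$ dominates the $\OO(\eps^r)$ intrinsic correction to $\de_1$. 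At the transition $r=2$ both contributions are of order $\eps^{-3}$; they add coherently because both leading real parts are positive, and combining with the subleading corrections in $\sinh\rr_-$, $\cosh\rr_-$ and $1-\al^2$ produces the claimed formula with relative error $\OO(\eps)$.

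Finally, since $\Im\rr_-(\al)<\pi/2$ strictly for $\al\in(0,1)$, the tail $\OO(e^{-\pi/(2\eps)})$ in \eqref{def:Melnikov:BandaEstreta} is always exponentially smaller than the main term and is absorbed by the stated error. Non-degeneracy and, in the last two cases, non-exponential smallness of the Melnikov function are immediate from the explicit constants, and the phase $\phi(\eps,\al)$ is inherited unchanged from Proposition \ref{prop:Melnikov}. There is no genuinely hard step; the only item requiring care is the bookkeeping of the two distinct sources of relative error, namely higher-order corrections to a single term versus the subdominant term, which is what guarantees the stated error estimates are correct.
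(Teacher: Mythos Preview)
Your approach is correct and is exactly the (implicit) argument the paper has in mind: the corollary is stated without proof, and the intended derivation is precisely to substitute the $\al\to 1$ asymptotics of $\rr_-$, $\de_1$, $\de_2$ from \eqref{eq:Expansio:Sing:alfa1}, \eqref{def:delta1}, \eqref{def:delta2} into formula \eqref{def:Melnikov:BandaEstreta}. Your identification of the two competing error sources (intrinsic corrections versus the subdominant term) is the right bookkeeping, and your treatment of the exponential factor and the tail $\OO(e^{-\pi/(2\eps)})$ is sound.

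One small remark: in the transition case $r=2$ you assert that the combination ``produces the claimed formula'' without carrying out the addition. If you do it explicitly you will find $|\de_2/\eps+\de_1|\sim \tfrac{\pi(\sqrt{C}+1)}{\sqrt{2}\,C^{3/2}\eps^{3}}$ at leading order, which differs from the constant printed in the statement by a factor $(\sqrt{C}+1)$. This does not affect the method, the order $\eps^{-3}$, the non-degeneracy, or the $\OO(\eps)$ relative error; it only concerns the precise numerical prefactor. Everything else in your proposal matches the paper's reasoning.
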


\begin{remark}
 To illustrate which size has the Melnikov function for the range of $\al$ considered in the first statement of Corollary \ref{coro:NonExpSmall}, we can take $\al=1-\eps$. Then, using the expansion \eqref{eq:Expansio:Sing:alfa1}, one can see that
\[
\MM\left(u,\tau;\eps,1-\eps\right)\sim
\eps^{-2}e^{-\dps\tfrac{1}{\sqrt{\eps}}}.
\]
Namely, the Melnikov function is still exponentially small but it has a different exponential dependence on $\eps$.
\end{remark}
\subsection{Validity of the Melnikov function}

Once we have computed the Melnikov function in Proposition \ref{prop:Melnikov} provided $0<\al\leq \al_0<1$ for a fixed $\al_0$, we can compute the prediction it
gives for the distance between the manifolds at the section $x=\pi$, which we call $d_0(\tau)$. Recall that by \eqref{def:PrimerOrdreMu} it is the first order in $\mu$ of the function  $d(\tau)$ given in \eqref{def:distancia}. 


If $\al$ satisfies $0<\al\leq C\eps^\nu$ where $\nu>2$ and $C>0$ are constants
independent of $\eps$, $d_0(\tau)$ is given by
\[
 d_0(\tau)=2\pi\mu\eps^{\eta-2}
e^{-\dps\tfrac{\pi}{2\eps}}\left(\cos\tau+\OO\left(\frac{\al}{\eps^2},e^{\dps
-\tfrac{\pi}{2\eps}}\right)\right).
\]
If $\al=\al_*\eps^2+\OO(\eps^3)$ for any constant $\al_*>0$, it is given by,
\[
 d_0(\tau)=\frac{\la(\al_*)}{2}\mu\eps^{\eta-2}e^{\dps
-\tfrac{\pi}{2\eps}}\sin(\tau-\phi_*)+\OO\left(\mu\eps^{\eta-1} e^{-\dps\tfrac{\pi}{2\eps}}
\right).
\]
Finally, if  $\al$ satisfies  $C\eps^\nu<\al\leq \al_0<1$ where
$\nu\in (0,2)$, $C>0$ and $\al_0\in(0,1)$ are constants independent of $\eps$, it is given by
\[
 d_0(\tau)=\frac{\left|\de_2(\al)\right|}{2}\mu\eps^{\eta-1} e^{\dps
-\tfrac{\Im\rr_-}{\eps}}\sin(\tau-\phi)+\OO\left(\mu\eps^{\eta} e^{\dps
-\tfrac{\Im\rr_-}{\eps}}\right).
\]
As we have already explained, direct
application of Melnikov theory only ensures that the distance between the manifolds is given
by
\[
 d(\tau)=d_0(\tau)+\OO\left(\mu^2\eps^{2\eta}\right).
\]

Therefore, the Melnikov function is the first order of the splitting
provided  $\mu$  is exponentially small with respect to $\eps$. However, it is
well known that often, even if Melnikov theory cannot be applied, the Melnikov
function is the true first order of the splitting (see \cite{HolmesMS88, DelshamsS92, Gelfreich94, DelshamsS97, Gelfreich97a, BaldomaF04, BaldomaF05, Gelfreich00, GuardiaOS10, BaldomaFGS11}). Next theorem shows that, under certain conditions, the Melnikov function gives the true first
order of the distance between the manifolds. We want to point out that the only available proof of the correct prediction of the Melnikov 
function for meromorphic perturbations is \cite{Gelfreich97a}, in which the author considers systems with analyticity strip wide enough with respect to $\eps$. All the other references deal with polynomial perturbations. Thus, this theorem is, as far as the authors know, the first one which shows the dependence of the size of the splitting on the width of the analyticity strip.

\begin{theorem}\label{th:Main}
Let us consider any $\mu_0>0$ and $\al_0\in (0,1)$. Then, there exists $\eps_0>0$ such that for
$|\mu|<\mu_0$, $\eps\in (0,\eps_0)$ and $\al\in (0,\al_0]$ such that 
$\eps^{\eta-1}(\eps+\sqrt{\al})$ is small enough,
\begin{itemize}
 \item If $\al$ satisfies $0<\alpha\leq C\eps^\nu$ where $\nu>2$ and $C>0$ are
constants independent of $\eps$, the invariant manifolds split and their distance at the section $x=\pi$ is given by
\[
d(\tau)=2\pi\mu\eps^{\eta-2}
e^{-\dps\tfrac{\pi}{2\eps}}\left(\cos\tau+\OO\left(\frac{\al}{\eps^2},
\mu\eps^\eta\right)\right).
\]
\item If $\al=\al_\ast\eps^2+\OO(\eps^3)$ for some constant $\al_\ast>0$ and the constant $\la(\al_\ast)$ introduced in Proposition \ref{prop:Melnikov} satisfies $\la(\al_\ast)\neq 0$, the invariant manifolds split and their distance at the section $x=\pi$  is given by
\[
d(\tau)=\frac{|\la(\al_\ast)|}{2}\mu\eps^{\eta-2}
e^{-\dps\tfrac{\pi}{2\eps}}\left(\sin(\tau-\phi_*)+\OO\left(
\mu\eps^\eta\right)\right).
\]
\item If  $\alpha$ satisfies   and $C\eps^\nu<\alpha<\al_0$
where $\nu\in (0,2]$ and $C>0$ are constants independent of $\eps$, the
invariant manifolds split and their distance  at the section $x=\pi$  is given by
\[
d(\tau)=\frac{\left|\de_2(\alpha)\right|}{2}\mu\eps^{\eta-1}e^{\dps
-\tfrac{\Im\rr_-}{\eps}}\left(\sin(\tau-\phi)+\OO\left(\mu\eps^{\eta-1}\sqrt{\al},\eps\right)\right).
\]
\end{itemize}

\end{theorem}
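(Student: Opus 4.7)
The strategy follows the standard framework for exponentially small splitting via parameterization of the invariant manifolds on complex domains, combined with an inner equation analysis near the relevant complex singularities; the novelty is that one must simultaneously handle the separatrix singularities $u=\pm i\pi/2$ and the new singularities $\rr_\pm,\ol\rr_\pm$ introduced by the meromorphic perturbation $\psi$. First, I would parameterize $\WW^{u,s}(\Lambda)$ as graphs $y^{u,s}(u,\tau)=\frac{1}{y_0(u)}\pa_u T^{u,s}(u,\tau)$, reducing invariance to a Hamilton--Jacobi equation for $T^{u,s}$ with the appropriate asymptotic conditions as $u\to\mp\infty$. Existence on real half-lines $(-\infty,U]$ and $[-U,+\infty)$ follows from a standard contraction argument in weighted $C^0$ spaces, exploiting the hyperbolicity at the origin and the smallness factor $\mu\eps^\eta$ in the perturbation.

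The technical heart of the proof is the extension of $T^{u,s}$ to complex boomerang-shaped domains $D^{u,s}$ whose upper and lower boundaries lie at distance $\OO(\eps+\sqrt{\al})$ from the closest singularities. The scale $\sqrt{\al}$ appears because, by \eqref{eq:Expansio:Sing}, one has $|\rr_\pm-i\pi/2|\sim\sqrt{\al}$, so this is precisely the geometric threshold at which the hypothesis $\eps^{\eta-1}(\eps+\sqrt{\al})$ small enters as the smallness of the nonlinear term in the Hamilton--Jacobi equation on $D^{u,s}$. Within each regime I would introduce a rescaled inner variable centered at the dominant singularity: $u=i\pi/2+\eps z$ when $\al\ll\eps^2$, $u=\rr_-+\eps z$ when $\al\gg\eps^2$, and a coupled inner equation involving both $\rr_\pm$ in the transitional regime $\al\sim\eps^2$. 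The resulting $\eps$-independent inner equations admit solutions whose leading-order coefficients reproduce the residues computed in Proposition \ref{prop:Melnikov} and provide the constants $\la(\al_\ast)$ and $\de_2(\al)$ appearing in the statement.

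On the overlap $D^u\cap D^s$ the difference $\Delta T = T^s-T^u$ satisfies a linear homogeneous PDE and is $2\pi$-periodic in $\tau$. Fourier expansion $\Delta T(u,\tau)=\sum_k \Delta T^{[k]}(u)\,e^{ik\tau}$ converts the PDE into decoupled ODEs, and a flow-box change of variables makes each $\Delta T^{[k]}$ invariant under a shift, allowing extension to a maximal horizontal strip and yielding the bound $|\Delta T^{[k]}(u)|\lesssim e^{-|k|\sigma/\eps}$ on $u\in\RR$ with $\sigma=\Im\rr_-$ or $\pi/2$ according to the regime. Only the harmonics $k=\pm1$ contribute at leading order, and matching with the inner solution gives $\Delta T^{[\pm1]}=\mu\eps^\eta\MM^{[\pm1]}+R^{[\pm1]}$ with remainder smaller by precisely the factors announced in the error terms, which captures the bootstrap improvement over the naive Melnikov prediction \eqref{def:PrimerOrdreMu}. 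Translating to the section $u=0$ via \eqref{def:distancia} then yields the three asymptotic formulas. The main difficulty I anticipate is the transitional regime $\al=\al_\ast\eps^2+\OO(\eps^3)$, where $\rr_-$ and $\rr_+$ both sit at distance $\OO(\eps)$ from $i\pi/2$ and the inner solution is a delicate combination of two residue contributions whose interaction produces $\la(\al_\ast)$; the narrow-strip case $\al\sim\eps^\nu$ with $\nu\in(0,2)$ is also subtle because the $\al$-dependence of the geometry of $D^{u,s}$ must be tracked uniformly through the entire fixed-point and matching procedure, which is what forces the appearance of $\sqrt{\al}$ in the error estimate.
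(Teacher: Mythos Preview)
Your overall architecture---Hamilton--Jacobi parameterization, complex extension, flow-box conjugation of the difference, Fourier/Lazutkin bound---matches the paper, but you have inserted an inner-equation / matching step that the paper does \emph{not} use for this theorem. Theorem~\ref{th:Main} covers the \emph{regular} regime, where the Melnikov function already gives the correct leading order; the paper's proof works entirely at the ``outer'' level. Concretely, the paper shows by a fixed-point argument (Theorem~\ref{th:ExistenceManifolds}) that on the full complex domain $D^{u,s}_{\kk_1}$ one has
\[
\pa_u T^{u,s}(u,\tau)=\pa_u T_0(u)+\mu\eps^\eta\MM^{u,s}(u,\tau)+\OO\!\left(\frac{|\mu|^2\eps^{2\eta-2}}{(1-\al)^2}\right),
\]
where $\MM^{u,s}$ are the half-Melnikov integrals. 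No rescaled inner variable, no $\eps$-independent limiting equation, and no matching are needed: the half-Melnikov term is simply the first iterate $\ol\FF^{u,s}(0)$ of the contraction. Inner equations appear only in Appendix~\ref{sec:SingularCase}, precisely for the \emph{singular} boundary case $\eps^{\eta-1}(\eps+\sqrt{\al})\sim 1$ where Melnikov fails. Your proposal would in principle produce the same leading order, but it is substantially more work and the sketch does not indicate how you would actually carry out the matching to obtain the stated error bounds.

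Two further points of divergence. First, the complex domains $D^{u,s}_{\kk}$ in the paper (see~\eqref{def:DominisOuter}) reach within distance $\kk\eps$ of the \emph{new} singularities $\rr_-,\ol\rr_-$, not within $\OO(\eps+\sqrt{\al})$ of $i\pi/2$; this is stressed as a key novelty relative to the polynomial-perturbation literature. Second, the factor $\sqrt{\al}$ in the hypothesis does not enter via domain geometry but through the weighted-norm estimate~\eqref{eq:Cota:Singularitat}, $|(u-i\pi/2)/(u-\rr_-)|\le 1+K\sqrt{\al}/\eps$, which controls the nonlinear term in the fixed-point iteration and yields the Lipschitz constant $K|\mu|\eps^{\eta-1}(\eps+\sqrt{\al})/(1-\al)$. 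After the existence step, the paper straightens $\wt\LL_\eps$ to $\LL_\eps$ by the change $u=v+\CCC(v,\tau)$ (Theorem~\ref{th:Canvi}) and applies the Lazutkin-type Lemma~\ref{lemma:Lazutkin} to $\Delta(v+\CCC,\tau)-L(v,\tau)$, which lies in $\ker\LL_\eps$; this is your flow-box step, and it is the whole story---no inner matching is required to close the argument.
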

The proof of this theorem is deferred to Section \ref{sec:SketchProof}.
\begin{remark}\label{remark:SingularCase}
If one considers $\al\sim \eps^\nu$ with $\nu>2$ the condition
$\eps^{\eta-1}(\eps+\sqrt{\al})$  small enough is equivalent to requiring
$\eta>0$. This condition is the same that has to be required if one considers
the polynomial case $\al=0$ (see \cite{Treshev97, BaldomaFGS11}). In other words, in the case $\nu>2$ the validity of the
Melnikov function is the same as in the polynomial case.

On the other hand, if one considers $\al\sim \eps^\nu$ with $\nu<2$, the  condition
$\eps^{\eta-1}(\eps+\sqrt{\al})$ small enough is 
\[
 \eta-1+\frac{\nu}{2}>0.
\]
In particular, if one considers $\al$ as a parameter independent of $\eps$, one
has to require $\eta>1$.


The limit cases in the previous settings, $\eta=0$ and $\eta-1+\nu/2=0$ respectively, are usually called
singular cases (see \cite{BaldomaFGS11}). In Appendix \ref{sec:SingularCase} we make some remarks about how
these cases could be studied.
\end{remark}

\subsubsection{Narrow strip of analyticity: validity of the Melnikov function}\label{sec:NonExpSmallSplitting}
In Section \ref{sec:MelnikovNonExp} we have seen how the Melnikov function changes drastically its size when $\al=1-C\eps^r$ with $r>0$, even becoming non-exponentially small if $r\geq 2$. In this section we prove that it gives the correct first order for the splitting of separatrices. Note that, in the range of parameters for which the Melnikov function is not exponentially small, one can just apply classical perturbation techniques to prove that this function gives the first order of the splitting.

First, we give the prediction of the distance given by the Melnikov function when $\al=1-C\eps^r$ with $r>0$ and $C>0$, which can be deduced from Corollary \ref{coro:NonExpSmall}. Recall that, as we have explained in Section \ref{sec:Melnikov}, in this case we study the distance between the manifolds at the section $x=3\pi/2$. We have called $\wt d(\tau)$ to this distance (see \eqref{def:distancia:moguda}) and we call $\wt d_0(\tau)$ to the Melnikov prediction of this distance.

If $r\in (0,2)$, calling $u^\ast=\ln(1+\sqrt{2})$,the Melnikov prediction  is given by 
\begin{equation}\label{def:Area:rmenor}
\wt d_0(\tau)=\mu\eps^{\eta-r-1}\frac{2\pi}{C} e^{\dps
-\tfrac{\Im\rr_-}{\eps}}\left(\sin\left(\tau-\phi-u_*/\eps\right)+\OO\left(\eps^{r/2},\eps^{1-r/2}\right)\right),
\end{equation}
whereas if  $r=2$ is given by
\begin{equation}\label{def:Area:r2}
\wt d_0(\tau)=\mu\eps^{\eta-3}\frac{2\pi e^{-\sqrt{C}}}{C^{3/2}}\left(\sin(\tau-\phi-u^*/\eps)+\OO(\eps)\right).
\end{equation}
Finally, if $r>2$ is given by
\begin{equation}\label{def:Area:rMajor2}
\wt d_0(\tau)=\mu\eps^{\eta-3r/2}\frac{2\pi }{C^{3/2}}
\left(\sin(\tau-\phi-u^*/\eps)+\OO\left(\eps^{r/2-1}\right)\right).
\end{equation}
In these cases, direct
application of Melnikov theory only ensures that the distance between the manifolds is given
by
\[
 d(\tau)=d_0(\tau)+\OO\left(\mu^2\eps^{2(\eta-2r)}\right),
\]
since the perturbation has size $\OO(\mu\eps^{\eta-2r})$. Next theorem widens the range of the validity of this prediction under certain hypotheses.

\begin{theorem}\label{th:Main:NonExp}
Let us consider any $\mu_0>0$ and let us assume $\eta>\max\{r+1,3r/2\}$. Then, there exists $\eps_0>0$ such that for
$|\mu|<\mu_0$, $\eps\in (0,\eps_0)$ and $\al=1-C\eps^r$ with $r>0$ and $C>0$, the invariant manifolds split and their distance on the section $x=3\pi/2$ is given by,
\begin{itemize}
\item If $0<r<2$,
\[
\wt d(\tau)=\mu\eps^{\eta-r-1}\frac{2\pi}{C}e^{\dps
-\tfrac{\Im\rr_-}{\eps}}\left(\sin(\tau-\phi-u^*/\eps)+\OO\left(\mu\eps^{\eta-r-1},\eps^{1-r/2},\eps^{r/2}\right)\right).
\]
 \item If $r=2$,
\[
\wt d(\tau)=\mu\eps^{\eta-3}\frac{2\pi e^{-\sqrt{C}}}{C^{3/2}}\left(\sin(\tau-\phi-u^*/\eps)+\OO\left(\eps,\mu\eps^{\eta-3}\right)\right).
\]
 \item If  $r>2$,
\[
\wt d(\tau)=\mu\eps^{\eta-3r/2}\frac{2\pi }{C^{3/2}}\left(\sin(\tau-\phi-u^*/\eps)+\OO\left(\eps^{r/2-1},\mu\eps^{\eta-3r/2}\right)\right),
\]
\end{itemize}where $u^\ast=\ln(1+\sqrt{2})$ and $\phi=\phi(\eps,\al)$ is the constant given in Corollary \ref{coro:NonExpSmall}.
\end{theorem}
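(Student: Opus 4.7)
The plan is to follow the complex-extension strategy that will be employed for Theorem~\ref{th:Main}, modified so that the complex domains on which the perturbed manifolds are constructed are adapted to the fact that the closest singularity $\rr_-(\al)$ now sits at height $\Im\rr_-(\al) \sim \sqrt{C}\,\eps^{r/2}$ above the real axis and at real part $u^\ast + \OO(\eps^{r/2})$. As described in Section~\ref{sec:Melnikov}, I would parameterize the manifolds by generating functions $T^{u,s}$ via $y^{u,s} = \pa_u T^{u,s}/y_0$, so that the Hamilton-Jacobi equation reads
\[
\pa_u T^{u,s} + \frac{1}{2 y_0^2(u)}\bigl(\pa_u T^{u,s}\bigr)^2 + \frac{1}{\eps}\pa_\tau T^{u,s} = -\mu\eps^\eta\,\psi(x_0(u))\sin\tau,
\]
with $T^u \to 0$ as $\Re u \to -\infty$ and $T^s \to 0$ as $\Re u \to +\infty$.

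First I would construct $T^{u,s}$ on complex domains of the shape
\[
\DD^u = \{u\in\CC : \Re u < u^\ast + \kk\eps,\ |\Im u| < \Im\rr_-(\al) - \kk\eps\},
\]
with $\DD^s$ its reflection across $\Re u = u^\ast$, both staying at distance $\OO(\eps)$ from $\rr_\pm$ and their complex conjugates. On these domains I would solve the invariance equation by a Picard iteration in a weighted Banach space whose norm weight is a suitable power of $|(u-\rr_-)(u-\ol{\rr}_-)(u-\rr_+)(u-\ol{\rr}_+)|$ matched to the order-two pole of $\psi(x_0(u))$. The hypothesis $\eta > \max\{r+1,\,3r/2\}$ is precisely what makes the quadratic Hamilton-Jacobi term subdominant with respect to the forcing in this weighted norm, yielding $T^{u,s} = \mu\eps^\eta T_1^{u,s} + \OO(\mu^2\eps^{2\eta}/w(u))$, where $T_1^{u,s}$ is the Melnikov primitive.

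Second, $\Delta T := T^s - T^u$ satisfies a homogeneous first-order linear PDE on the overlap $\DD^u \cap \DD^s$, which is a rectangle of width $\OO(\eps)$ around $\Re u = u^\ast$ reaching up to $\Im u = \Im\rr_-(\al) - \kk\eps$. Hence $\Delta T$ is $2\pi$-periodic in $\tau$ with zero mean, and its $k$-th Fourier coefficient $\Delta T^{[k]}(u)$ gains a factor $e^{-|k|(\Im\rr_- - \kk\eps)/\eps}$ from the maximum principle applied on the top side of the overlap. This produces the exponential factor $e^{-\Im\rr_-/\eps}$ in the regime $r<2$ and hence \eqref{def:Area:rmenor}, whereas for $r\ge 2$ the exponential is of order one and one reads off the algebraic predictions \eqref{def:Area:r2}--\eqref{def:Area:rMajor2}. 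The first Fourier coefficient is identified with the residue at $\rr_-$ by contour deformation, exactly as in the proofs of Proposition~\ref{prop:Melnikov} and Corollary~\ref{coro:NonExpSmall}; transferring to the section $x = 3\pi/2$ via $\wt d(\tau) = 2\sqrt 2\,\pa_u \Delta T(u^\ast,\tau)$ then yields the three stated asymptotics.

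The main obstacle is the weighted contraction in the first step. Unlike the wide-strip setting of Theorem~\ref{th:Main}, the forcing $\mu\eps^\eta\psi(x_0(u))$ is \emph{not} uniformly small on $\DD^{u,s}$: at the boundary adjacent to $\rr_-$ it grows like $\mu\eps^{\eta - 2r}$, and the coefficient $1/y_0^2(u)$ in front of the quadratic term also blows up at the tips of the domain. Choosing the weight $w$ and the parameter $\kk$ so that the contraction closes while still permitting $\DD^{u,s}$ to reach within $\OO(\eps)$ of $\rr_-$ is what pins down the precise threshold $\eta > \max\{r+1,\,3r/2\}$ and is the technical heart of the argument; once this is done the remaining steps mimic those in the polynomial case treated in \cite{BaldomaFGS11}.
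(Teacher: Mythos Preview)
Your strategy for the case $0<r<2$ is essentially the paper's: the proof of Theorem~\ref{th:Main} in Section~\ref{sec:SketchProof:1} is carried out under the single hypothesis~\eqref{cond:ExpSmall}, i.e.\ $0\le\al\ll 1-\eps^2$, which already covers $\al=1-C\eps^r$ with $r\in(0,2)$. The condition $\eps^{\eta-1}(\eps+\sqrt{\al})/(1-\al)\ll 1$ becomes exactly $\eta>r+1$, and the first statement of Theorem~\ref{th:Main:NonExp} is then read off from Corollary~\ref{coro:CotaExpPetita}. Your description (weighted contraction for the Hamilton--Jacobi equation, then a Lazutkin-type bound on the overlap) matches Theorems~\ref{th:ExistenceManifolds}, \ref{th:Canvi} and Lemma~\ref{lemma:Lazutkin}; note only that the paper straightens the variable-coefficient operator $\wt\LL_\eps$ to $\LL_\eps$ before invoking the lemma, rather than appealing to a maximum principle directly.

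For $r\ge 2$ your construction has a gap: the domain you write, $|\Im u|<\Im\rr_-(\al)-\kk\eps$, is \emph{empty} once $r>2$, since $\Im\rr_-\sim\sqrt{C}\,\eps^{r/2}\ll\eps$. You cannot ``reach within $\OO(\eps)$ of $\rr_-$'' from below the real axis when the singularity itself is closer than $\eps$ to the real axis. The paper does not try to salvage the complex-extension argument here; instead it observes that since the Melnikov function is no longer exponentially small (Corollary~\ref{coro:NonExpSmall}), classical perturbation theory suffices. Theorem~\ref{th:ExistenceManifolds:NonExp} solves the fixed-point equation with the \emph{unweighted} sup norm~\eqref{def:Norma:NonExp}, obtaining $\|\pa_u Q\|\le b_4|\mu|\eps^{\eta-3r/2}$ directly from the real integral $\int |v-\rr_-|^{-2}|v-\ol\rr_-|^{-2}\,dv\sim\eps^{-3r/2}$; the Lipschitz constant is then $|\mu|\eps^{\eta-3r/2}$, which is where the hypothesis $\eta>3r/2$ enters. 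No straightening, no Lazutkin lemma, and no weights near the singularity are needed: the bound $|\pa_u T^u-\pa_u T_0-\mu\eps^\eta\MM^u|\le b_4|\mu|^2\eps^{2\eta-3r}$ already yields the stated asymptotics for $\wt d(\tau)$ by subtraction.
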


\section{The Melnikov function and its validity in the quasiperiodic case}\label{sec:QP} 
Analogously to the periodic case, we work in the extended phase space that now is $(x,y,\theta_1,\theta_2)\in \TT\times\RR\times\TT^2$. Note that we do not work in the whole symplectic space, namely we omit the actions $(I_1,I_2)$, since they do not have any dynamical interest. In this setting $\TTT=\{(0,0,\theta_1,\theta_2);(\theta_1,\theta_2)\in\TT^2\} $ is a normally hyperbolic torus and, for $\mu=0$, its stable and unstable invariant manifolds coincide along the homoclinic manifold 
\[
\begin{split}
 \WW^u(\TTT)=\WW^s(\TTT)&=\left\{(x,y,\theta_1, \theta_2);\, H_0(x,y)=0\right\}\\
&=\left\{(x,y,\theta_1, \theta_2)=(x_0(u),y_0(u),\theta_1, \theta_2);\, (u,\theta_1, \theta_2)\in\RR\times\TT^2\right\},
\end{split}
\]
where $(x_0(u),y_0(u))$ is the time parameterization of the homoclinic orbit, which is given in \eqref{def:separatrix}. 

Then, we look for the perturbed manifolds as
\[\left\{
 \begin{split}
  x&=x_0(u)\\
y&=y^{u,s}(u,\theta_1, \theta_2).
 \end{split}
\right.
\]
Here $(u,\theta_1, \theta_2)\in (-\infty, U)\times\TT^2$, for certain $U>0$, for the unstable manifold and $(u,\theta_1, \theta_2)\in (-U,+\infty)\times\TT^2$ for the stable one. Again, the Lagrangian character of the manifolds implies that the functions $y^{u,s}$ are  given by
\[
 y^{u,s}(u,\theta_1, \theta_2)=\frac{1}{y_0(u)}\pa_u T^{u,s}(u,\theta_1, \theta_2).
\]
Therefore, as in the periodic case,  a natural way to measure the difference between the manifolds is to compute 
\[
 D(u,\theta_1, \theta_2)=\pa_u T^{s}(u,\theta_1, \theta_2)-\pa_u T^{u}(u,\theta_1, \theta_2),
\]
whose first order in $\mu$ is given by the Melnikov function
\[
\MM\left(u,\theta_1, \theta_2\right)=\int_{-\infty}^{+\infty} \left\{H_0,H_1\right\}\left(x_0(u+s),y_0(u+s),\theta_1+\frac{s}{\eps},\theta_2+\ga\frac{s}{\eps}\right)ds.
\]
Namely, one has that 
\[
 D(u,\theta_1, \theta_2)=\mu\eps^\eta M(u,\theta_1, \theta_2)+\OO\left(\mu^2\eps^{2\eta}\right).
\]
As in the periodic case, for $\al$ small or fixed independently of $\eps$, we measure the distance at the section  $x=\pi$, which is given by
\begin{equation}\label{def:distancia:QP}
 d(\theta_1,\theta_2)=y^s(0,\theta_1,\theta_2)-y^u(0,\theta_1,\theta_2)=\frac{1}{2}D(0,\theta_1, \theta_2),
\end{equation}
whereas in the case $\al=1-C\eps^r$ with $r>0$ and $C>0$, we measure it at the section $x=3\pi/2$, which is given by
\begin{equation}\label{def:distancia:QP:moguda}
\wt d(\theta_1,\theta_2)=y^s\left(\ln\left(1+\sqrt{2}\right),\theta_1, \theta_2\right)-y^u\left(\ln\left(1+\sqrt{2}\right),\theta_1, \theta_2\right)=2\sqrt{2}D\left(\ln\left(1+\sqrt{2}\right),\theta_1, \theta_2\right).
\end{equation}

\subsection{The Melnikov function}
If one applies the Poincar\'e-Melnikov method to Hamiltonian  \eqref{def:ToyModel:Hamiltonian:QP}, obtains
\begin{equation}\label{def:Melnikov:QP}
\MM\left(u,\theta_1,\theta_2;\eps,\al\right)=4\int_{-\infty}^{+\infty} \beta(u+s) F\left(\theta_1+\frac{s}{\eps},\theta_2+\frac{\ga s}{\eps}\right)ds,
\end{equation}
where $\beta$ is the function defined in \eqref{def:MelnikovIntegrand} and $F$ is the quasiperiodic perturbation  \eqref{def:F:Fourier} satisfying \eqref{hyp:QP1} and \eqref{hyp:QP2}. 

In the quasiperiodic case, it is a well known fact that the size of the Melnikov function is not given by its first harmonic (see 
\cite{Simo94, DelshamsGJS97, DelshamsGJS99}). Instead, the leading harmonic depends on $\eps$. For this reason, we need to compute carefully the size of all harmonics. We compute them using Residuum Theory and the properties of the function $\beta$ given in Section \ref{sec:Melnikov}. We follow the same approach of \cite{DelshamsGJS97}. For this reason, let us first introduce certain functions and constants defined in that paper.

We define the $2\ln\ga$-periodic function $c(\de)$ defined by
\begin{equation}\label{def:c}
 c(\de)=C_0\cosh\left(\frac{\de-\de_0}{2}\right)\,\,\text{ for }\de\in [\de_0-\ln\ga,\de_0+\ln\ga],
\end{equation}
where
\[
C_0=2\sqrt{\frac{(\ga r_1+r_2)}{\ga+\ga\ii}},\,\,\,\de_0=\ln\eps^\ast,\,\,\,\eps^\ast=\frac{ \left(\ga+\ga\ii\right)}{\ga^2(r_1\ga+r_2)}
\]
and continued by $2\ln\ga$-periodicity onto the whole real axis. Note that the constant $C_0$ is slightly different from the one considered in \cite{DelshamsGJS97} since in that paper, it includes a $\pi/2$ coefficient coming from the imaginary part of the singularity of the unperturbed separatarix. Since in this paper the singularity changes with respect to $\al$, we have defined a new constant which is independent of it. Then, the formulas of the splitting, given in Proposition \ref{prop:Melnikov:QP} and Theorems \ref{th:Main:QP} and \ref{th:Main:NonExp:QP}, will contain the dependence on the imaginary part of the singularity explicitly. Following the lines in \cite{DelshamsGJS97}, one can see that the function $c(\de)$ oscillates and has lower and upper bounds independent of $\eps$.

One can use the function $c(\de)$ to give the size of the Melnikov function as done in \cite{DelshamsGJS97}. In the next proposition we see how this size changes depending on the relation between $\eps$ and $\al$. In all the results in the quasiperiodic case we include the case $\al=0$, since the proof we present is also valid in this case and slightly improves the results in the literature \cite{DelshamsGJS97, Sauzin01}.

\begin{proposition}\label{prop:Melnikov:QP}
Let us assume \eqref{hyp:QP1} and \eqref{hyp:QP2} and fix $u_0>0$. Then, there exists $\eps_0>0$ such that for $\eps\in (0,\eps_0)$ and $\alpha\in [0,1)$,
\begin{itemize}
 \item If $\al$ satisfies $0\leq\alpha\leq C\eps^\nu$ where $\nu>1$ and $C>0$ are
constants independent of $\eps$, the Melnikov function \eqref{def:Melnikov:QP}
satisfies that
\begin{equation}\label{def:Melnikov:BandaAmpla:QP}
\frac{C_1}{\eps}e^{ -c(\ln(2\eps/\pi)){\dps\sqrt{\tfrac{\pi}{2\eps}}}}\leq \sup_{(u,\theta_1,\theta_2)\in (-u_0,u_0)\times\TT^2}\left|\MM\left(u,\theta_1,\theta_2;\eps,\al\right)\right|\leq \frac{C_2}{\eps}e^{ -c(\ln(2\eps/\pi)){\dps\sqrt{\tfrac{\pi}{2\eps}}}} \end{equation}
for certain constants $u_0,C_1,C_2>0$.
\item In the intermediate case $\alpha=\al_*\eps+\OO(\eps^2)$, one can only give upper bounds of type 
\[
\sup_{(u,\theta_1,\theta_2)\in (-u_0,u_0)\times\TT^2}\left|\MM\left(u,\theta_1,\theta_2;\eps,\al\right)\right|\leq \frac{C_2}{\eps}e^{ -c(\ln(2\eps/\pi)){\dps\sqrt{\tfrac{\pi}{2\eps}}}}
\]
for certain constant $u_0,C_2>0$.
\item If  $\alpha$  satisfies  $C\eps^\nu<\alpha<1-C\eps^r$
where $\nu\in (0,1]$, $r\in [0,2)$ and $C>0$ are constants independent of $\eps$, the Melnikov function satisfies
\begin{multline}\label{def:Melnikov:BandaEstreta:QP}
\frac{C_1}{\sqrt{\eps\al}(1-\al)^{5/4}}e^{ -c(\ln(\eps/\Im\rr_-)){\dps\sqrt{\tfrac{\Im\rr_-}{\eps}}}}
\leq \sup_{(u,\theta_1,\theta_2)\in (-u_0,u_0)\times\TT^2}\left|\MM\left(u,\theta_1,\theta_2;\eps,\al\right)\right|\\
\leq \frac{C_2}{\sqrt{\eps\al}(1-\al)^{5/4}}e^{ -c(\ln(\eps/\Im\rr_-)){\dps\sqrt{\tfrac{\Im\rr_-}{\eps}}}} 
\end{multline}
for certain constants $C_1,C_2>0$.
\end{itemize}
\end{proposition}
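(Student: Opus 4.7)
My plan is to follow the strategy of \cite{DelshamsGJS97}, adapting it to the dependence of the pole structure of $\beta$ on $\al$ that was already exploited in the proof of Proposition \ref{prop:Melnikov}. First I Fourier-expand $F$ in \eqref{def:Melnikov:QP} to write
\[
\MM\left(u,\theta_1,\theta_2;\eps,\al\right)=4\sum_{k\in\ZZ^2}F^{[k]}e^{ik\cdot\theta}\,I_k(u;\eps,\al),\qquad I_k(u;\eps,\al)=\int_{-\infty}^{+\infty}\beta(u+s)\,e^{i\om_k s/\eps}\,ds,
\]
where $\om_k=k_1+\ga k_2$. For each $k$ with $\om_k>0$, I shift the integration contour to $\Im s=\Im\rr_--\eps$ (and symmetrically for $\om_k<0$), which stays inside the strip of analyticity of $\beta$ and picks up only the closest pole $\rr_-$ (resp.\ $\ol\rr_-$), since $\Im\rr_+-\Im\rr_-$ is of order $\min(1,\sqrt{1-\al^2})$ and much larger than $\eps$ in the regimes of the last statement. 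Using \eqref{def:beta:singularity} the residue of $\beta(s)e^{i\om_k s/\eps}$ at $\rr_-$ is of the form $(\om_k/\eps)\,\de_2(\al)\,e^{-\om_k\Im\rr_-/\eps}/(4\pi)+\OO(\de_1(\al))$, giving the harmonic estimate
\[
|I_k(u;\eps,\al)|\,=\,\frac{|\om_k|\,|\de_2(\al)|}{2}\,e^{-|\om_k|\Im\rr_-/\eps}\left(1+\OO\!\left(\frac{\eps}{|\om_k|\Im\rr_-}\right)\right),
\]
the remainder being controlled by the bound of $\beta$ on the shifted line, which is uniformly of order $(1-\al)^{-1}$.

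Combining this with the hypotheses \eqref{hyp:QP1}--\eqref{hyp:QP2}, each Fourier term of $\MM$ is bounded above by $C|\om_k|\eps^{-1}|\de_2(\al)|e^{-r_1|k_1|-r_2|k_2|-|\om_k|\Im\rr_-/\eps}$ and bounded below by the analogous expression along the subsequence of continued-fraction convergents of $\ga$. The dominant harmonics are therefore determined by the minimization problem
\[
\min_{k\in\ZZ^2\setminus\{0\}}\left(r_1|k_1|+r_2|k_2|+|k_1+\ga k_2|\frac{\Im\rr_-}{\eps}\right),
\]
which, exactly as in Lemma 3.2 of \cite{DelshamsGJS97}, can be solved using the continued-fraction expansion of the golden mean. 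The answer can be written as $c(\de)\sqrt{\Im\rr_-/\eps}$ for the $2\ln\ga$-periodic function $c$ defined in \eqref{def:c}, evaluated at $\de=\ln(\eps/\Im\rr_-)$; summing over the (bounded number of) dominant harmonics and estimating the geometric tails yields the upper bound, while the lower bound is obtained by picking a dominant $k$ arising from a convergent of $\ga$ and absorbing all the other harmonics into an $\OO(1)$ multiplicative error, which is where hypothesis \eqref{hyp:QP2} is essential to avoid catastrophic cancellations.

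The three cases of the statement then correspond to three regimes of $\Im\rr_-$: when $\al\leq C\eps^\nu$ with $\nu>1$, the expansion \eqref{eq:Expansio:Sing} gives $\Im\rr_-=\pi/2+\OO(\sqrt{\al})$ and $|\de_2(\al)|=2\pi+\OO(\sqrt{\al})$, which together with $\sqrt{\al}/\eps\to 0$ produces the prefactor $1/\eps$ and the exponent $c(\ln(2\eps/\pi))\sqrt{\pi/(2\eps)}$; when $C\eps^\nu<\al<1-C\eps^r$ with $\nu\in(0,1]$ and $r<2$, using the explicit forms of $\de_2(\al)$ and the bounds $|\de_2(\al)|\asymp\sqrt{\al}/(1-\al)$ on the relevant range gives the prefactor $1/(\sqrt{\eps\al}(1-\al)^{5/4})$ (the extra $(1-\al)^{-1/4}$ comes from $\sqrt{\Im\rr_-/\eps}$ in the derivative of $c$); in the intermediate case $\al=\al_\ast\eps+\OO(\eps^2)$ the residues at $\rr_-$ and $\rr_+$ are of the same exponential order so a delicate cancellation may occur in the dominant harmonic, which is why only the upper bound is claimed. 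The main technical obstacle I anticipate is the uniform control of the remainder in $I_k$ for harmonics with very small $|\om_k|$ (those close to a resonance), where $\eps/(|\om_k|\Im\rr_-)$ is not automatically small; this is handled, as in \cite{DelshamsGJS97, Sauzin01}, by using that the next-best rational approximation of $\ga$ by convergents gives a lower bound $|\om_k|\gtrsim 1/|k_2|$ and then comparing $|k_2|\eps$ with $\Im\rr_-$ through the chosen dominant wavenumber.
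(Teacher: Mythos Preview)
Your treatment of the second and third statements follows the paper: Fourier-expand, compute the residue of $\beta(s)e^{i(k\cdot\omega)s/\eps}$ at the nearest pole $\rr_-$ to obtain formula \eqref{eq:Bound:Melnikov:Fourier}, and feed it into the minimisation machinery of \cite{DelshamsGJS97} with the rescaled small parameter $q=\eps/\Im\rr_-$. One slip: you write $|\de_2(\al)|\asymp\sqrt{\al}/(1-\al)$, but from \eqref{def:delta2} together with $|\cosh\rr_-|=\sqrt{2\al}$ one gets $|\de_2(\al)|\asymp 1/\big(\sqrt{\al}\,(1-\al)\big)$; since the final prefactor you quote is the correct one, this is presumably a typo.

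The first statement, however, has a genuine gap. Your claim ``$|\de_2(\al)|=2\pi+\OO(\sqrt{\al})$'' is false: the same computation gives $|\de_2(\al)|\sim\pi\sqrt{2}/\sqrt{\al}\to\infty$ as $\al\to0$. More to the point, when $\al\ll\eps$ the poles $\rr_\pm$ lie within $\OO(\sqrt{\al})$ of $i\pi/2$, and for the dominant harmonic $|k\cdot\omega|\sim\sqrt{\eps}$ one has $e^{-|k\cdot\omega|(\Im\rr_+-\Im\rr_-)/\eps}=e^{-\OO(\sqrt{\al/\eps})}\to1$, so the residue at $\rr_+$ is of the same exponential order as that at $\rr_-$ and cannot be discarded. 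The two diverging residues nearly cancel (they are coalescing into the triple pole of the $\al=0$ integrand), and it is precisely this cancellation that brings the polynomial prefactor down from $1/\sqrt{\eps\al}$ to the claimed $1/\eps$. The paper sidesteps the difficulty by expanding $\beta$ in powers of $\al$: the zeroth term $\MM_0$ is the $\al=0$ Melnikov function already analysed in \cite{DelshamsGJS97}, and shifting the contour to $\Im s=\pi/2-\sqrt{\eps}$ bounds the $n$-th term by $\al^n(K/\eps)^{n+1}$ times the same exponential, so the tail is dominated by $\MM_0$ exactly when $\al\ll\eps$, i.e.\ $\nu>1$. (Incidentally, your remark ``$\sqrt{\al}/\eps\to0$'' would require $\nu>2$; that is the periodic threshold, not the quasiperiodic one.)
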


\begin{proof}
As in the periodic case, to prove the first statement, instead of computing the residuums of both
$\rr_\pm$ and look for cancellations, we just expand the Melnikov integral,
which is uniformly convergent as a real integral, in power series of $\al$ as  
\begin{equation}\label{eq:Melnikov:Expansio:QP}
 \MM\left(u,\theta_1,\theta_2;\eps,\al\right)=\sum_{n=0}^\infty \al^n \MM_n\left(u,\theta_1,\theta_2;\eps\right),
\end{equation}
where
\[
\MM_n\left(u,\theta_1,\theta_2;\eps\right)=4 (n+1)2^n\int_{-\infty}^{+\infty}
\frac{\sinh^{n+1}(u+s)}{\cosh^{2n+3}(u+s)}F\left(\theta_1+\frac{s}{\eps},\theta_2+\frac{\ga s}{\eps}
\right)ds.
\]
The function $\MM_0$ was computed in \cite{DelshamsGJS97}, and in that paper it was shown that it can be bounded as formula \eqref{def:Melnikov:BandaAmpla:QP}. The rest of the functions $\MM_n$ can be bounded as follows.  We Fourier-expand $\MM_n$ in $(\theta_1,\theta_2)$ and we change the path of integration to $\Im s=\pi/2-\sqrt{\eps}$ or $\Im s=-(\pi/2-\sqrt{\eps})$ depending on the sign of $k\cdot\omega=k_1+\gamma k_2$. Then, we can bound  each harmonic as
\[
\left| \MM_n^{[k]}(u;\eps)\right|\leq \left(\frac{K}{\eps}\right)^{n+1} \left|F^{[k]}\right| e^{-\dps\tfrac{|k\cdot\omega|}{\eps}\left(\tfrac{\pi}{2}-\sqrt{\eps}\right)}.
\]
Therefore, proceeding as in \cite{DelshamsGJS97}, one can see that $\MM_n\left(u,\theta_1,\theta_2;\eps\right)$ can be bounded as 
\[
\left| \MM_n\left(u,\theta_1,\theta_2;\eps\right)\right|\leq \left(\frac{K}{\eps}\right)^{n+1} e^{ -c(\ln(2\eps/\pi)){\dps\sqrt{\tfrac{\pi}{2\eps}}}},
\]
where the function $c$ is given in \eqref{def:c}.

Therefore, if $\eps\ll\al$, the series \eqref{eq:Melnikov:Expansio:QP} is decreasing, and therefore, the leading term is given by the first order.

For the second and third statements, we just apply the Residuum Theory
to the Fourier harmonics of the Melnikov function \eqref{def:Melnikov:QP}. In can be easily seen that they  have size
\begin{equation}\label{eq:Bound:Melnikov:Fourier}
  \left| \MM^{[k]}(u)\right|= \left| F^{[k]}\right| \left|\frac{k\cdot\omega}{\eps}\de_2(\al)+ \de_1(\al)\right| e^{-\dps\tfrac{|k\cdot\omega|\Im\rr_-}{\eps}},
\end{equation}
where $\de_1(\al)$ and $\de_2(\al)$ are the functions defined in \eqref{def:delta1} and \eqref{def:delta2} respectively.
For the the range of $\al$ we are considering, the leading term is $\de_2$, which satisfies
\[
 \de_2(\al)\sim \frac{1}{\sqrt{\al}(1-\al)}.
\]
Finally one has to proceed as in \cite{DelshamsGJS97}. Even if the method in that paper does not apply directly since now $\rr_-$ depends on $\eps$, it is enough to consider as a new parameter $q=\eps/\Im\rr_-$, which is still small since we are assuming  $C\eps^\nu<\alpha<1-C\eps^r$
where $\nu\in (0,1]$, $r\in [0,2)$ and $C>0$ and therefore $\Im\rr_-\gg\eps$. With this new parameter $q$, it is straightforward to obtain  the size of the Melnikov function with the techniques in \cite{DelshamsGJS97}.
\end{proof}

\begin{remark}
Note that, as in the periodic case, the size of the Melnikov function depends strongly on $\al$. When the
strip of analyticity \eqref{def:Strip} is wide enough, that is $\al\sim\eps^\nu$ with
$\nu>1$, the Melnikov function behaves as in the entire case
$\alpha=0$ \cite{DelshamsGJS97}.  In other words, the Melnikov function does not notice the finiteness
of the strip of analyticity and the exponentially small coefficient is given by
the imaginary part of the singularities of the separatrix. Nevertheless, note that the condition is different. In the periodic case was needed $\al\ll\eps^2$ instead of $\al\ll\eps$.

When the strip of analyticity is independent of $\eps$
or not extremely big with respect to $\eps$, namely taking $\alpha\sim\eps^\nu$
with $\nu\in [0,1)$, the exponentially small coefficient multiplying the periodic function $c(\de)$ does not coincide with
the imaginary part of the singularity of the unperturbed separatrix. Instead, it
is given by the imaginary part of this new singularity $\rr_-$, which appears
when one evaluates the perturbation along the unperturbed separatrix. Note
that even if one takes, for instance, $\al=\sqrt{\eps}$, which gives a strip of analyticity of order $\frac{1}{2}\ln\frac{1}{\eps}$, one has that, using \eqref{eq:Expansio:Sing},
\[
\MM\left(u,\theta_1,\theta_2;\eps,\sqrt{\eps}\right)\sim\eps^{-\frac{3}{4}} e^{ -c(\ln(\eps/\Im\rr_-)){\dps\sqrt{\tfrac{\pi}{2\eps}}}\left(1-\frac{\eps^{1/4}}{\pi}\right)}.
\]
That is, even for perturbations with a wide strip of analyticity with respect to
$\eps$, there appears a correcting term in the exponential.

Notice that the case $\al\sim\eps$ is the boundary between these two
different behaviors. 
\end{remark}

\subsubsection{Narrow strip of analyticity: a drastic change on the size of the Melnikov function}\label{sec:MelnikovNonExp:QP}
In this section, we study how the Melnikov increases when the analyticity strip shrinks, namely when  $\al=1-C\eps^r$ with $r\geq 0$ and $C>0$. Next corollary  gives upper and lower bounds for it in this case.
\begin{corollary}\label{coro:NonExpSmall:QP}
Let us assume \eqref{hyp:QP1} and \eqref{hyp:QP2} and fix $u_0>0$. Then, if one takes $\al=1-C\eps^r$ with $C>0$ and $r>0$,  the Melnikov function in \eqref{def:Melnikov:BandaEstreta:QP} has the following upper and lower bounds
\begin{itemize}
\item If $0<r<2$, 
\begin{multline}\label{def:Melnikov:BandaEstreta:QP2}
\frac{C_1}{\sqrt{\al}\eps^{1/2+5r/4}}e^{ -c(\ln(\eps/\Im\rr_-)){\dps\sqrt{\tfrac{\Im\rr_-}{\eps}}}}
\leq \sup_{(u,\theta_1,\theta_2)\in (-u_0,u_0)\times\TT^2}\left|\MM\left(u,\theta_1,\theta_2;\eps,\al\right)\right|\\
\leq \frac{C_2}{\sqrt{\al}\eps^{1/2+5r/4}}e^{ -c(\ln(\eps/\Im\rr_-)){\dps\sqrt{\tfrac{\Im\rr_-}{\eps}}}} 
\end{multline}
\item If $r\geq 2$,
\[
\frac{C_1}{\eps^{3r/2}} \leq \sup_{(u,\theta_1,\theta_2)\in (-u_0,u_0)\times\TT^2}\left|\MM\left(u,\theta_1,\theta_2;\eps, 1-C\eps^r\right)\right|\leq \frac{C_2}{\eps^{3r/2}}
\]
\end{itemize}
for certain constants $C_1,C_2>0$.
\end{corollary}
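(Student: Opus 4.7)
My plan is to prove the corollary by specializing Proposition \ref{prop:Melnikov:QP} and the underlying integral analysis of $\beta$ to the regime $\al = 1 - C\eps^r$.

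For $0 < r < 2$ I would directly invoke the third statement of Proposition \ref{prop:Melnikov:QP}. The hypothesis $C'\eps^\nu < \al < 1 - C'\eps^{r'}$ is satisfied with $r' = r$ and $C' = C/2$ for small $\eps$ (the lower bound $\al > C''\eps^\nu$ is trivial since $\al \to 1$), and the implicit requirement $\Im\rr_-\gg\eps$ used in the proof of that statement holds because \eqref{eq:Expansio:Sing:alfa1} gives $\Im\rr_-\sim\sqrt{C}\eps^{r/2}$ with $r/2<1$. Substituting $1-\al = C\eps^r$ and $\sqrt{\al}\to 1$ into the prefactor $1/(\sqrt{\eps\al}(1-\al)^{5/4})$ of \eqref{def:Melnikov:BandaEstreta:QP} and absorbing the fixed constant $C^{-5/4}$ into $C_1, C_2$ yields \eqref{def:Melnikov:BandaEstreta:QP2}.

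For $r\geq 2$ the exponential factor $e^{-c(\cdot)\sqrt{\Im\rr_-/\eps}}$ becomes bounded (it tends to $1$ for $r>2$ and to a positive constant for $r=2$), and Proposition \ref{prop:Melnikov:QP}(iii) no longer applies in its stated form. I would argue directly from \eqref{def:Melnikov:QP}. Using the identity
\[
\beta(s) = -\tfrac{1}{2}\frac{d}{ds}\frac{1}{\cosh^2 s - 2\al\sinh s} + \frac{\al\cosh s}{(\cosh^2 s - 2\al\sinh s)^2}
\]
together with the substitution $t=\sinh s$ gives the explicit value $\int_{-\infty}^{+\infty}\beta(s)\,ds = \frac{\al\pi}{2(1-\al^2)^{3/2}} \sim \eps^{-3r/2}$. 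Since $\beta$ restricted to the real line has a positive peak near $u_0 = \mathrm{arcsinh}(\al) \approx \ln(1+\sqrt{2})$ of height $\sim (1-\al^2)^{-2}$ and width $\sim\sqrt{1-\al^2}\sim\eps^{r/2}$, the same estimate holds for $\int|\beta|$. Hence the upper bound $|\MM(u,\theta)|\leq 4\|F\|_\infty\int|\beta|\leq C_2\eps^{-3r/2}$ is immediate.

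For the matching lower bound I would split in two subcases. When $r>2$ the peak width $\eps^{r/2}$ is much smaller than the oscillation period $\eps$ of $F(\theta+\omega s/\eps)$ in $s$, so a Laplace-type argument yields
\[
\MM(u,\theta) = 4\,F\!\left(\theta + \omega\tfrac{u_0-u}{\eps}\right)\int\beta(s)\,ds + \OO(\eps^{-r-1}),
\]
and the error is strictly smaller than $\eps^{-3r/2}$ precisely when $r>2$; choosing $\theta$ so that the argument of $F$ lands at a point where $F$ is non-zero (possible since \eqref{hyp:QP2} forces $\|F\|_\infty>0$) closes the lower bound. The boundary case $r=2$ is the main obstacle, since the peak width and the oscillation period coincide and the Laplace reduction fails. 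I would handle it by reworking the proof of Proposition \ref{prop:Melnikov:QP}(iii) with $q=\eps/\Im\rr_-=1/\sqrt{C}$ treated as a bounded (no longer small) parameter: the harmonic-by-harmonic residue bound \eqref{eq:Bound:Melnikov:Fourier} still holds, the exponential factor $e^{-c(\cdot)\sqrt{\Im\rr_-/\eps}}$ contributes a bounded-below constant, and the prefactor specializes to $\eps^{-1/2-5r/4}|_{r=2}=\eps^{-3}$, matching the claimed $\eps^{-3r/2}$ at $r=2$.
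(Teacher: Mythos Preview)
For $0<r<2$ your argument matches the paper's exactly: both just substitute $1-\al=C\eps^r$ into the third statement of Proposition~\ref{prop:Melnikov:QP}. For $r\geq 2$ the paper takes a shorter, uniform route: it reads both bounds directly from the exact Fourier formula \eqref{eq:Bound:Melnikov:Fourier}, using $\de_1(\al)=2\pi\al(1-\al^2)^{-3/2}\sim\eps^{-3r/2}$, $\de_2(\al)/\eps\sim\eps^{-r-1}$, and the fact that $e^{-|k\cdot\omega|\Im\rr_-/\eps}$ is bounded for each fixed $k$ when $r\geq 2$; one nonzero harmonic gives the lower bound and summation over $k$ (convergent by \eqref{hyp:QP1}) gives the upper bound, with no split at $r=2$. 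Your real-variable route---the $L^1$ bound on $\beta$ and the Laplace localization for $r>2$---is correct and has the merit of producing the explicit leading term $4F(\theta+\omega(s_\ast-u)/\eps)\int\beta$, but it forces a separate treatment of $r=2$. There your phrasing is slightly off: the function $c(\cdot)$ and the prefactor $\eps^{-1/2-5r/4}$ arise from the optimization over harmonics in \cite{DelshamsGJS97}, which is designed for $\Im\rr_-/\eps\to\infty$ and is not the relevant mechanism here; what actually closes the $r=2$ case is precisely the paper's direct reading of \eqref{eq:Bound:Melnikov:Fourier}, where for a fixed low-order $k$ one has $|\tfrac{k\cdot\omega}{\eps}\de_2+\de_1|\sim\eps^{-3}$ and $e^{-|k\cdot\omega|\sqrt{C}}$ is a fixed positive constant.
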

The first statement of this corollary is just a rewriting of the third statement of Proposition \ref{prop:Melnikov:QP}. The second one is a direct consequence of formula \eqref{eq:Bound:Melnikov:Fourier}, if one takes into account the definition of $\de_1$ and $\de_2$ in \eqref{def:delta1} and \eqref{def:delta2} and the asymptotics for $\rr_-$ in \eqref{eq:Expansio:Sing:alfa1}.

\begin{remark}
When the strip of analyticity shrinks, which ocurrs when $\al$ approaches 1, the  Melnikov function increases exponentially. For instance, if one takes $\al=1-\eps$,  using \eqref{eq:Expansio:Sing:alfa1},
\[
\MM\left(u,\theta_1,\theta_2;\eps,1-\eps\right)\sim\eps^{-\frac{7}{4}} e^{\dps-\tfrac{c(\ln(\eps/\Im\rr_-))}{\eps^{1/4}}}.
\]
The exponentially small Melnikov function keeps increasing until the limiting case $\al\sim 1- \eps^2$ when it becomes non-exponentially small.
\end{remark}

\subsection{Validity of the Melnikov function}
Once we have computed the Melnikov function in Proposition \ref{prop:Melnikov:QP},  we compute the prediction it gives for the distance between the manifolds at the section $x=\pi$ for $0<\al\leq\al_0<1$ for any fixed $\al_0$

If $\al$ satisfies $0\leq\al\leq C\eps^\nu$ where $\nu>1$ and $C>0$ are constants
independent of $\eps$, we can ensure that $d_0$ satisfies
\begin{equation}\label{def:DistanciaMelnikov:BandaAmpla:QP}
C_1|\mu|\eps^{\eta-1}e^{ -c(\ln(2\eps/\pi)){\dps\sqrt{\tfrac{\pi}{2\eps}}}}\leq\max_{(\theta_1,\theta_2)\in \TT^2}\left|d_0(\theta_1,\theta_2)\right|\leq C_2|\mu|\eps^{\eta-1}e^{ -c(\ln(2\eps/\pi)){\dps\sqrt{\tfrac{\pi}{2\eps}}}}.
\end{equation}
If $\al=\al_*\eps+\OO(\eps^2)$ for any constant $\al_*>0$, $d_0$ satisfies
\begin{equation}\label{def:DistanciaMelnikov:BandaIntermedia:QP}
\max_{(\theta_1,\theta_2)\in \TT^2}\left|d_0(\theta_1,\theta_2)\right|\leq C_2|\mu|\eps^{\eta-1}e^{ -c(\ln(2\eps/\pi)){\dps\sqrt{\tfrac{\pi}{2\eps}}}}.
\end{equation}
Finally, if   $C\eps^\nu<\al\leq\al_0<1$ where
$\nu\in (0,2)$ and $C>0$ are constants independent of $\eps$, $d_0$ satisfies
\begin{multline}\label{def:DistanciaMelnikov:BandaEstreta:QP}
\frac{C_1|\mu|\eps^{\eta-\frac{1}{2}}}{\sqrt{\al}}e^{ -c(\ln(\eps/\Im\rr_-)){\dps\sqrt{\tfrac{\Im\rr_-}{\eps}}}} \leq\max_{(\theta_1,\theta_2)\in \TT^2}\left|d_0(\theta_1,\theta_2)\right|\\\leq \frac{C_2|\mu|\eps^{\eta-\frac{1}{2}}}{\sqrt{\al}}e^{ -c(\ln(\eps/\Im\rr_-)){\dps\sqrt{\tfrac{\Im\rr_-}{\eps}}}}.
\end{multline}
In all three formulas $C_1, C_2>0$ are constants independent of $\al$ and $\eps$.

As we have already explained, direct
application of Melnikov theory only ensures that the distance between the invariant manifolds on the section $x=\pi$ is given
by
\[
 d(\theta_1,\theta_2)=d_0(\theta_1,\theta_2)+\OO\left(\mu^2\eps^{2\eta}\right).
\]
Therefore, the Melnikov function \eqref{def:Melnikov:QP}, is, in principle, the first order
provided  $\mu$  is exponentially small with respect to $\eps$.  Next theorem shows for which range of
parameters $\al$, $\eps$ and $\eta$, the Melnikov function gives the true first
order. In the quasiperiodic case, it is not known whether this range is the optimal one for which the Melnikov function predicts the splitting correctly.

\begin{theorem}\label{th:Main:QP}
Let us consider any $\mu_0>0$ and $\al_0\in (0,1)$. Then, there exists $\eps_0>0$ such that for
$|\mu|<\mu_0$, $\eps\in (0,\eps_0)$ and $\al\in [0,\al_0]$ such that 
$\eps^{\eta-1}(\sqrt{\eps}+\sqrt{\al})$ is small enough,
\begin{itemize}
 \item If $\al$ satisfies $0\leq\alpha\leq C\eps^\nu$ where $\nu\geq 1$ and $C>0$ are
constants independent of $\eps$, the distance between the invariant manifolds on the section $x=\pi$ is given by
\[
d(\theta_1,\theta_2)=d_0(\theta_1,\theta_2)+\OO\left(|\mu|^2\eps^{2\eta-2}e^{ -c(\ln(\eps/\Im\rr_-)){\dps\sqrt{\tfrac{\Im\rr_-}{\eps}}}}\right).
\]
\item If  $\alpha$ satisfies  $C\eps^\nu\leq\alpha \leq \al_0<1$
where $\nu\in [0,1)$ and $C>0$ are constants independent of $\eps$, the distance between the invariant manifolds on the section $x=\pi$ is given by
\[
d(\theta_1,\theta_2)=d_0(\theta_1,\theta_2)+\OO\left(|\mu|^2\eps^{2\eta-2}e^{ -c(\ln(\eps/\Im\rr_-)){\dps\sqrt{\tfrac{\Im\rr_-}{\eps}}}}\right).\]
\end{itemize}
\end{theorem}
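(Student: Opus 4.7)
The plan is to parameterize the invariant manifolds $\WW^{u,s}(\TTT)$ by generating functions $T^{u,s}(u,\theta_1,\theta_2)$ satisfying the Hamilton--Jacobi equation associated with \eqref{def:ToyModel:Hamiltonian:QP}, extend them holomorphically to a complex strip in $u$ reaching as close as possible to the upper singularity $\rr_-$ (and symmetrically to $\ol\rr_-$), and then estimate the difference $\Delta=T^s-T^u$ by Fourier analysis and a flow-box argument on the characteristics of the resulting linear PDE. This is the classical scheme for exponentially small splitting in the quasiperiodic case \cite{DelshamsGJS97, Sauzin01}, but here it has to be rerun with the $\al$-dependent singularity $\rr_-$ of $\psi(x_0(u))$ playing the role that $i\pi/2$ played in the entire case.

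In more detail, I would first set up, for each $\star\in\{u,s\}$, the equation
\[
\frac{1}{\eps}\omega\cdot \nabla_\theta T^\star + \frac{1}{2\, y_0(u)^2}\left(\pa_u T^\star\right)^2 + V(x_0(u)) + \mu\eps^\eta \psi(x_0(u))F(\theta_1,\theta_2)=0,
\]
and solve it by a contraction argument, first on real unbounded domains ($\Re u<U$ for $\star=u$, $\Re u>-U$ for $\star=s$) and then extending analytically to complex domains $\DD^\star_\kappa$ of the form $\{|\Im u|\leq \Im\rr_--\kappa\si\}$ in $u$ together with a strip in $\theta$. Simultaneously one shows that $T^\star = T_0 + \mu\eps^\eta T_1^\star + R^\star$, where $\pa_u(T_1^s-T_1^u)=\MM$ is the Melnikov function of Proposition~\ref{prop:Melnikov:QP} and $R^\star=\OO(\mu^2\eps^{2\eta})$ on $\DD^\star_\kappa$. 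The condition $\eps^{\eta-1}(\sqrt{\eps}+\sqrt{\al})$ small is precisely what makes the fixed-point operator contractive, since near $\rr_-$ the perturbation behaves like $(u-\rr_-)^{-2}$ (cf.\ \eqref{def:beta:singularity}), whose contribution carries a factor $\sqrt{\al}$ from the distance $|\rr_--i\pi/2|\sim\sqrt{\al}$ and a factor $\sqrt{\eps}$ coming from summing quasiperiodic harmonics with the diophantine divisors $|k\cdot\omega|$.

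Second, in the overlap $\DD^u_\kappa\cap\DD^s_\kappa$, subtracting the two Hamilton--Jacobi equations yields the homogeneous linear PDE
\[
\frac{1}{\eps}\,\omega\cdot\nabla_\theta \Delta + \AAA(u,\theta_1,\theta_2)\,\pa_u\Delta = 0, \qquad \AAA=\frac{\pa_u T^s+\pa_u T^u}{2\,y_0(u)^2}.
\]
A near-identity change of variables straightens the characteristics of this PDE, so that in the new angles $\Delta$ is essentially holomorphic in $u$ on $\DD^u_\kappa\cap\DD^s_\kappa$ and quasiperiodic in $\theta$. Expanding each harmonic $\Delta^{[k]}(u)$ and pushing $u$ to the boundary $\Im u=\pm(\Im\rr_--\kappa\si)$ yields the Cauchy-type bound
\[
|\Delta^{[k]}(u)|\leq C\,e^{-|k\cdot\omega|(\Im\rr_--\kappa\si)/\eps}\,\sup_{\pa \DD}|\Delta^{[k]}|,
\]
where the boundary supremum is controlled by $R^s-R^u=\OO(\mu^2\eps^{2\eta})$ together with one $\eps^{-1}$ from $\pa_u$. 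Summing over $k\in\ZZ^2$ with the diophantine properties of $\ga$, as in \cite{DelshamsGJS97}, reproduces the function $c(\ln(\eps/\Im\rr_-))$ and yields the announced remainder $\OO(|\mu|^2\eps^{2\eta-2}e^{-c(\ln(\eps/\Im\rr_-))\sqrt{\Im\rr_-/\eps}})$. Evaluating $d(\theta_1,\theta_2)=\tfrac{1}{2}\,\pa_u\Delta(0,\theta_1,\theta_2)$ via \eqref{def:distancia:QP} then gives the statement.

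The main obstacle is the simultaneous control required in the first two steps: the complex domain must be pushed close enough to $\rr_-$ so that the exponential factor $e^{-|k\cdot\omega|\Im\rr_-/\eps}$ is sharp for the diophantine-optimal harmonic (the one producing $c(\ln(\eps/\Im\rr_-))$), yet the Hamilton--Jacobi fixed point must remain contractive on that same domain despite the meromorphic blow-up of $\psi(x_0(u))$ at $\rr_-$. The trade-off between the boundary distance $\kappa\si$ to the singularity and the contraction constant is exactly what produces the smallness condition $\eps^{\eta-1}(\sqrt{\eps}+\sqrt{\al})\ll 1$; the $\sqrt{\al}$ is the price one pays for reaching $\rr_-$ and the $\sqrt{\eps}$ is the loss compared to the periodic case caused by the quasiperiodic small divisors. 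In the quasiperiodic case it is not clear whether this condition is optimal, which is why the theorem states a weaker hypothesis than in the periodic analogue.
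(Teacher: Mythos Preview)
Your proposal is correct and follows essentially the same architecture as the paper's proof: Hamilton--Jacobi generating functions $T^{u,s}$ extended to complex $u$-domains reaching within $\kappa\sqrt{\eps}$ of $\rr_-$, a flow-box (straightening) change of variables that conjugates the linear operator annihilating $\Delta=T^s-T^u$ to $\eps^{-1}\omega\cdot\nabla_\theta+\pa_u$, and then a Lazutkin-type lemma combining the Cauchy estimate in $u$ with the diophantine sum over $k$ to produce the factor $e^{-c(\ln(\eps/\Im\rr_-))\sqrt{\Im\rr_-/\eps}}$.

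One small imprecision: the $\eps^{2\eta-2}$ in the remainder does not arise as ``$\mu^2\eps^{2\eta}$ plus one $\eps^{-1}$ from $\pa_u$''. Rather, already at the level of the fixed-point argument on the complex domain, the bound for $\pa_uT^\star-\pa_uT_0-\mu\eps^\eta\MM^\star$ is $\OO(|\mu|^2\eps^{2\eta-2})$, because the Lipschitz constant of the nonlinear term picks up a factor $(\sqrt{\eps}+\sqrt{\al})/\eps$ from the singularity at $\rr_-$ combined with the quasiperiodic small-divisor loss. This is the same mechanism you identify for the smallness condition, but it feeds directly into the remainder bound, not through an extra derivative.
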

The proof of this theorem is deferred to Section \ref{sec:SketchProof:QP}. 
\begin{remark}
Note that even if this theorem holds true provided  $\eps^{\eta-1}(\sqrt{\eps}+\sqrt{\al})$ is small enough,
the Melnikov prediction is the true first order only if one imposes a more restrictive condition.

Comparing the size of the Melnikov prediction \eqref{def:DistanciaMelnikov:BandaAmpla:QP} and the size of the remainder, in the case $\al \ll\eps$, one has to impose the condition $\eta>1$. Let us observe that when $\al=0$, this theorem, which gives that Melnikov provides the true first order of the distance if $\eta>1$, is an improvement with respect to the  sharpest previous result \cite{DelshamsGJS97}, which needed the condition $\eta>3$ (see also \cite{Sauzin01}).

In the case  $\al \gg\eps$ one has to impose $\eps^{\eta-\frac{3}{2}}\sqrt{\al}\ll 1$. 

Finally, note that the intermediate case $\al=\al_*\eps+\OO(\eps^2)$ is included in the first statement of the theorem. Nevertheless, if one would check that $d_0$ is nondegenerate, Melinkov theory would give also in this case the correct prediction.
\end{remark}

\subsubsection{Narrow strip of analyticity: validity of the Melnikov function}\label{sec:NonExpSmallSplitting:QP}
In Section \ref{sec:MelnikovNonExp:QP} we have studied the size of the Melnikov function when $\al=1-C\eps^r$ with $C,r>0$. We devote this section to show for which range of parameters the Melnikov function gives the correct first order of the splitting.


First, we give the prediction of the distance given by the Melnikov function if $\al=1-C\eps^r$ with $r>0$ at the section $x=3\pi/2$. If $0<r<2$, 
\begin{multline}\label{def:DistanciaMelnikov:BandaMoltEstreta:QP}
C_1|\mu|\eps^{\eta-\frac{1}{2}-\frac{5r}{4}}e^{ -c(\ln(\eps/\Im\rr_-)){\dps\sqrt{\tfrac{\Im\rr_-}{\eps}}}} \leq\max_{(\theta_1,\theta_2)\in \TT^2}\left|\wt d_0(\theta_1,\theta_2)\right|\\\leq C_2|\mu|\eps^{\eta-\frac{1}{2}-\frac{5r}{4}}e^{ -c(\ln(\eps/\Im\rr_-)){\dps\sqrt{\tfrac{\Im\rr_-}{\eps}}}}
\end{multline}
whereas if $r>2$
\begin{equation}\label{def:distance:NonExp:QP}
C_1|\mu|\eps^{\eta-3r/2}\leq \max_{(\theta_1,\theta_2)\in\TT^2}\left|\wt d_0(\theta_1,\theta_2)\right|\leq C_2|\mu|\eps^{\eta-3r/2}.
\end{equation}

Next theorem proves the validity of this prediction under certain hypotheses.
\begin{theorem}\label{th:Main:NonExp:QP}
Let us consider any $\mu_0>0$ and $\al=1-C\eps^r$ with $r>0$, $C>0$, and let us assume $\eta>\max\{r+1,3r/2\}$. Then, there exists $\eps_0>0$ such that for
$|\mu|<\mu_0$ and $\eps\in (0,\eps_0)$, the invariant manifolds split and the distance between them on the section $x=3\pi/2$ is given by,
\begin{itemize}
\item If $0<r<2$,
\[
\wt d(\theta_1,\theta_2)=\wt d_0(\theta_1,\theta_2)+\OO\left(|\mu|^2\eps^{2\eta-2r-2}e^{ -c(\ln(\eps/\Im\rr_-)){\dps\sqrt{\tfrac{\Im\rr_-}{\eps}}}}\right).\]
\item If $r>2$,
\[
\wt d(\theta_1,\theta_2)=\wt d_0(\theta_1,\theta_2)+\OO\left(\mu\eps^{2\eta-3r}\right).
\]
\end{itemize}
\end{theorem}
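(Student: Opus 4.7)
The plan is to mirror the proof of Theorem \ref{th:Main:QP} given in Section \ref{sec:SketchProof:QP}, adapting it to the regime in which the singularity $\rr_-$ collapses toward the real axis at rate $\Im \rr_-(\al) = C^{1/2}\eps^{r/2} + \OO(\eps^r)$ while the perturbation has amplified $L^\infty$ size of order $\mu\eps^{\eta-2r}$ near the section $x = 3\pi/2$. The distance is measured at $u^\ast = \ln(1+\sqrt{2})$ precisely because beyond this point the manifolds may diverge substantially from the unperturbed homoclinic; the proof therefore reduces to constructing $T^{u,s}$ on complex domains that cover the real segment up to $u^\ast$ and reach imaginary parts up to a fraction of $\Im \rr_-$.

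For $r > 2$ the analysis is essentially classical. By Corollary \ref{coro:NonExpSmall:QP} the Melnikov prediction \eqref{def:distance:NonExp:QP} is of polynomial size $|\mu|\eps^{\eta-3r/2}$, while the perturbation restricted to a real neighborhood of the separatrix up to $u^\ast$ is of $L^\infty$ size $\mu\eps^{\eta-2r}$. A contraction argument for the Hamilton--Jacobi equation in real analytic Banach spaces with Fourier norms in $(\theta_1,\theta_2)$ as in \cite{DelshamsGJS97, Sauzin01} yields the existence of $T^{u,s}$ with $\wt d - \wt d_0 = \OO(\mu^2\eps^{2\eta-4r})$. The hypothesis $\eta > 3r/2$ then guarantees that this remainder is dominated by the first order, producing the stated bound.

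For $0 < r < 2$ the first order is exponentially small and one has to run the complex matching scheme. First I would construct $T^{u,s}$ as solutions of the Hamilton--Jacobi equation in complex domains extending up to imaginary distance $\OO(\Im \rr_-)$ from the real axis, working with Fourier weighted norms with weights $e^{|k|\Im\rr_-/\eps}$; the effective smallness parameter driving the contraction is $\eps^{\eta-2r}$, amplified by factors $\eps^{-r/2}$ coming from the proximity of $\rr_-$, so that the net requirement is exactly $\eta > r + 1$. Then I would expand $T^{u,s} = \mu\eps^\eta T^{u,s}_1 + \mu^2\eps^{2\eta} R^{u,s}$, identify $\pa_u(T^s_1 - T^u_1)$ with the Melnikov function \eqref{def:Melnikov:QP}, and bound $R^{u,s}$ by the same contraction scheme. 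Finally, using that $D = \pa_u T^s - \pa_u T^u$ is a first integral of the perturbed system in straightening coordinates around the homoclinic, its Fourier coefficients satisfy $D^{[k]}(u) = C^{[k]} e^{ik\cdot\omega u/\eps}$, which combined with the analyticity bound of Step~1 at height $\Im \rr_-$ yields $|C^{[k]}| \leq K e^{-|k\cdot\omega|\Im\rr_-/\eps}$. Summing these estimates at $u = u^\ast$ gives the remainder $\OO(|\mu|^2\eps^{2\eta-2r-2} e^{-c(\cdot)\sqrt{\Im\rr_-/\eps}})$, as announced in the theorem.

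The main obstacle will be the complex extension in the case $0 < r < 2$: one must verify that the fixed-point argument closes in a complex strip whose width $\Im\rr_- \sim \eps^{r/2}$ shrinks with $\eps$, while at the same time the operator norms deteriorate through hyperbolic factors of order $1/\Im \rr_- \sim \eps^{-r/2}$ and through the amplification $(1-\al)^{-2} \sim \eps^{-2r}$ of the perturbation near $u = u^\ast$. The condition $\eta > \max\{r+1, 3r/2\}$ is exactly the balance between these competing quantities; note that the two thresholds coincide at $r = 2$, which reflects the transition from the exponentially small regime to the polynomially small one and ensures that the proof interpolates continuously between the two cases.
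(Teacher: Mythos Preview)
Your outline for $0<r<2$ is in the right spirit and matches the paper's scheme: the paper simply specializes Corollary~\ref{coro:CotaExpPetita:QP}, whose hypotheses are met once $\al=1-C\eps^r$ with $r<2$, and the smallness condition $\eps^{\eta-1}\tfrac{\sqrt{\eps}+\sqrt{\al}}{1-\al}\ll1$ reads exactly $\eta>r+1$. Two details in your description are inaccurate but harmless: the Fourier weights in $(\theta_1,\theta_2)$ are the fixed widths $\sigma_i=r_i-d_i\sqrt{\eps}$ coming from the analyticity of $F$, not $e^{|k|\Im\rr_-/\eps}$; and the straightening change of Theorem~\ref{th:Canvi:QP} is what makes the difference satisfy $\LL_\eps\Phi=0$ exactly, replacing your direct Fourier argument on $D$.

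For $r>2$ there is a genuine gap. You bound the remainder by $\mu^2\eps^{2\eta-4r}$, obtained by squaring the pointwise size $\mu\eps^{\eta-2r}$ of the perturbation near $x=3\pi/2$. This is neither the stated bound $\OO(\mu^2\eps^{2\eta-3r})$ nor is it controlled by the hypothesis $\eta>3r/2$: for $\mu^2\eps^{2\eta-4r}$ to be dominated by the Melnikov size $\mu\eps^{\eta-3r/2}$ you would need $\eta>5r/2$, which is strictly stronger. The paper avoids this loss by working with the \emph{integrated} size of $\beta$ along the real line rather than its supremum. Concretely, with the sup norm \eqref{def:Norma:NonExp} one has
\[
\|\ol\FF(0)\|\;\leq\; K|\mu|\eps^\eta\int\frac{dv}{|v-\rr_-|^2|v-\ol\rr_-|^2}\;\sim\;|\mu|\eps^{\eta}\,(\Im\rr_-)^{-3}\;\sim\;|\mu|\eps^{\eta-3r/2},
\]
since $\Im\rr_-\sim\eps^{r/2}$. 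The same integral controls the Lipschitz constant of $\ol\FF$, giving $\mathrm{Lip}\,\ol\FF\leq K|\mu|\eps^{\eta-3r/2}$. Then $\eta>3r/2$ suffices for contraction, and the deviation from the half-Melnikov is $\mathrm{Lip}\cdot\|\ol\FF(0)\|=\OO(\mu^2\eps^{2\eta-3r})$, which is Theorem~\ref{th:ExistenceManifolds:NonExp:QP}. Your argument misses this gain of $\eps^{r}$ coming from integration through the narrow peak of width $\Im\rr_-$.
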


\section{The periodic case: Proof of Theorem
\ref{th:Main} and \ref{th:Main:NonExp}}\label{sec:SketchProof}
In Section \ref{sec:SketchProof:1} we prove Theorem \ref{th:Main}, that is when the parameter $\al$ is bounded away from 1. Nevertheless, we prove the result for a wider range of the parameter $\al$. Therefore, our proof deals also with the first statement of Theorem \ref{th:Main:NonExp}. For this reason, during Section  \ref{sec:SketchProof:1}  we assume the condition
\begin{equation}\label{cond:ExpSmall}
0\leq \al\ll1-\eps^2.
\end{equation}
Note, that this range of parameters correspond to an exponentially small Melnikov function. The second statement of Theorem \ref{th:Main:NonExp} is proved in Section \ref{sec:SketchProof:NonExp}

\subsection{Proof of Theorem \ref{th:Main}}\label{sec:SketchProof:1}
To study the splitting of separatrices we follow the approach proposed in
\cite{LochakMS03, Sauzin01, GuardiaOS10}, which was inspired by Poincar\'e. Namely we use the
fact that the invariant manifolds are Lagrangian graphs. This allows us to look for
parameterizations of the invariant manifolds as graphs of the gradient of
certain generating functions. 

First, let us point out that we will consider $\tau$ as a complex variable to take advantage of the analyticity of the Hamiltonian with respect to this variable. To this end, we consider a fixed $\sigma>0$ and we take
\[
 \tau\in \TT_\sigma=\{\tau\in \CC/\ZZ: |\Im \tau|<\sigma\}.
\]
Note that since the Hamiltonian is entire in $\tau$, we can take any $\sigma$. From now on, all the constants appearing in this section will depend on $\sigma$. As we will see during the proof, in the periodic case, the analyticity strip with respect to $\tau$ does not play any role in the size of the splitting.

Let us consider consider the symplectic change of variables 
\begin{equation}\label{eq:CanviSimplecticSeparatriu}
\left\{\begin{array}{l}
x=x_0(u)=4\arctan\left(e^u\right)\\
\dps y=\frac{w}{y_0(u)}=\frac{\cosh u}{2}w,
\end{array}\right.
\end{equation}
which was introduced in \cite{Baldoma06} (see also \cite{LochakMS03, Sauzin01})
and reparameterize time as $\tau=t/\eps$. With these new variables, the
Hamiltonian function \eqref{def:ToyModel:Hamiltonian} reads
\begin{equation}\label{def:ToyModel:HamiltonianReparameterized}
\eps \ol H\left(u,w,\tau\right)=\eps H\left(x_0(u),\frac{w}{y_0(u)},\tau\right)
\end{equation}
and the unperturbed separatrix can be parameterized as a graph as 
\[
w=y_0^2(u)=\frac{4}{\cosh^2u}.
\]
Then, one can look for the perturbed invariant manifolds as graphs of the
gradient of generating functions $T^{u,s}(u,\tau)$. Namely, we look for
functions $T^{u,s}(u,\tau)$ such that the invariant manifolds are given by
$w=\pa_u T^{u,s}(u,\tau)$. Moreover, these functions are solutions of the
Hamilton-Jacobi equation
\begin{equation}\label{def:HJ1}
 \eps\ii\pa_\tau T+\ol H\left(u, \pa_u T,\tau\right)=0.
\end{equation}
This equation reads
\begin{equation}\label{def:HJ2}
 \eps\ii\pa_\tau T+\frac{\cosh^2
u}{8}\left(\pa_uT\right)^2-\frac{4}{\cosh^2u}+\mu\eps^\eta \Psi(u)\sin\tau=0
\end{equation}
with
\begin{equation}\label{def:Psi}
 \Psi(u)=\psi(x_0(u)),
\end{equation}
where $\psi$ is the function considered in \eqref{def:ToyModel:Hamiltonian} and
$x_0(u)$ is the first component of the time-parameterization of the separatrix
(see \eqref{eq:CanviSimplecticSeparatriu}).

Moreover, we impose the asymptotic conditions
\begin{align}
\dps \lim_{\Re u\rightarrow-\infty}y_0\ii(u)\cdot\pa_u T^u(u,\tau)=0 &
\text{\quad (for the unstable manifold)}
\label{eq:AsymptCondFuncioGeneradora:uns}\\
\dps \lim_{\Re u\rightarrow+\infty}y_0\ii(u)\cdot \pa_u
T^s(u,\tau)=0 & \text{\quad (for the stable manifold)}.
\label{eq:AsymptCondFuncioGeneradora:st}
\end{align}
One can easily see that for $\mu=0$, the solution of equation \eqref{def:HJ2} is
just
\begin{equation}\label{def:T0}
T_0(u) =4\frac{e^u}{\cosh u},
\end{equation}
which is the generating function of the unperturbed separatrix and has singularities  at $u=\pm i\pi/2$. Nevertheless, as we have
explained in Section \ref{sec:Melnikov}, the term $\Psi(u)$ has singularities at
$u=\rr_\pm, \ol \rr_\pm$. In particular, $u=\rr_-, \ol\rr_-$ are closer to the
real axis than $u=\pm i\pi/2$, $u=\rr_+$ and $u=\ol\rr_+$. Therefore, to study the exponentially small splitting of
separatrices for this kind of systems, one has to look for parameterizations
of the invariant manifolds, namely solutions of equation \eqref{def:HJ2}, in
complex domains up to a distance of order $\OO(\eps)$ of the singularities
$u=\rr_-, \ol\rr_-$. To this end, we define the domains
\begin{equation}\label{def:DominisOuter}
\begin{split}
\dps D^{u}_{\kk}&=\left\{u\in\CC; |\Im u|<-\tan \beta_1\left(\Re u-\Re
\rr_-\right)+\Im \rr_--\kk\eps\right\}\\
\dps D^{s}_{\kk}&=\left\{u\in\CC; |\Im u|<\tan \beta_1\left(\Re u-\Re
\rr_-\right)+\Im \rr_--\kk\eps\right\},
\end{split}
\end{equation}
where $\beta_1>0$ is an angle independent of $\eps$ and $\kappa >0$
is such that $\Im\rr_--\kappa \eps >0$ (see Figure \ref{fig:Outer}). 

\begin{figure}[h]
\begin{center}
\psfrag{a}{$i\frac{\pi}{2}$}\psfrag{b}{$-i\frac{\pi}{2}$}\psfrag{Ds}{$D^s_\kk$}\psfrag{Du}{$D^u_\kk$}\psfrag{r1}{$\rr_-$}\psfrag{r2}{$\ol\rr_-$}
\includegraphics[height=6cm]{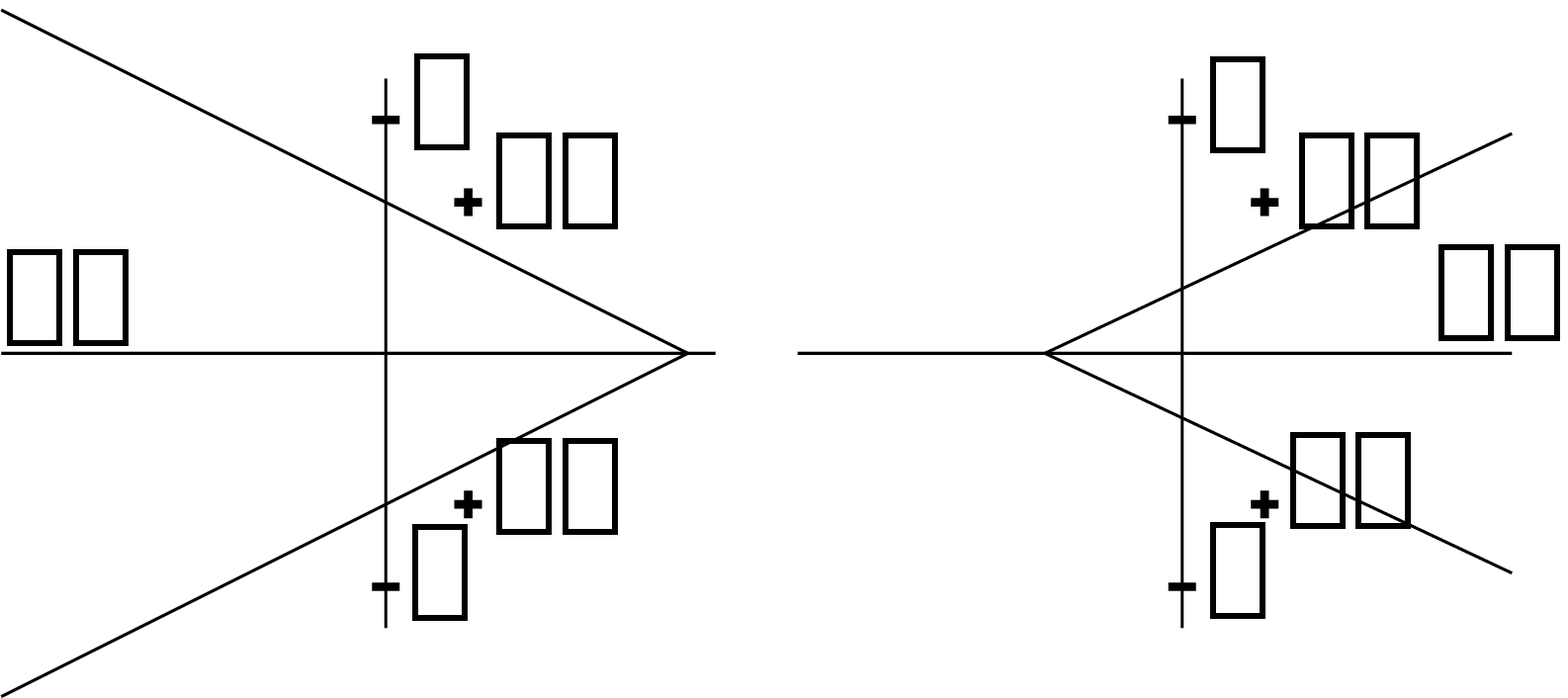}
\end{center}
\caption{\figlabel{fig:Outer}The domains $D^u_\kk$ and $D^s_\kk$ defined in \eqref{def:DominisOuter}.}
\end{figure}

 Note that this is a
significant difference in comparison with the papers dealing with the
exponentially small splitting of separatrices problem for algebraic or
trigonometric polynomial perturbations (see \cite{DelshamsS97, Gelfreich97,
BaldomaF04, BaldomaFGS11}). In these cases, one needs to look for the
parameterizations up to a distance of order $\OO(\eps)$ of the singularities of the unperturbed
separatrix. Nevertheless, in this paper, since the perturbation changes the
location of the singularities of the perturbed invariant manifolds so
drastically, one has to study the invariant manifolds close to these new
singularities.

Let us recall that when $\al$ is close to 1, namely, when the analyticity strip of the Hamiltonian is very narrow, the singularity $\rr_-$ is given by
\[
 \rr_-=\ln (1+\sqrt{2})+i(1-\al)^{1/2}+\OO(1-\al).
\] 
Therefore, $\Im\rr_-\ll \eps$ if $1-\al\ll\eps^2$. In this case, as the Melnikov function is not exponentially small (see Corollary \ref{coro:NonExpSmall}), we can use a classical perturbative approach to prove its validity. We leave this easier yet surprising case to Section \ref{sec:SketchProof:NonExp}. Thus, from now on, we assume the condition \eqref{cond:ExpSmall} and we proceed to prove Theorem \ref{th:Main}. This condition ensures that the intersection $D^u_\kk\cap D^s\kk\cap \RR$ contains a fundamental domain, since it is of size $\OO(\sqrt{1-\al})$. Note that if $\al$ is bounded away from 1, one can choose the angle $\beta_1$ so that the domain  $D^u_\kk\cap D^s\kk\cap \RR$  contains the point $u=0$ (which corresponds to the section $x=\pi$).  If $\al=1-C\eps^r$ with $r\in (0,2)$, the domain  $D^u_\kk\cap D^s\kk\cap \RR$  contains the point $u=\ln(1+\sqrt{2})$ (which corresponds to the section $x=3\pi/2$). One could change slightly the domain so that it would contain $u=0$ also in this latter case. Nevertheless, in this case one would have to take $\beta_1\sim \eps^{r/2}$. This would lead to worse estimates which would require a stronger condition on $\eta$ for the Melnikov function to be the true first order of the splitting.

The next theorem gives the existence of the invariant manifolds in
the domains $D^{\ast}_{\kk}$ with $\ast=u,s$ defined in
\eqref{def:DominisOuter}. We state the results for the unstable
invariant manifold. The stable one has analogous properties.

\begin{theorem}\label{th:ExistenceManifolds}
Let us fix $\kk_1>0$. Then, there exists $\eps_0>0$ such
that for $\eps\in(0,\eps_0)$, $\alpha\in (0,1)$, $\mu\in B(\mu_0)$, if
$\frac{\eps+\sqrt{\alpha}}{1-\al}\eps^{\eta-1}$ is small enough and \eqref{cond:ExpSmall} is satisfied, the
Hamilton-Jacobi equation \eqref{def:HJ1} has  a unique
(modulo an additive constant) real-analytic solution in
$D^{u}_{\kk_1}\times\TT_\sigma$ satisfying the asymptotic
condition \eqref{eq:AsymptCondFuncioGeneradora:uns}.

Moreover, there exists a real constant $b_1>0$ independent of $\eps$, $\mu$ and $\al$, such that for $(u,\tau)\in
D^{u}_{\kk_1}\times\TT_\sigma$,
\[
\begin{split}
\left|\pa_u T^u(u,\tau)-\pa_u T_0(u)\right|&\leq
\frac{b_1|\mu|\eps^{\eta-1}}{(\eps+\sqrt{\al})(1-\al)}\\
\left|\pa^2_u T^u(u,\tau)-\pa^2_u T_0(u)\right|&\leq
\frac{b_1|\mu|\eps^{\eta-2}}{(\eps+\sqrt{\al})(1-\al)}.
\end{split}
\]
Furthermore, if we define the half Melnikov function
\begin{equation}\label{def:HalfMelnikov:uns}
\MM^u(u,\tau)=-4\int_{-\infty}^{0} 
\frac{\sinh(u+s)\cosh(u+s)}{\left(\cosh^2(u+s)-2\alpha\sinh(u+s)\right)^2}
\sin\left(\tau+\frac{s}{\eps}\right)ds,
\end{equation}
the generating function $T^u$  satisfies that, for $(u,\tau)\in
D^{u}_{\kk_1}\times\TT_\sigma$,
\begin{equation}\label{def:HalfMelnikov:uns:cota}
\left|\pa_u T^u(u,\tau)-\pa_u T_0(u)-\mu\eps^\eta\MM^u(u,\tau)\right|\leq
b_1|\mu|^2\frac{\eps^{2\eta-2}}{(1-\al)^2}.
\end{equation}
\end{theorem}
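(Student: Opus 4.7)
The plan is to reduce the Hamilton-Jacobi equation \eqref{def:HJ2} to a fixed point equation for the perturbed part of the generating function, and then apply a Banach contraction argument in a suitable space of analytic functions on $D^u_{\kk_1}\times\TT_\sigma$.

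First, I would write $T^u = T_0 + T_1$, where $T_0(u)=4e^u/\cosh u$ is the unperturbed generating function defined in \eqref{def:T0}. Since $T_0$ satisfies the unperturbed equation and $\frac{\cosh^2 u}{4}\pa_u T_0 = 1$, the new unknown $T_1$ must solve
\begin{equation*}
\LL T_1 + \frac{\cosh^2 u}{8}(\pa_u T_1)^2 = -\mu\eps^\eta \Psi(u)\sin\tau,
\qquad
\LL:=\eps\ii\pa_\tau + \pa_u,
\end{equation*}
with the asymptotic condition \eqref{eq:AsymptCondFuncioGeneradora:uns}. The transport operator $\LL$ is inverted along its characteristics $s\mapsto(u+s,\tau+s/\eps)$ by
\begin{equation*}
(\LL\ii g)(u,\tau)=\int_{-\infty}^{0} g\bigl(u+s,\tau+s/\eps\bigr)\,ds,
\end{equation*}
which selects the unstable branch. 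In this way the equation becomes the fixed point problem
\begin{equation*}
T_1=\FF(T_1):=\LL\ii\!\left[-\mu\eps^\eta\Psi(u)\sin\tau-\tfrac{\cosh^2 u}{8}(\pa_u T_1)^2\right],
\end{equation*}
and observe that the first iterate $\pa_u \LL\ii[-\mu\eps^\eta \Psi\sin\tau]=\mu\eps^\eta \MM^u$ reproduces the half Melnikov function \eqref{def:HalfMelnikov:uns}.

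Next I would introduce a Banach space $\XX$ of real-analytic functions on $D^u_{\kk_1}\times\TT_\sigma$ with a weighted sup-norm
\begin{equation*}
\|g\|_{\XX}=\sup_{(u,\tau)\in D^u_{\kk_1}\times\TT_\sigma}\bigl|(u-\rr_-)(u-\ol\rr_-)\bigr|^{m}\,|g(u,\tau)|,
\end{equation*}
chosen with the weight $m$ matching the order of singularity of $\pa_u T_1$ at the two singularities $\rr_\pm$ closest to the real axis (so that $\pa_u T_1$ is allowed to blow up like $(u-\rr_-)^{-m}$, and analogously at $\ol\rr_-$). The two technical lemmas to establish are: (i) the independent term $\LL\ii[\mu\eps^\eta\Psi\sin\tau]$ has $\|\cdot\|_{\XX}$-norm bounded by $C|\mu|\eps^\eta/(1-\al)$, obtained by Fourier-expanding $\sin\tau$ and deforming the contour of integration in each harmonic up to a distance $\OO(\eps)$ from $\rr_-$ (respectively $\ol\rr_-$), picking up an extra factor $\OO(\eps/(\eps+\sqrt{\al}))$ from the distance to the singularity that accounts for the denominator $(\eps+\sqrt{\al})$ in the bound of the theorem; and (ii) the nonlinear operator $g\mapsto\LL\ii[\tfrac{\cosh^2 u}{8}(\pa_u g)^2]$ is Lipschitz with constant $\OO(\|g\|_{\XX}/(1-\al))$ on a ball of $\XX$. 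Combining (i) and (ii), the Banach fixed point theorem then applies on a ball of radius $\rho=C|\mu|\eps^{\eta-1}/[(\eps+\sqrt{\al})(1-\al)]$ provided the contraction constant $\rho/(1-\al)\sim |\mu|\eps^{\eta-1}(\eps+\sqrt{\al})\ii/(1-\al)^2$ is small, which is guaranteed by the hypothesis. The resulting fixed point $T_1\in\XX$ yields the bounds on $\pa_u T^u-\pa_u T_0$ and $\pa_u^2 T^u-\pa_u^2 T_0$ by Cauchy estimates in $u$. Finally, one round of the fixed-point iteration gives
\begin{equation*}
\pa_u T^u-\pa_u T_0-\mu\eps^\eta \MM^u=-\pa_u\LL\ii\!\left[\tfrac{\cosh^2 u}{8}(\pa_u T_1)^2\right],
\end{equation*}
whose $\XX$-norm is controlled by $C|\mu|^2\eps^{2\eta-2}/(1-\al)^2$, yielding \eqref{def:HalfMelnikov:uns:cota}.

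The main obstacle will be the careful choice of the weighted norm and the verification of estimate (i): the inverse $\LL\ii$ must be estimated after pushing the $s$-contour into the complex plane through the pencil of characteristics foliating $D^u_{\kk_1}$, in such a way that the path stays at distance $\gtrsim\kk_1\eps$ from both $\rr_-$ and $\ol\rr_-$. This is where the geometry of the domains $D^u_{\kk_1}$ (in particular the opening angle $\beta_1$) is used, and where the dependence on both $\eps+\sqrt{\al}$ (distance to the singularity along the contour) and $1-\al$ (size of $\Psi$ itself through the residue at $\rr_-$) appears. The compatibility with the widening of the range up to $\al\ll 1-\eps^2$, meaning that the fundamental domain of real $u$ in $D^u_{\kk_1}\cap D^s_{\kk_1}$ is non-empty, is ensured by \eqref{cond:ExpSmall} and the choice of $\beta_1$. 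The stable case is symmetric, with $\LL\ii$ replaced by an integral from $0$ to $+\infty$ and the domain $D^s_{\kk_1}$.
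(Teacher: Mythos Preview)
Your overall strategy---writing $T^u=T_0+Q$, inverting the transport operator $\LL_\eps=\eps^{-1}\pa_\tau+\pa_u$ along the unstable characteristics, and running a Banach fixed point argument in a weighted space---is exactly the route the paper takes. However, several of the technical choices you make differ from the paper's in ways that matter, and at least one of them would cause the argument to fail as written.

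\medskip
\textbf{The weight.} You propose $\bigl|(u-\rr_-)(u-\ol\rr_-)\bigr|^m$. The paper instead uses the degree-six weight
\[
(u-\rr_-)^2(u-\rr_+)(u-\ol\rr_-)^2(u-\ol\rr_+),
\]
together with an exponential factor $e^{-2u}$ for $\Re u<-U$ (needed to encode the asymptotic condition \eqref{eq:AsymptCondFuncioGeneradora:uns}). Including the factors $(u-\rr_+)(u-\ol\rr_+)$ is not cosmetic: since $\Psi'(u)=\beta(u)$ behaves like \eqref{def:beta:singularity}, the paper's weight makes $\|\Psi'\|$ uniformly bounded via the elementary inequality $\bigl|(u-i\pi/2)/(u-\rr_+)\bigr|\leq 1$. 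With your weight and, say, $m=2$, one is left with $\bigl|(u-i\pi/2)/(u-\rr_+)^2\bigr|$, which is of order $\al^{-1/2}$ near the tip of $D^u_{\kk_1}$ when $\al$ is small; the first-iterate bound would then not be uniform in $\al$, and the theorem covers the full range $\al\in(0,1)$.

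\medskip
\textbf{The fixed point variable and the $\eps$-gain.} The paper sets up the contraction for $h=\pa_u Q$ directly (operator $\ol\FF^u(h)=\pa_u\GG_\eps\FF(h)$), not for $Q$. The crucial gain of one power of $\eps$ on the first iterate comes from the fact that $\sin\tau$ has zero mean: Lemma~\ref{lemma:Existence:PropietatsGInfty} gives $\|\GG_\eps g\|_\sigma\leq K\eps\|g\|_\sigma$ when $\langle g\rangle=0$, yielding $\|\ol\FF^u(0)\|_\sigma\leq \tfrac{b_1}{2}|\mu|\eps^{\eta+1}$. Your proposed mechanism (``deforming the contour \ldots\ up to a distance $\OO(\eps)$ from $\rr_-$'') is not how the paper proceeds and is not needed; the operator $\GG_\eps$ is estimated directly, Fourier mode by Fourier mode, along real characteristics.

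\medskip
\textbf{The Lipschitz bookkeeping.} Your contraction constant comes out as $\rho/(1-\al)\sim |\mu|\eps^{\eta-1}(\eps+\sqrt\al)^{-1}(1-\al)^{-2}$, which is \emph{not} controlled by the hypothesis that $\eps^{\eta-1}\frac{\eps+\sqrt\al}{1-\al}$ is small (the factor $(\eps+\sqrt\al)$ is on the wrong side). In the paper the Lipschitz constant is
\[
\mathrm{Lip}\,\ol\FF^u\leq K|\mu|\,\eps^{\eta-1}\,\frac{\eps+\sqrt\al}{1-\al},
\]
obtained by combining the norm bound on $h_1+h_2$ with the pointwise inequalities $\bigl|(u-i\pi/2)/(u-\rr_-)\bigr|\leq 1+K\sqrt\al/\eps$ and $\bigl|(u-\rr_-)^{-1}(u-\ol\rr_-)^{-2}\bigr|\leq K\eps^{-1}(1-\al)^{-1}$. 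This is precisely the place where the smallness hypothesis of the theorem is used, and your version does not reproduce it.

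\medskip
In short: right architecture, but the specific norm (all four singularities, asymmetric exponents, Fourier in $\tau$, exponential tail at $-\infty$) and the zero-mean mechanism for the $\eps$-gain are what make the estimates close with the stated hypothesis; your simplifications would not.
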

The proof of this theorem is deferred to Section \ref{sec:ExistenceManifolds}.
The parameterization of the stable manifold has analogous properties. In
particular, we can define 
\begin{equation}\label{def:HalfMelnikov:st}
\MM^s(u,\tau)=4\int_{0}^{+\infty} 
\frac{\sinh(u+s)\cosh(u+s)}{\left(\cosh^2(u+s)-2\alpha\sinh(u+s)\right)^2}
\sin\left(\tau+\frac{s}{\eps}\right)ds,
\end{equation}
and then, for $(u,\tau)\in
D^{s}_{\kk_1}\times\TT_\sigma$,
\begin{equation}\label{def:HalfMelnikov:st:cota}
\left|\pa_u T^s(u,\tau)-\pa_u T_0(u)-\mu\eps^\eta\MM^s(u,\tau)\right|\leq
|\mu|^2\frac{\eps^{2\eta-2}}{(1-\al)^2}.
\end{equation}
Note that the Melnikov function defined in \eqref{def:Melnikov} is simply
\[
\MM(u,\tau;\eps,\al) =\MM^s(u,\tau)-\MM^u(u,\tau).
\]
From now on, we omit the dependence on $\eps$ and $\al$ of the Melnikov function $\MM$, which we denote by $\MM(u,\tau)$.

Once we know the existence of parameterizations of the invariant manifolds, the
next step is to study their difference. To this end, we define
\begin{equation}\label{def:DiffManifolds}
 \Delta (u,\tau)=T^s(u,\tau)-T^u(u,\tau).
\end{equation}
This function is defined in $ R_{\kk_1}\times\TT_\sigma$, where $ R_{\kk}$, for any $\kk>0$, is the romboidal domain
\begin{equation}\label{def:DominiInterseccio}
\dps R_{\kk}=D^{u}_{\kk}\cap D^{s}_{\kk}.
\end{equation}

Subtracting equation \eqref{def:HJ2} for both $T^s$ and $T^u$, one can easily
see that $\Delta\in\mathrm{Ker}\wt\LL_\eps$ for 
\begin{equation}\label{def:OperadorAnulador}
 \wt\LL_\eps=\eps\ii\pa_\tau+\left(\frac{\cosh^2u}{8}\left(\pa_uT^s(u,
\tau)+\pa_u T^u(u,\tau)\right)\right)\pa_u.
\end{equation}
Since Theorem \ref{th:ExistenceManifolds} ensures that the perturbed invariant
manifolds are well approximated by the unperturbed separatrix in the domains
$D^u_{\kk_1}$ and $D^s_{\kk_1}$, we know that the operator $\wt \LL_\eps$ is
close to the constant coefficients operator
\begin{equation}\label{def:Lde}
 \LL_\eps=\eps\ii\pa_\tau+\pa_u.
\end{equation}
As it is well known, any function which is defined for $(u,\tau)\in\{u\in
\CC:\Re u=a,\Im u\in [-r_0,r_0]\}\times\TT_\sigma$ and belongs to the kernel of
$\LL_\eps$, it is defined in  the strip $\{|\Im u|<r_0\}\times\TT_\sigma$ and
has exponentially small bounds for real values of the variables. This fact is
summarized in the next lemma, whose proof follows the same lines as the one of
the slightly different Lemma 3.10 of \cite{GuardiaOS10}.

\begin{lemma}\label{lemma:Lazutkin}
Let us consider a function $\zeta(u,\tau)$ analytic in
$(u,\tau)\in\{u\in \CC:\Re u=a,\Im u\in (-r_0,r_0)\}\times\TT_\sigma$ which is
solution of
$\LL_\eps\zeta=0$. Then, $\zeta$ can be extended analytically to
$\{|\Im u|<r_0\}\times\TT_{\sigma}$ and its mean value
\[
\langle\zeta\rangle=\frac{1}{2\pi}\int_0^{2\pi}\zeta(u,\tau)d\tau
\]
does not depend on $u$. Moreover, for $r\in(0,r_0)$ and
$\sigma'\in(0,\sigma)$, we define
\begin{equation}\label{def:M_r}
M_r=\max_{(u,\tau)\in \left[
-ir,ir\right]\times\overline{\TT}_{\sigma'}}\left|\pa_u\zeta(u,\tau)\right|.
\end{equation}
Then, provided $\eps$ is small enough, for $(u,\tau)\in \RR\times\TT_{\sigma'}$ the following inequality holds
\[
\left|\pa_u\zeta(u,\tau)\right|\leq 4  M_re^{-\frac{r}{\eps}}.
\]
\end{lemma}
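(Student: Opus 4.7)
The plan is to exploit the characteristic structure of $\LL_\eps = \eps\ii\pa_\tau + \pa_u$: a function lies in its kernel if and only if it has the form $\zeta(u,\tau) = g(\tau - u/\eps)$ for some $2\pi$-periodic $g$. Equivalently, after Fourier-expanding $\zeta(u,\tau) = \sum_{k\in\ZZ}\zeta^{[k]}(u)\,e^{ik\tau}$, the equation $\LL_\eps\zeta = 0$ decouples into the scalar ODEs $(\zeta^{[k]})'(u) = -ik\eps\ii\zeta^{[k]}(u)$, whose solutions are $\zeta^{[k]}(u) = C_k\,e^{-iku/\eps}$ for constants $C_k\in\CC$. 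The $k=0$ mode is identically constant, so $\langle\zeta\rangle = C_0$ is independent of $u$, giving the first claim of the lemma.

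Next I would extend $\zeta$ to the full horizontal strip. Boundedness of $\zeta$ on any compact subsegment $\{a+is:\,|s|\le r\}\times\overline{\TT}_{\sigma'}$ with $r<r_0$ and $\sigma'<\sigma$ gives, via $|\zeta^{[k]}(u)|\le\sup_\tau|\zeta(u,\tau)|$ together with the explicit form $\zeta^{[k]}(u)=C_k e^{-iku/\eps}$ evaluated at $\Im u = r\,\mathrm{sign}(k)$, an estimate $|C_k|\le M\,e^{-|k|r/\eps}$. Hence the series $\sum_k C_k e^{ik(\tau-u/\eps)}$ converges absolutely and analytically on $\{|\Im u|<r_0\}\times\TT_\sigma$, coincides with $\zeta$ on the original vertical segment by uniqueness of Fourier coefficients, and thereby supplies the required analytic continuation.

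For the exponentially small estimate on $\pa_u\zeta$ I would pass to the characteristic variable $z=\tau - u/\eps$. Writing $\pa_u\zeta(u,\tau)=h(z)$ with $h(z)=-\eps\ii g'(z)=\sum_{k\ne 0}d_k e^{ikz}$, the hypothesis $M_r=\max|\pa_u\zeta|$ on $[-ir,ir]\times\overline{\TT}_{\sigma'}$ translates into $|h|\le M_r$ on the horizontal strip $|\Im z|\le r/\eps+\sigma'$ swept by $z$ as $(u,\tau)$ range over that set. Shifting the contour in the Fourier integral $d_k=\frac{1}{2\pi}\int_0^{2\pi}h(z)e^{-ikz}\,dz$ to the horizontal line $\Im z=\mathrm{sign}(k)(r/\eps+\sigma')$ yields
\[
|d_k|\le M_r\,e^{-|k|(r/\eps+\sigma')}.
\]

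Finally, for $u\in\RR$ and $\tau\in\TT_{\sigma'}$ the characteristic variable satisfies $|\Im z|\le\sigma'$, so $|e^{ikz}|\le e^{|k|\sigma'}$, and summing the resulting geometric series gives
\[
|\pa_u\zeta(u,\tau)|\le\sum_{k\ne 0}|d_k|\,e^{|k|\sigma'}\le 2M_r\sum_{k\ge 1}e^{-kr/\eps}=\frac{2M_r\,e^{-r/\eps}}{1-e^{-r/\eps}}\le 4M_r\,e^{-r/\eps},
\]
valid once $\eps$ is small enough that $e^{-r/\eps}\le 1/2$. The whole argument is essentially bookkeeping once the characteristic form $g(\tau - u/\eps)$ is exploited; the only mild technical point is verifying that each contour shift takes place inside the strip of analyticity of $h$, which is precisely what the analytic continuation established in the second step guarantees.
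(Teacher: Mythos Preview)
Your argument is correct and is precisely the standard Fourier/contour-shift proof that the paper has in mind (the paper itself defers to Lemma~3.10 of \cite{GuardiaOS10}, which proceeds in the same way). One cosmetic slip: the contour for the $k$-th Fourier coefficient of $h$ should be shifted to $\Im z=-\mathrm{sign}(k)(r/\eps+\sigma')$ rather than $+\mathrm{sign}(k)(r/\eps+\sigma')$, since $|e^{-ikz}|=e^{k\Im z}$; the stated bound $|d_k|\le M_r e^{-|k|(r/\eps+\sigma')}$ is nonetheless the correct one.
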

To apply this lemma to study the difference between the invariant manifolds, we 
follow \cite{Sauzin01} (see also \cite{GuardiaOS10}). Namely, we look for a change of
variables which conjugates $\wt\LL_\eps$ in \eqref{def:OperadorAnulador} with $\LL_\eps$ in
\eqref{def:Lde}.

\begin{theorem}\label{th:Canvi}
Let us consider the constant $\kk_1$ defined in Theorem
\ref{th:ExistenceManifolds} and let us fix any $\kk_3>\kk_2>\kk_1$. Then, there
exists $\eps_0>0$ such that for $\eps\in (0,\eps_0)$ and $\alpha \in (0,1)$
satisfying \eqref{cond:ExpSmall} and that $\eps^{\eta-1}\frac{\eps+\sqrt{\al}}{1-\al}$ is small enough, there exists a
real-analytic function $\CCC$ defined in $R_{\kk_2}\times\TT_\sigma$ such that
the change
\begin{equation}\label{def:CanviConjugador}
 (u,\tau)=(v+\CCC(v,\tau),\tau)
\end{equation}
 conjugates the operators $\wt\LL_\eps$ and $\LL_\eps$ defined in
\eqref{def:OperadorAnulador} and \eqref{def:Lde} respectively. Moreover, for $(v,\tau)\in
R_{\kk_2}\times\TT_\sigma$, $v+\CCC(v,\tau) \in R_{\kk_1}$ and there exists a
constant $b_2>0$ such that
\[
\begin{split}
 \left|\CCC(v,\tau)\right|&\leq b_2|\mu|\eps^{\eta}\frac{\eps+\sqrt{\al}}{1-\al}\\
 \left|\pa_v\CCC(v,\tau)\right|&\leq b_2|\mu|\eps^{\eta-1}
\frac{\eps+\sqrt{\al}}{1-\al}.
\end{split}
\]
Furthermore, $(u,\tau)=(v+\CCC(v,\tau),\tau)$ is invertible and its inverse is
of the form $ (v,\tau)=(u+\VV(u,\tau),\tau)$ where $\VV$ is a function defined
for $(u,\tau)\in R_{\kk_3}\times\TT_\sigma$, which satisfies 
\[
 \left|\VV(u,\tau)\right|\leq b_2|\mu|\eps^\eta\frac{\eps+\sqrt{\al}}{1-\al}
\]
and that $u+\VV(u,\tau)\in R_{\kk_2}$ for $(u,\tau)\in
R_{\kk_3}\times\TT_\sigma$.
\end{theorem}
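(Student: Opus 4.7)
The plan is to rewrite the conjugation condition as a fixed-point equation for $\CCC$ and to solve it by a Banach contraction argument on a suitable space of real-analytic functions.

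\textbf{Step 1 (Equation for $\CCC$).} Denote the coefficient of $\wt\LL_\eps$ by
\[
\AAA(u,\tau)=\frac{\cosh^2 u}{8}\bigl(\pa_u T^s(u,\tau)+\pa_u T^u(u,\tau)\bigr),
\]
so that $\wt\LL_\eps=\eps\ii\pa_\tau+\AAA\,\pa_u$. Writing $g(v,\tau)=f(v+\CCC(v,\tau),\tau)$ and computing $\pa_v g,\pa_\tau g$ in terms of $\pa_u f,\pa_\tau f$, the identity $\wt\LL_\eps f=\LL_\eps g$ for all $f$ is equivalent to
\[
\LL_\eps\CCC(v,\tau)=\AAA\bigl(v+\CCC(v,\tau),\tau\bigr)-1.
\]
Since $\pa_u T_0=4/\cosh^2 u$, the unperturbed coefficient equals $1$, and Theorem \ref{th:ExistenceManifolds} together with the boundedness of $\cosh^2 u$ on $R_{\kk_1}$ gives $\AAA-1=\OO\!\left(|\mu|\eps^{\eta-1}/((\eps+\sqrt\al)(1-\al))\right)$ uniformly on $R_{\kk_1}\times\TT_\sigma$.

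\textbf{Step 2 (Right inverse of $\LL_\eps$).} The operator $\LL_\eps=\eps\ii\pa_\tau+\pa_u$ is constant along the characteristics $s\mapsto(v+s,\tau+s/\eps)$. Decomposing in Fourier series in $\tau$, I would define a right inverse $\GG$ by integrating each mode $g^{[k]}(v)e^{ik\tau}$ from a point on the boundary of $R_{\kk_1}$ reached along the characteristic back to $(v,\tau)$; for $k\neq 0$ an integration by parts against the oscillating factor $e^{ikv'/\eps}$ gives the standard factor $\eps/|k|$, and for $k=0$ one integrates in $v$ across a segment of uniformly bounded length in the rhombus. The condition $\kk_2>\kk_1$ ensures that such characteristic segments, starting in $R_{\kk_2}$, stay in $R_{\kk_1}$, so the resulting operator satisfies an estimate of the form
\[
\|\GG g\|_{R_{\kk_2}\times\TT_\sigma}\leq K\bigl(\eps\,\|g\|_{R_{\kk_1}\times\TT_\sigma}+\eps\bigr\|\text{oscillating part}\bigr\|\bigr),
\]
with $K$ independent of $\eps$ and $\al$.

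\textbf{Step 3 (Fixed point).} Setting $\FF[\CCC](v,\tau)=\GG[\AAA(v+\CCC(v,\cdot),\cdot)-1](v,\tau)$, I would work in the closed ball
\[
\BB_{r}=\bigl\{\CCC\text{ real-analytic on }R_{\kk_2}\times\TT_\sigma:\ \|\CCC\|\leq r\bigr\},\qquad r=2Kb_1|\mu|\eps^\eta\tfrac{\eps+\sqrt\al}{1-\al}.
\]
Self-mapping of $\FF$ on $\BB_r$ follows from Step~1 combined with the bound of Step~2. For contractivity, the mean value theorem gives
\[
\bigl|\AAA(v+\CCC_1,\tau)-\AAA(v+\CCC_2,\tau)\bigr|\leq \sup|\pa_u\AAA|\cdot|\CCC_1-\CCC_2|,
\]
and the derivative $\pa_u\AAA$ is controlled by $\pa_u^2 T^{u,s}$, bounded in Theorem \ref{th:ExistenceManifolds} by $b_1|\mu|\eps^{\eta-2}/((\eps+\sqrt\al)(1-\al))$. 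Together with $\GG$ this yields a Lipschitz constant of order $|\mu|\eps^{\eta-1}(\eps+\sqrt\al)/(1-\al)$, strictly less than $1$ under the smallness hypothesis of the theorem. The unique fixed point $\CCC$ satisfies the stated bound on $\|\CCC\|$; the corresponding bound on $\pa_v\CCC$ comes from Cauchy estimates after working originally on a slightly larger rhombus and then restricting to $R_{\kk_2}$. The inclusion $v+\CCC(v,\tau)\in R_{\kk_1}$ for $(v,\tau)\in R_{\kk_2}\times\TT_\sigma$ follows from $|\CCC|\ll(\kk_2-\kk_1)\eps$.

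\textbf{Step 4 (Invertibility).} Because $|\pa_v\CCC|$ is small, the map $\Phi(v,\tau)=(v+\CCC(v,\tau),\tau)$ is a near-identity perturbation. A standard fixed-point argument (equivalent to the analytic implicit function theorem) produces an inverse of the form $(v,\tau)=(u+\VV(u,\tau),\tau)$ on a slightly smaller domain $R_{\kk_3}\times\TT_\sigma$ with $\kk_3>\kk_2$; the same smallness of $|\pa_v\CCC|$ gives $\|\VV\|$ of the same order as $\|\CCC\|$, and the inclusion $u+\VV(u,\tau)\in R_{\kk_2}$ for $(u,\tau)\in R_{\kk_3}\times\TT_\sigma$.

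\textbf{Main obstacle.} The delicate point is the construction of the right inverse $\GG$ in Step~2: one must choose characteristic paths that remain inside the rhomboidal domain $R_{\kk_1}$, with length uniformly bounded as $\al\to 1^-$, so as to obtain operator norms for $\GG$ that are independent of $\eps$ and $\al$ and provide enough gain to close the contraction with exactly the prefactor $(\eps+\sqrt\al)/(1-\al)$ appearing in the statement. Once this is established, the remaining argument is a routine contraction.
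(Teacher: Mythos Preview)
Your overall strategy---reduce to the PDE $\LL_\eps\CCC=\AAA(v+\CCC,\tau)-1$, build a right inverse of $\LL_\eps$ by integrating Fourier modes along characteristics, and run a contraction---is exactly the paper's strategy. The inverse $\wt\GG_\eps$ you describe in Step~2 is the one defined in Lemma~\ref{lemma:Canvi:Operador}, and Step~4 is the same argument the paper gives. So the skeleton is right.

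The gap is in Step~3. From Theorem~\ref{th:ExistenceManifolds} you only know the \emph{sup-norm} bound $|\AAA-1|\leq K|\mu|\eps^{\eta-1}/((\eps+\sqrt\al)(1-\al))$, and even granting an $\eps$-gain from $\GG$ on the oscillatory part, this yields at best
\[
|\CCC|\ \lesssim\ \frac{|\mu|\,\eps^{\eta}}{(\eps+\sqrt\al)(1-\al)},
\]
which has $(\eps+\sqrt\al)$ in the \emph{denominator}, not the numerator. This is off from the stated bound by a factor $(\eps+\sqrt\al)^{-2}$, and that factor is exactly what is needed later: in the proof of Theorem~\ref{th:CotaExpPetita} the product $|\pa_u^2\Delta|\cdot|\CCC|$ must collapse to $|\mu|^2\eps^{2\eta-2}/(1-\al)^2$, which only happens because the $(\eps+\sqrt\al)^{-1}$ in $|\pa_u^2\Delta|$ cancels against the $(\eps+\sqrt\al)$ in $|\CCC|$.

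The missing ingredient is the weighted-norm structure. The paper does not use the sup bound on $\pa_uQ^{u,s}$; it uses the finer information from Proposition~\ref{prop:ExistenceT1:PtFix}, namely $\|\pa_uQ^{u,s}\|_\sigma\leq b_1|\mu|\eps^{\eta+1}$ in the norm~\eqref{def:Norma:Existencia} carrying the weight $(u-\rr_-)^2(u-\rr_+)(u-\ol\rr_-)^2(u-\ol\rr_+)$. When you multiply by $\cosh^2u$, which vanishes to second order at $u=\pm i\pi/2$, the inequalities \eqref{eq:Cota:Singularitat:0}--\eqref{eq:Cota:Singularitat} convert one factor $(u-i\pi/2)/(u-\rr_-)$ into $1+K\sqrt\al/\eps$, and this is precisely what places $(\eps+\sqrt\al)$ in the numerator. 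The paper carries this out in the weighted spaces $\XX_{n,\sigma}$ of~\eqref{def:Norma:Canvi}, and in addition splits $\ol\JJ(0)=\BB_1+\BB_2$ so that $\BB_1$ has zero $\tau$-average (hence $\wt\GG_\eps$ gains $\eps$) while $\BB_2$ is already quadratically small. Your ``main obstacle'' (uniform bounds for $\GG$) is in fact routine once the right norms are in place; the real obstacle is getting the prefactor right, and that requires the weighted norms.
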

The proof of this theorem is deferred to Section \ref{sec:Canvi}.

Once we have obtained the change of variables $\CCC$, we are able to prove the validity of
the Melnikov prediction. We prove it by bounding
$\pa_u\Delta(u,\tau)-\MM(u,\tau)$, where $\Delta$ and $\MM$ are the functions
defined in \eqref{def:DiffManifolds} and \eqref{def:Melnikov} respectively. 

As a first step,  we
consider 
\[\pa_v\left(\Delta(v+\CCC(v,\tau),\tau)\right)-\MM(v,\tau)
\]
where $\CCC$ is the function obtained in Theorem \ref{th:Canvi}.

\begin{theorem}\label{th:CotaExpPetita}
 There exists $\eps_0>0$ and $b_3>0$ such that for any $\eps\in (0,\eps_0)$  and $\alpha \in (0,1)$
satisfying \eqref{cond:ExpSmall} and that $\eps^{\eta-1}\frac{\eps+\sqrt{\al}}{1-\al}$ is small enough, the following bound is satisfied
\[
\left|\pa_v\left(\Delta(v+\CCC(v,\tau),\tau)\right)-\MM(v,\tau)\right|\leq
b_3|\mu|^2\frac{\eps^{2\eta-2}}{(1-\al)^{2}}e^{-\dps\tfrac{\Im\rr_-}{\eps}}
\]
for $v\in
R_{\kk_2}\cap\RR$ and $\tau\in\TT$,
\end{theorem}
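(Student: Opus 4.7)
The plan is to interpret $\pa_v\bigl[\Delta(v+\CCC(v,\tau),\tau)\bigr]-\mu\eps^\eta\MM(v,\tau)$ as a function lying in $\ker\LL_\eps$ and then apply the Lazutkin-type estimate of Lemma \ref{lemma:Lazutkin} on a vertical segment reaching up to distance $\kk_2\eps$ of the singularity $\rr_-$. I read the theorem as carrying an implicit factor $\mu\eps^\eta$ in front of $\MM$, consistent with the scaling of the half-Melnikov bounds \eqref{def:HalfMelnikov:uns:cota} and \eqref{def:HalfMelnikov:st:cota}; once that is accepted, the strategy mimics the one used for polynomial perturbations in \cite{Sauzin01, GuardiaOS10}.

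First I build a kernel-compatible ``potential'' for the Melnikov function. Since $\MM\in\ker\LL_\eps$ (differentiating under the integral in \eqref{def:Melnikov} turns $\pa_u+\eps\ii\pa_\tau$ into a total $s$-derivative of an integrand which decays at $s=\pm\infty$), and since $\sin\tau$ has zero $\tau$-mean, the Fourier expansion $\MM(u,\tau)=\sum_{k\neq 0} c_k e^{ik(\tau-u/\eps)}$ lets me set
\[
 N(u,\tau)=\sum_{k\neq 0}\frac{i\eps\,c_k}{k}\,e^{ik(\tau-u/\eps)},
\]
which satisfies $\pa_u N=\MM$ and $\LL_\eps N=0$. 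Define
\[
 \Xi(v,\tau)=\Delta(v+\CCC(v,\tau),\tau)-\mu\eps^\eta N(v,\tau).
\]
By Theorem \ref{th:Canvi} the first summand belongs to $\ker\LL_\eps$, hence $\Xi\in\ker\LL_\eps$ on $R_{\kk_2}\times\TT_\sigma$, and $\pa_v\Xi=\pa_v\bigl[\Delta(v+\CCC,\tau)\bigr]-\mu\eps^\eta\MM(v,\tau)$ is exactly the quantity to be bounded.

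Next I estimate $\pa_v\Xi$ on the vertical segment $\{\Re v=\Re\rr_-,\ |\Im v|\leq \Im\rr_- -\kk_2\eps\}$, which lies inside $R_{\kk_2}\subset R_{\kk_1}$ because $\kk_2>\kk_1$. The chain rule gives
\[
 \pa_v\Xi=\bigl[\pa_u\Delta(v+\CCC,\tau)-\mu\eps^\eta\MM(v+\CCC,\tau)\bigr]+\mu\eps^\eta\bigl[\MM(v+\CCC,\tau)-\MM(v,\tau)\bigr]+\pa_v\CCC\cdot\pa_u\Delta(v+\CCC,\tau).
\]
The first bracket is bounded by combining \eqref{def:HalfMelnikov:uns:cota} with its stable analogue \eqref{def:HalfMelnikov:st:cota}, yielding $\OO\bigl(|\mu|^2\eps^{2\eta-2}/(1-\al)^2\bigr)$. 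For the remaining two terms I use the bounds $|\CCC|,|\pa_v\CCC|\leq b_2|\mu|\eps^{\eta-1}(\eps+\sqrt{\al})/(1-\al)$ from Theorem \ref{th:Canvi} together with Cauchy estimates on $\MM$ and on $\pa_u\Delta$ transferred from $R_{\kk_1}$ to the slightly smaller segment, each differentiation costing a factor $1/\eps$; these losses are absorbed by the smallness of $\CCC$ provided $\eps^{\eta-1}(\eps+\sqrt\al)/(1-\al)$ is small enough, and the total contribution still fits into $M:=C\,|\mu|^2\eps^{2\eta-2}/(1-\al)^2$.

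Finally Lemma \ref{lemma:Lazutkin} applied to $\Xi\in\ker\LL_\eps$ with $a=\Re\rr_-$ and $r=\Im\rr_- -\kk_2\eps$ gives, for real $v\in R_{\kk_2}\cap\RR$ and $\tau\in\TT$,
\[
 |\pa_v\Xi(v,\tau)|\leq 4M\,e^{-r/\eps}\leq b_3\,|\mu|^2\,\frac{\eps^{2\eta-2}}{(1-\al)^2}\,e^{-\Im\rr_-/\eps},
\]
the factor $e^{\kk_2}$ being absorbed into $b_3$. The main obstacle is the middle step: the change of variables $\CCC$ contaminates the clean half-Melnikov estimates, so the derivatives $\pa_u\MM$ and $\pa_u^2 T^{u,s}$ must be propagated from $R_{\kk_1}$ to the tighter domain $R_{\kk_2}$ via Cauchy's formula, and one has to verify that the $1/\eps$ and $1/(1-\al)$ losses from each differentiation are genuinely compensated by the sizes of $\CCC$ and $\pa_v\CCC$ given in Theorem \ref{th:Canvi}; this is precisely why the result is stated at the level of the first derivative $\pa_v$ rather than of the potential $\Xi$ itself.
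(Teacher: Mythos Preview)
Your proof is correct and follows essentially the same route as the paper: build a primitive of $\mu\eps^\eta\MM$ in $\ker\LL_\eps$, subtract it from $\Delta\circ(\mathrm{Id}+\CCC)$ to get a function in $\ker\LL_\eps$, bound its $v$-derivative on the complex segment using the half-Melnikov estimates \eqref{def:HalfMelnikov:uns:cota}, \eqref{def:HalfMelnikov:st:cota} and the bounds on $\CCC,\pa_v\CCC$ from Theorem \ref{th:Canvi}, and then invoke Lemma \ref{lemma:Lazutkin}. The paper organizes the bookkeeping slightly differently---it splits at the potential level as $\Phi_1=\Delta(v+\CCC)-\Delta(v)$ and $\Phi_2=\Delta(v)-L(v)$ rather than expanding $\pa_v\Xi$ via the chain rule into your three pieces---but the two decompositions use the same estimates on $\pa_u\Delta$, $\pa_u^2\Delta$, $\CCC$ and $\pa_v\CCC$, and both yield the bound $K|\mu|^2\eps^{2\eta-2}/(1-\al)^2$ on the complex strip before the exponential gain from Lemma \ref{lemma:Lazutkin}. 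Your reading of the implicit $\mu\eps^\eta$ in front of $\MM$ is also consistent with the paper's own proof.
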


\begin{proof}
First, we define  the Melnikov potential  $L$ (see \cite{DelshamsG00}), namely a function such that $\pa_v L(v,\tau)=\MM(v,\tau)$,
and we study $\Delta(v+\CCC(v,\tau),\tau)-L(v,\tau)$.

As $\Delta\in \mathrm{Ker}\wt\LL_\eps$, where $\wt\LL_\eps$ is the operator in \eqref{def:OperadorAnulador}, by Theorem \ref{th:Canvi},
the function $\Phi(v,\tau)=\Delta(v+\CCC(v,\tau),\tau)-L(v,\tau)\in\mathrm{Ker}\LL_\eps$, where
$\LL_\eps$ is the operator defined in \eqref{def:Lde}. Therefore we can apply
 Lemma \ref{lemma:Lazutkin}. To this end, we have to bound $\pa_v\Phi(v,\tau)$
in the domain $R_{\kk_2}\times\TT_\sigma$. We split $\Phi$ as
$\Phi(v,\tau)=\Phi_1(v,\tau)+\Phi_2(v,\tau)$ where
\[
\begin{split}
 \Phi_1(v,\tau)&= \Delta\left(v+\CCC(v,\tau),\tau\right)-\Delta (v,\tau)\\
\Phi_2(v,\tau)&= \Delta(v,\tau)-L (v,\tau).
\end{split}
\]
where $\Delta$ is the functions defined in
\eqref{def:DiffManifolds}
and $L$ is the Melnikov potential.

To bound $\pa_v\Phi_1$, one has to take into account that 
$\Delta=(T^s-T_0)-(T^u-T_0)$ and therefore, it is enough to consider the bounds
obtained in Theorems \ref{th:ExistenceManifolds} and \ref{th:Canvi} to obtain
\[
|\pa_v\Phi_1(v,\tau)|\leq K|\mu|\eps^{2\eta-2}\frac{1}{(1-\al)^2}
\] for $(v,\tau)\in
R_{\kk_2}\times\TT_\sigma$. For the second term, it is enough to use bounds
\eqref{def:HalfMelnikov:uns:cota} and \eqref{def:HalfMelnikov:st:cota} to obtain
\[
|\pa_v\Phi_2(v,\tau)|\leq K|\mu|\eps^{2\eta-2}\frac{1}{(1-\al)^2}\] for $(v,\tau)\in
R_{\kk_2}\times\TT_\sigma$.
 
Therefore, we have that \[|\pa_v\Phi(v,\tau)|\leq K|\mu|\eps^{2\eta-2}\frac{1}{(1-\al)^{2}}\] for
$(v,\tau)\in R_{\kk_2}\times\TT_\sigma$ and then, it is enough to apply Lemma
\ref{lemma:Lazutkin} to finish the proof of Theorem \ref{th:CotaExpPetita}.
\end{proof}

From this result and the exponential smallness of $\MM$, given in Proposition \ref{prop:Melnikov}, and considering the inverse change $(v,\tau)=(u+\VV(u,\tau),\tau)$ obtained in Theorem \ref{th:Canvi}, it is
straightforward to obtain an exponentially small bound for 
$\pa_u\Delta(u,\tau)-\MM(u,\tau)$. It is stated in the next
corollary, whose proof is straightforward.
\begin{corollary}\label{coro:CotaExpPetita}
 There exists $\eps_0>0$ and $b_4>0$ such
that for any $\eps\in (0,\eps_0)$  and $\alpha \in (0,1)$
satisfying \eqref{cond:ExpSmall} and that $\eps^{\eta-1}\frac{\eps+\sqrt{\al}}{1-\al}$ is small enough, the following bound is satisfied
\[
\left|\pa_u\Delta (u,\tau)-\MM(u,\tau)\right|\leq
b_4|\mu|^2\frac{\eps^{2\eta-2}}{(1-\al)^{2}}e^{-\dps\tfrac{\Im\rr_-}{\eps}}
\]
for $u\in R_{\kk_3}\cap\RR$ and $\tau\in\TT$.
\end{corollary}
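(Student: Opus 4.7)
The plan is to transfer the estimate of Theorem \ref{th:CotaExpPetita}, which controls $\pa_v\bigl(\Delta(v+\CCC(v,\tau),\tau)\bigr)-\MM(v,\tau)$ on $R_{\kk_2}\cap\RR$, to an estimate of $\pa_u\Delta(u,\tau)-\MM(u,\tau)$ on $R_{\kk_3}\cap\RR$ by composing with the inverse change $(v,\tau)=(u+\VV(u,\tau),\tau)$ given in Theorem \ref{th:Canvi}. Writing $\wt\Delta(v,\tau)=\Delta(v+\CCC(v,\tau),\tau)$ and using that $v+\CCC(v,\tau)$ and $u+\VV(u,\tau)$ are mutual inverses, we have the identity $\wt\Delta(u+\VV(u,\tau),\tau)=\Delta(u,\tau)$. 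Differentiating in $u$ yields
\[
\pa_u\Delta(u,\tau)=\pa_v\wt\Delta\bigl(u+\VV(u,\tau),\tau\bigr)\,\bigl(1+\pa_u\VV(u,\tau)\bigr).
\]

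Next I would substitute into Theorem \ref{th:CotaExpPetita} evaluated at the point $v=u+\VV(u,\tau)\in R_{\kk_2}$, which is legitimate because the theorem ensures that $u+\VV(u,\tau)$ lies in $R_{\kk_2}$ whenever $u\in R_{\kk_3}$. This produces the decomposition
\[
\pa_u\Delta(u,\tau)-\MM(u,\tau)=\bigl[\MM(u+\VV,\tau)-\MM(u,\tau)\bigr]+\pa_u\VV\cdot\MM(u+\VV,\tau)+\bigl(1+\pa_u\VV\bigr)E(u+\VV,\tau),
\]
where $E(v,\tau)=\pa_v\wt\Delta(v,\tau)-\MM(v,\tau)$ is the exponentially small remainder from Theorem \ref{th:CotaExpPetita}. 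Since Theorem \ref{th:Canvi} gives $|\pa_u\VV|\ll 1$ (via the inverse function theorem applied to $\CCC$), the last term is immediately dominated by $2b_3|\mu|^2\eps^{2\eta-2}(1-\al)^{-2}e^{-\Im\rr_-/\eps}$.

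The remaining work is to show that the two bracketed terms fit into the same exponentially small bound. For the first term, the mean value theorem gives $|\MM(u+\VV,\tau)-\MM(u,\tau)|\leq|\VV|\sup|\pa_u\MM|$, where the supremum is taken over a real segment lying in $R_{\kk_2}$. I would obtain $|\pa_u\MM|\leq K\eps^{-1}|\MM|$ by a Cauchy estimate in the complex strip of width of order $\eps$ around the real axis, using the analyticity of $\MM$ in $u$ inherited from the integrand in \eqref{def:Melnikov}. Combined with the bound $|\VV|\leq b_2|\mu|\eps^\eta(\eps+\sqrt{\al})/(1-\al)$ from Theorem \ref{th:Canvi} and the size of $\MM$ from Proposition \ref{prop:Melnikov}, the smallness hypothesis $\eps^{\eta-1}(\eps+\sqrt{\al})/(1-\al)\ll 1$ ensures that $|\VV\cdot\pa_u\MM|$ is absorbed into the target bound. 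The second term $\pa_u\VV\cdot\MM(u+\VV,\tau)$ is handled analogously using $|\pa_u\VV|\leq K|\mu|\eps^{\eta-1}(\eps+\sqrt{\al})/(1-\al)$, which follows from $\pa_u\VV=-\pa_v\CCC/(1+\pa_v\CCC)$ and the corresponding bound on $\pa_v\CCC$.

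The main obstacle, albeit a minor one, is the bookkeeping: the size of $\MM$ is different in each of the three regimes of $\al$ singled out in Proposition \ref{prop:Melnikov}, so one must verify case by case that the product of the Cauchy estimate for $\pa_u\MM$, the bounds on $\VV$ and $\pa_u\VV$ and the exponential factor $e^{-\Im\rr_-/\eps}$ indeed combine to give the uniform right-hand side $b_4|\mu|^2\eps^{2\eta-2}(1-\al)^{-2}e^{-\Im\rr_-/\eps}$ in every regime covered by the hypothesis \eqref{cond:ExpSmall}. This follows because the smallness hypothesis $\eps^{\eta-1}(\eps+\sqrt{\al})/(1-\al)\ll 1$ is precisely what is needed, and the whole computation simply mirrors the one already carried out for $\pa_v\wt\Delta$ in the proof of Theorem \ref{th:CotaExpPetita}.
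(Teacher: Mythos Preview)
Your approach is correct and coincides with what the paper has in mind: the paper simply declares the corollary ``straightforward'' from Theorem~\ref{th:CotaExpPetita}, the exponential smallness of $\MM$ in Proposition~\ref{prop:Melnikov}, and the inverse change of Theorem~\ref{th:Canvi}, and your decomposition into the three terms $[\MM(u+\VV,\tau)-\MM(u,\tau)]+\pa_u\VV\cdot\MM(u+\VV,\tau)+(1+\pa_u\VV)E(u+\VV,\tau)$ is precisely the computation this entails. One minor simplification: instead of invoking a Cauchy estimate for $\pa_u\MM$, you may use directly that $\MM\in\ker\LL_\eps$ (equivalently, $\MM$ depends only on $\tau-u/\eps$), which gives $\pa_u\MM=-\eps^{-1}\pa_\tau\MM$ and hence the factor $\eps^{-1}$ without any complex detour.
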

From this corollary and using that, by Proposition \ref{prop:Melnikov} and Corollary \ref{coro:MidaMelnikov}, the Melnikov function has non-degenerate zeros, one can see that if $\al$ is bounded away from 1 and satisfies that $\eps^{\eta-1}(\eps+\sqrt{\al})$ is small enough, the manifolds intersect transversally and their distance satisfy the desired asymptotic formula. This finishes the proof of Theorem \ref{th:Main} (see \cite{BaldomaFGS11}).

\subsubsection{The invariant manifolds: Proof of Theorem
\ref{th:ExistenceManifolds}}\label{sec:ExistenceManifolds}
Since the proof for both invariant manifolds is analogous, we only deal with the
unstable case. We look
for a solution of equation \eqref{def:HJ2} satisfying the asymptotic condition
\eqref{eq:AsymptCondFuncioGeneradora:uns}. We look for it as a perturbation
of the unperturbed separatrix $T_0$ in \eqref{def:T0} and therefore we work
with 
\begin{equation}\label{def:T1}
Q(u,\tau) = T (u,\tau) - T_0(u).
\end{equation}
Replacing $T$ in equation \eqref{def:HJ2}, it is straightforward
to see that the equation for $Q$ reads
\begin{equation}\label{eq:Existence:T1}
 \LL_\eps Q=\FF(\pa_u Q,u,\tau)
\end{equation}
where $\LL_\eps$ is the operator defined in \eqref{def:Lde} and
\begin{equation}\label{def:Existence:OperadorRHS}
 \FF(h,u,\tau)=-\frac{\cosh^2u}{8}h^2-\mu\eps^\eta\Psi(u)\sin\tau.
\end{equation}
where $\Psi(u)$ is the function defined in \eqref{def:Psi}.

We devote the rest of the section to obtain a solution of equation
\eqref{eq:Existence:T1} which is defined  in  $D^u_{\kk}\times\TT_\sigma$ and
satisfies the asymptotic condition \eqref{eq:AsymptCondFuncioGeneradora:uns}.

We start defining a norm for functions defined in the domain $D^u_{\kk}$ with
$\kk>0$. Since we want to capture their behavior both as $\Re u\rightarrow
-\infty$, namely its exponential decay, and for $u$ close to $u=ia$, we consider
weighted norms with different weights. For this reason we consider $U>0$ to
divide  $D^u_{\kk}$ by the vertical line $\Re u=-U$.  Then, given $\kk>0$ and an
analytic function $h:D^u_\kk\rightarrow \CC$, we consider
\begin{equation}\label{def:Norma:Existencia}
\begin{split}
 \|h\|=&\sup_{u\in D^u_\kk\cap\{\Re u<-U\}}\left|e^{-2 u}h(u)\right|\\
&+\sup_{u\in
D^u_\kk\cap\{\Re
u>-U\}}\left|(u-\rr_-)^2(u-\rr_+)(u-\ol\rr_-)^2(u-\ol\rr_+)h(u)\right|.
\end{split}
\end{equation}
For analytic functions $h:D^u_\kk\times\TT_\sigma\rightarrow \CC$,
 we consider the corresponding Fourier norm
\[
 \|h\|_\sigma=\sum_{k\in\ZZ}\left\|h^{[k]}\right\| e^{|k|\sigma}.
\]
and the following Banach  space
\begin{equation}\label{def:Existence:banach}
 \EE_{\kk,\sigma}=\left\{h:D^u_\kk\times\TT_\sigma\rightarrow \CC;
\text{real-analytic}, \|h\|_\sigma<\infty\right\}.
\end{equation}

To obtain the solutions of equation \eqref{eq:Existence:T1}, we need to solve an
equation of the form
$\LL_\eps h=g$, where $\LL_\eps$ is the differential operator
defined in~\eqref{def:Lde}. Note that  $\LL_\eps$ is invertible in
$\EE_{\kk,\sigma}$. It
turns out that its inverse is $\GG_\eps$ defined by
\begin{equation}\label{def:operadorGInfty}
\GG_\eps (h)(u,\tau)=\int_{-\infty}^0h(u+s,\tau+\eps\ii s)\,ds.
\end{equation}

\begin{lemma}\label{lemma:Existence:PropietatsGInfty}
The operator $\GG_\eps$  in \eqref{def:operadorGInfty}  satisfies the
following properties.
\begin{enumerate}
\item $\GG_\eps$ is linear from $\EE_{\kk,\sigma}$ to
itself, commutes with $\pa_u$ and satisfies
$\LL_\eps\circ\GG_\eps=\mathrm{Id}$.
\item If $h\in\EE_{\kk,\sigma}$, then
\[
\left\|\GG_\eps(h)\right\|_{\sigma}\leq
K\|h\|_{\sigma}.
\]
Furthermore, if $\langle h\rangle=0$, then
\[
\left\|\GG_\eps(h)\right\|_{\sigma}\leq
K\eps\|h\|_{\sigma}.
\]
\item If $h\in\EE_{\kk,\sigma}$, then $\pa_u\GG_\eps(h)\in
\EE_{\kk,\sigma}$ and
\[
\left\|\pa_u\GG_\eps(h)\right\|_{\sigma}\leq
K\|h\|_{\sigma}.
\]
\end{enumerate}
\end{lemma}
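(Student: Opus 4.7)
The plan is to verify the three parts in order, using the identity from (1) to bootstrap the derivative bound in (3) rather than differentiating under the integral sign (which would take us out of $\EE_{\kk,\sigma}$).

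For (1), linearity is immediate from the definition. The identity $\LL_\eps\GG_\eps = \Id$ follows from the chain-rule observation $\frac{d}{ds}h(u+s,\tau+\eps^{-1}s) = \LL_\eps h(u+s,\tau+\eps^{-1}s)$, so
\[
\LL_\eps\GG_\eps(h)(u,\tau) = \int_{-\infty}^0\frac{d}{ds}\bigl[h(u+s,\tau+\eps^{-1}s)\bigr]\,ds = h(u,\tau),
\]
the lower limit vanishing because $|h(u+s,\cdot)|\le \|h\|_\sigma e^{2\Re(u+s)}\to 0$ by the exponential weight in \eqref{def:Norma:Existencia}. Commutation of $\GG_\eps$ with $\pa_u$ follows by differentiation under the integral sign, which is justified by absolute convergence.

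For (2) I work harmonic by harmonic, writing $h = \sum h^{[k]}(u)e^{ik\tau}$. The key geometric observation is that for every $u\in D^u_\kk$ the horizontal ray $\{u+s:s\le 0\}$ is contained in $D^u_\kk$, so the integrand never approaches the singularities $\rr_\pm,\ol\rr_\pm$. Splitting the integral at $\Re(u+s)=-U$ and applying the exponential and polynomial weights of \eqref{def:Norma:Existencia} on their respective regions yields the uniform bound $\|\GG_\eps(h)\|_\sigma\le K\|h\|_\sigma$. When $\langle h\rangle=0$ only $k\ne 0$ harmonics survive, and a single integration by parts in $s$ gives
\[
\int_{-\infty}^0 h^{[k]}(u+s)e^{iks/\eps}\,ds = \frac{\eps}{ik}h^{[k]}(u)-\frac{\eps}{ik}\int_{-\infty}^0 (h^{[k]})'(u+s)e^{iks/\eps}\,ds,
\]
with the boundary term at $-\infty$ killed by the exponential weight. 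The derivative $(h^{[k]})'$ is estimated pointwise via Cauchy's formula on a disk whose radius scales with the distance to the nearest singularity, recovering the polynomial weight up to a constant. Integrating along the path and summing over $k\ne 0$ produces the factor $\eps$.

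For (3) I avoid differentiating $\GG_\eps(h)$ under the integral, since $\pa_u h$ carries an extra power at each singularity and falls outside $\EE_{\kk,\sigma}$. Instead I use (1) to write $\pa_u\GG_\eps(h) = h - \eps^{-1}\pa_\tau\GG_\eps(h)$. The first term has norm $\|h\|_\sigma$. For the second, observe that $\pa_\tau$ annihilates the mean value, so the improved estimate from (2) applies harmonic by harmonic, giving $\|\GG_\eps(h^{[k]}e^{ik\tau})\|_\sigma\le K\eps|k|^{-1}\|h^{[k]}\|e^{|k|\sigma}$ for each $k\ne 0$. The prefactor $|k|/\eps$ produced by $\eps^{-1}\pa_\tau$ cancels the gain $\eps/|k|$, and summing over $k$ yields $\|\eps^{-1}\pa_\tau\GG_\eps(h)\|_\sigma\le K\|h\|_\sigma$.

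The main technical difficulty is the $\eps$-gain in (2): the integration by parts produces $(h^{[k]})'$, whose polynomial weight has a worse order at each pole than the weight defining $\EE_{\kk,\sigma}$. The remedy is a Cauchy estimate on a disk of radius proportional to the distance to the closest pole, which restores the original weight up to a constant. This forces a small loss of the domain parameter $\kk$, consistent with the hierarchy $\kk_1<\kk_2<\kk_3$ used throughout Theorems \ref{th:ExistenceManifolds} and \ref{th:Canvi}.
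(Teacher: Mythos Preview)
The paper does not give a self-contained proof here; it simply refers to Lemma~5.5 of \cite{GuardiaOS10}. Your sketch covers the right ideas and would be a reasonable reconstruction, but one step needs more care.

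Your treatment of item~(1) and of the first bound in item~(2) is correct: the horizontal ray $\{u+s:s\le0\}$ lies in $D^u_\kk$, and splitting at $\Re(u+s)=-U$ handles the two weights. For the $\eps$-gain you integrate by parts and then control $(h^{[k]})'$ by a Cauchy estimate on a disk whose radius scales with the distance to the nearest singularity. The difficulty is that for points $u+s$ close to $\partial D^u_\kk$ but not close to any singularity, such a disk need not lie in $D^u_\kk$, and you concede a loss in $\kk$. Your appeal to the hierarchy $\kk_1<\kk_2<\kk_3$ does not patch this: that nesting concerns the rhomboidal domains $R_\kk$ of Theorem~\ref{th:Canvi}, whereas the present lemma is invoked inside the fixed-point argument of Proposition~\ref{prop:ExistenceT1:PtFix} on a \emph{single} space $\EE_{\kk_0,\sigma}$, where a loss would prevent $\ol\FF^u$ from being a self-map under iteration. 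The standard way to get the factor $\eps/|k|$ without any loss is to deform the horizontal integration path to a ray making an angle $\theta\in(0,\beta_1)$ with the real axis, into the half-plane where $e^{iks/\eps}$ decays; the shape of $D^u_\kk$ (opening angle $\beta_1$) is chosen precisely so that such a ray stays in the domain, and the gain then comes directly from $\int_0^\infty e^{-|k|r\sin\theta/\eps}\,dr$, with no derivative of $h^{[k]}$ required.

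Your route to item~(3), writing $\pa_u\GG_\eps(h)=h-\eps^{-1}\pa_\tau\GG_\eps(h)$ and using the harmonic-wise gain, is correct and elegant, and may well differ from the argument in \cite{GuardiaOS10}. Just note that it needs the sharpened bound $\|(\GG_\eps h)^{[k]}\|\le K\eps|k|^{-1}\|h^{[k]}\|$, stronger than the aggregate $\eps$-gain stated in item~(2); this is exactly what the contour tilt (or your integration by parts, once the domain issue is fixed) delivers.
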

\begin{proof}
It follows the same lines as the proof of Lemma 5.5 in
\cite{GuardiaOS10}.
\end{proof}

Once we have obtained an inverse of the operator $\LL_\eps$ defined in
\eqref{def:Lde} we can obtain solutions of equation \eqref{eq:Existence:T1}
using a fixed point argument. Then, Theorem \ref{th:ExistenceManifolds} is a
straightforward consequence of the following proposition.

\begin{proposition}\label{prop:ExistenceT1:PtFix}
Let us fix $\kk_1>0$. There exists $\eps_0>0$ such that for any $\eps\in
(0,\eps_0)$ and $\al\in (0,1)$ satisfying \eqref{cond:ExpSmall} and that $\frac{\eps+\sqrt{\alpha}}{1-\al}\eps^{\eta-1}$
is small enough, there exists a function $Q$ defined in
$D^u_{\kk_1}\times\TT_\sigma$ such that $\pa_u Q\in \EE_{\kk_1,\sigma}$ is a
fixed point of the operator
\begin{equation}\label{def:ExistenceT1:Operator}
 \ol \FF^u(h)=\pa_u\GG_\eps\FF(h),
\end{equation}
where $\GG_\eps$ and $\FF$ are the operators defined in
\eqref{def:operadorGInfty} and \eqref{def:Existence:OperadorRHS} respectively.
Furthermore, there exists a constant $b_1>0$ such that,
\[
\begin{split}
 \left\|\pa_uQ\right\|_\sigma&\leq b_1|\mu|\eps^{\eta+1},\\
 \left\|\pa^2_uQ\right\|_\sigma&\leq b_1|\mu|\eps^{\eta}.
\end{split}
\]
Moreover, if we consider the half Melnikov function defined in
\eqref{def:HalfMelnikov:uns}, we have that
\begin{equation}\label{eq:CotaGeneradoraMenysMelnikov}
 \left\| \pa_u Q-\mu\eps^\eta\MM^u\right\|_\sigma\leq K|\mu|^2\eps^{2\eta}
\frac{\eps+\sqrt{\al}}{1-\al}.
\end{equation}
\end{proposition}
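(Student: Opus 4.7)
The plan is to treat \eqref{eq:Existence:T1} as a fixed point equation $\pa_u Q = \ol\FF^u(\pa_u Q)$ in the Banach space $\EE_{\kk_1,\sigma}$ via a contraction mapping argument. The crucial observation, which makes the Melnikov bound \eqref{eq:CotaGeneradoraMenysMelnikov} fall out essentially for free, is that the zero-th iterate is already the Melnikov term. Indeed, a direct computation using $x_0'(u)=y_0(u)=2/\cosh u$ gives $\Psi'(u)=4\,\beta(u)$ with $\beta$ the function in \eqref{def:MelnikovIntegrand}, and hence
\[
\ol\FF^u(0)=\pa_u\GG_\eps\bigl(-\mu\eps^\eta\Psi(u)\sin\tau\bigr)=-\mu\eps^\eta\!\int_{-\infty}^{0}\!\!\Psi'(u+s)\sin\!\left(\tau+\tfrac{s}{\eps}\right)ds=\mu\eps^\eta\MM^u(u,\tau).
\]
So the scheme centered at $h_0=0$ automatically produces both the solution and its distance to the Melnikov prediction.

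First I would estimate $\|\mu\eps^\eta\MM^u\|_\sigma$. Since $\MM^u$ is an oscillatory integral with frequency $1/\eps$ and $\Psi$ is holomorphic on $D^u_{\kk_1}$, a single integration by parts in the variable $s$ in \eqref{def:HalfMelnikov:uns} produces a boundary term $4\eps\beta(u)\cos\tau$ and a tail of the same order. The weighted norm \eqref{def:Norma:Existencia} is tailored so that $\beta(u)$ itself has finite norm: the weight $(u-\rr_-)^2(u-\rr_+)(u-\ol\rr_-)^2(u-\ol\rr_+)$ absorbs the double poles of $\beta$ at $\rr_\pm,\ol\rr_\pm$ in the domain, and the $e^{2u}$ decay of $\beta$ as $\Re u\to-\infty$ matches the $e^{-2u}$ weight. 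Combined with items 2 and 3 of Lemma~\ref{lemma:Existence:PropietatsGInfty}, this yields $\|\mu\eps^\eta\MM^u\|_\sigma\leq b_1|\mu|\eps^{\eta+1}$, with the $\al$-dependent constants tracked through $b_1$.

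Next I would verify that $\ol\FF^u$ is a contraction on the ball $B(\rho)\subset\EE_{\kk_1,\sigma}$ of radius $\rho=2b_1|\mu|\eps^{\eta+1}$. For $h_1,h_2\in B(\rho)$ one has
\[
\ol\FF^u(h_1)-\ol\FF^u(h_2)=-\tfrac{1}{8}\,\pa_u\GG_\eps\!\left(\cosh^2 u\,(h_1+h_2)(h_1-h_2)\right),
\]
so Lemma~\ref{lemma:Existence:PropietatsGInfty} reduces the task to bounding $\|\cosh^2u\,(h_1+h_2)(h_1-h_2)\|_\sigma$ in terms of $\rho\|h_1-h_2\|_\sigma$. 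This is the main obstacle: the product of two functions of the prescribed singularity type has poles of double the order, and the weight does not directly absorb squared poles. The factor $\cosh^2u$ is what saves the argument. Since $\cosh u$ vanishes at $\pm i\pi/2$ and $\rr_\pm$, $\ol\rr_\pm$ lie at distance $\OO(\sqrt\al)$ from $\pm i\pi/2$ (as seen in \eqref{eq:Expansio:Sing}), one obtains $|\cosh^2 u|\lesssim |u-\rr_-|^2+\al$ near $\rr_-$ and analogously near the other three singularities. Combining this with the uniform lower bound $|u-\rr_\pm|,|u-\ol\rr_\pm|\geq \kk_1\eps$ on $D^u_{\kk_1}$ and tracking the degree of each factor through the weight, one extracts a prefactor of size $(\eps+\sqrt\al)/(1-\al)$, so that the Lipschitz constant of $\ol\FF^u$ is of order $|\mu|\eps^{\eta-1}(\eps+\sqrt\al)/(1-\al)$, which is smaller than $1/2$ by hypothesis.

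Banach's theorem then delivers a unique $\pa_u Q\in B(\rho)$, which proves the first bound. The second derivative bound $\|\pa^2_u Q\|_\sigma\leq b_1|\mu|\eps^\eta$ follows from a Cauchy estimate in the imaginary direction of $D^u_{\kk_1}$ (losing one power of $\eps$, since neighboring points lie at distance $\OO(\eps)$), or equivalently by differentiating the fixed point identity once more. Finally, the Melnikov remainder \eqref{eq:CotaGeneradoraMenysMelnikov} follows from
\[
\|\pa_u Q-\mu\eps^\eta\MM^u\|_\sigma=\|\ol\FF^u(\pa_u Q)-\ol\FF^u(0)\|_\sigma\leq L\,\|\pa_u Q\|_\sigma,
\]
where $L$ is the contraction constant obtained above; multiplying these two factors gives exactly the quadratic gain $|\mu|^2\eps^{2\eta}$ with the correct $(\eps+\sqrt\al)/(1-\al)$ prefactor.
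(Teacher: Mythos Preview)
Your proposal is correct and follows essentially the same strategy as the paper: identify $\ol\FF^u(0)=\mu\eps^\eta\MM^u$, bound it via the zero-average estimate in Lemma~\ref{lemma:Existence:PropietatsGInfty}, extract the Lipschitz constant from the $\cosh^2 u$ cancellation near $\rr_\pm$ (the paper writes this as $|(u-i\pi/2)/(u-\rr_-)|\le 1+K\sqrt{\al}/\eps$ together with $|(u-\rr_-)(u-\ol\rr_-)^2|^{-1}\le K/(\eps(1-\al))$, which is equivalent to your $|\cosh^2 u|\lesssim |u-\rr_-|^2+\al$), and finish with a Cauchy estimate for $\pa_u^2 Q$. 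Two small remarks: the integration-by-parts route you sketch for the $\MM^u$ bound produces a tail $-4\eps\,\GG_\eps(\beta'\cos\tau)$ involving $\beta'$, which has third-order poles at $\rr_\pm$ and hence infinite $\|\cdot\|$-norm, so it is cleaner (as the paper does, and as your own invocation of item~2 of the Lemma already allows) to apply the zero-average bound $\|\GG_\eps(h)\|_\sigma\le K\eps\|h\|_\sigma$ directly to $\Psi'\sin\tau$; and the Cauchy step technically requires running the fixed point on a slightly larger domain $D^u_{\kk_0}$ with $\kk_0<\kk_1$ first, then shrinking.
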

\begin{proof}
Let us consider $\kk_0<\kk_1$. It is straightforward to see that $\ol\FF^u$ is
well defined from $\EE_{\kk_0,\sigma}$ to itself. We are going to prove that
there exists a constant $b_1>0$ such that $\ol\FF^u$ sends $\ol
B(b_1|\mu|\eps^{\eta+1})\subset\EE_{\kk_0,\sigma}$ to itself and is contractive
there.

Let us first consider $\ol\FF^u(0)$. From the definition of $\ol\FF^u$ in
\eqref{def:ExistenceT1:Operator}, the definition of $\FF$ in
\eqref{def:Existence:OperadorRHS} and using Lemma
\ref{lemma:Existence:PropietatsGInfty}, we have that
\begin{equation}\label{eq:PrimeraIteracio:ExistenceQ}
 \ol\FF^u(0)(u,\tau)=\pa_u\GG_\eps\FF(0)(u,
\tau)=-\mu\eps^\eta\GG_\eps\left(\Psi'(u)\sin\tau\right).
\end{equation}
To bound it, first let us point out that $\Psi'(u)=\beta(u)$ where $\beta(u)$ is
the function defined in \eqref{def:MelnikovIntegrand}. We bound each term for $\Im u\geq 0$, the other case is analogous. Using
\eqref{def:beta:singularity} and taking into account that
\begin{equation}\label{eq:Cota:Singularitat:0}
 \left|\frac{u-i\pi/2}{u-\rr_+}\right|<1, 
\end{equation}
one can easily see that $\|\Psi'(u)\|<K$. Then, taking into account that
$\sin\tau$ has zero average and applying Lemma 
 \ref{lemma:Existence:PropietatsGInfty}, there exists a
constant $b_1>0$ such that
\[
 \left\|\ol\FF^u(0)\right\|_\si\leq\frac{b_1}{2}|\mu|\eps^{\eta+1}.
\]
To bound the Lipschitz constant, let us consider  $h_1,h_2\in \ol
B(b_1|\mu|\eps^{\eta+1})\in\EE_{\kk_0,\sigma}$. To bound $\|\ol \FF^u(h_2)-\ol
\FF^u(h_1)\|_\si$, we need first the following bounds for $u\in D_{\kk_0}^u$,
\begin{equation}\label{eq:Cota:Singularitat}
\left|\frac{u-i\pi/2}{u-i\rr_-} \right|\leq
1+\left|\frac{i\pi/2-\rr_-}{u-i\rr_-} \right|\leq 1+K\frac{\sqrt{\al}}{\eps}
\end{equation}
and 
\begin{equation}\label{eq:Cota:Singularitat:2}
\left|\frac{1}{(u-i\rr_-)(u-i\ol\rr_-)^2} \right|\leq \frac{K}{\eps (1-\al)},
\end{equation}
which are a direct consequence of \eqref{eq:Expansio:Sing} and \eqref{eq:Expansio:Sing:alfa1}.

Then, it is easy to see that 
\[
\begin{split}
\left \|\ol \FF^u(h_2)-\ol \FF^u(h_1)\right\|_\si&\leq
\frac{K}{\eps(1-\al)}\left(1+\frac{\sqrt{\al}}{\eps}\right)\|h_2+h_1\|_\si\|h_2-h_1\|_\si\\
&\leq K|\mu|\frac{\eps+\sqrt{\alpha}}{1-\al}\eps^{\eta-1}\|h_2-h_1\|_\si.
\end{split}
\]
Then, using that by hypothesis $\frac{\eps+\sqrt{\alpha}}{1-\al}\eps^{\eta-1}$ is
small enough, 
\[
\mathrm{Lip}\ol\FF^u= K|\mu|\frac{\eps+\sqrt{\alpha}}{1-\al}\eps^{\eta-1}<1/2
\]
 and therefore $\ol\FF^u$ is contractive
from the ball $\ol B(b_1|\mu|\eps^{\eta+1})\subset \EE_{\kk_0,\sigma}$ into
itself, and it has a unique fixed point $h^\ast$. Moreover, since it has
exponential decay as $\Re u\rightarrow -\infty$, we can take
\[
Q(u,\tau)=\int_{-\infty}^u h^\ast (v,\tau)dv.
\]
To obtain the bound for $\pa_u^2Q$, it is enough to apply Cauchy estimates to
the nested domains $D_{\kk_1}^u\subset D_{\kk_0}^u$ (see, for instance,
\cite{GuardiaOS10}), and rename $b_1$ if necessary.

Finally, to prove \eqref{eq:CotaGeneradoraMenysMelnikov}, it is enough to point
out that $\ol\FF^u(0)=\mu\eps^\eta\MM^u$ and consider the obtained bound for the
Lipschitz constant.
\end{proof}

\subsubsection{Straightening the operator $\wt\LL_\eps$: proof of Theorem
\ref{th:Canvi}}\label{sec:Canvi}
We devote this section to prove Theorem \ref{th:Canvi}. It is a well known fact,
see for instance Lemma 6.3 of \cite{GuardiaOS10}, that looking for a change
\eqref{def:CanviConjugador} which conjugates $\wt \LL_\eps$ and $\LL_\eps$
defined in \eqref{def:OperadorAnulador} and \eqref{def:Lde} is equivalent to
looking for a function $\CCC$ solution of the equation
\[
 \LL_\eps\CCC(v,\tau)=\left.\frac{\cosh^2u}{8}\left(\pa_uT^s(u,\tau)+\pa_u
T^u(u,\tau)\right)\right|_{u=v+\CCC(v,\tau)}-1.
\]
Taking into account the definition of $T_0$ and $Q$ in \eqref{def:T0} and
\eqref{def:T1}, this equation can be written as
\begin{equation}\label{eq:EDPCanvi}
 \LL_\eps\CCC=\JJ(\CCC)
\end{equation}
where
\begin{equation}\label{def:EDPCanvi:RHS}
\JJ(h)(v,\tau)= \left.\frac{\cosh^2u}{8}\left(\pa_uQ^s(u,\tau)+\pa_u
Q^u(u,\tau)\right)\right|_{u=v+h(v,\tau)}.
\end{equation}
To look for a solution of this equation, we start by defining some norms and
Banach spaces. Given $n\in \NN$ and a function $h:R_\kk\rightarrow\CC$, we
define 
\begin{equation}\label{def:Norma:Canvi}
 \|h\|_n=\sup_{v\in R_\kk}\left|(v-\rr_-)^n(v-\ol\rr_-)^nh(u)\right|.
\end{equation}
Moreover for analytic functions
$h:R_{\kk}\times\TT_\sigma\rightarrow \CC$, we define the corresponding Fourier norm
\[
\|h\|_{n,\sigma}=\sum_{k\in\ZZ}\left\|h^{[k]}\right\|_{n}e^{|k|\sigma}
\]
and the Banach space
\[
 \XX_{n,\sigma}=\{
h:R_\kk\times\TT_\sigma\rightarrow\CC;\,\,\text{real-analytic},
\|h\|_{n,\sigma}<\infty\}.
\]

To obtain a solution of equation \eqref{eq:EDPCanvi} in the domain $R_\kk$, we need to  solve
equations of the form
$\LL_\eps h=g$, where $\LL_\eps$ is the operator defined in
\eqref{def:Lde}. To find a right-inverse of this operator in
 $\XX_{n,\sigma}$ let us consider $u_1$ the upper vertex of $R_\kk$ and  $u_0$ the left
endpoint of $R_\kk$. Then, we define the operator
$\wt\GG_\eps$ as
\begin{equation}\label{def:operador:canvi}
\wt\GG_\eps(h)(v,\tau)=\sum_{k\in\ZZ}\wt\GG_\eps(h)^{[k]}(v)e^{ik\tau},
\end{equation}
 where its Fourier coefficients are given by
\begin{align*}
\dps \wt\GG_\eps(h)^{[k]}(v)&=\int_{-v_1}^v
e^{i\frac{k}{\eps}(w-v)}h^{\left[k\right]}(w)\,dw&\textrm{ if }k<0\\
\dps \wt\GG_\eps(h)^{[0]}(v)&= \int_{v_0}^{v} h^{\left[0\right]}(w)\,dw&\\
\dps \wt\GG_\eps(h)^{[k]}(v)&= -\int^{v_1}_v
e^{i\frac{k}{\eps}(w-v)}h^{\left[k\right]}(w)\,dw&\textrm{ if }k>0.
\end{align*}
The following lemma, which is proved in \cite{GuardiaOS10} (see Lemma 8.3 of
this paper), gives some properties of this operator.

\begin{lemma}\label{lemma:Canvi:Operador}
The operator $\wt\GG_\eps$ in \eqref{def:operador:canvi} satisfies the
following properties.
\begin{enumerate}
\item If $h\in \XX_{n,\sigma}$, then
$\wt\GG_\eps(h)\in \XX_{n,\sigma}$ and
\[
\left\|\wt\GG_\eps(h)\right\|_{n,\sigma}\leq K\|h\|_{n,\sigma}.
\]
Moreover, if $\langle h\rangle=0$,
\[
\left\|\wt\GG_\eps(h)\right\|_{n,\sigma}\leq K\eps\|h\|_{n,\sigma}.
\]
\item If $h\in\XX_{n,\sigma}$ with $n>1$, then $\wt\GG_\eps(h)\in
\XX_{n-1,\sigma}$ and
\[
\left\|\wt\GG_\eps(h)\right\|_{n-1,\sigma}\leq \frac{K}{\sqrt{1-\al}}\|h\|_{n,\sigma}.
\]
\end{enumerate}
\end{lemma}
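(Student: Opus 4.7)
The plan is to work mode by mode in the Fourier expansion in $\tau$ and then assemble the final estimate using the Fourier norm. For each $k \in \ZZ$, the component $g_k := \wt\GG_\eps(h)^{[k]}$ solves the scalar ODE $g_k' + (ik/\eps) g_k = h^{[k]}$ with boundary data prescribed at the boundary of $R_\kk$: at the upper vertex $v_1$ for $k > 0$, at the lower vertex $-v_1$ for $k < 0$, and at the left endpoint $v_0$ for $k = 0$. The first step is to select integration contours inside $R_\kk$ along which the integrating factor $e^{ik(w-v)/\eps}$ is controlled. For $k > 0$ I take a path going from $v_1$ down to $v$ along which $\Im w \geq \Im v$ throughout; for $k < 0$ a path from $-v_1$ up to $v$ with $\Im w \leq \Im v$. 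Either way one has $|e^{ik(w-v)/\eps}| \leq e^{-c|k||w-v|/\eps}$ with $c>0$ depending only on the opening angles of the rhomboid.

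For statement (1), in the non-zero modes, parametrizing the contour by arc-length $t$ gives the pointwise estimate
\begin{equation*}
|g_k(v)| \leq \|h^{[k]}\|_n \int_0^{L_k} \frac{e^{-c|k|t/\eps}}{|(w(t)-\rr_-)(w(t)-\ol\rr_-)|^n}\,dt.
\end{equation*}
The exponential factor localizes the integrand near $w(0)=v$, where the denominator is comparable to $|(v-\rr_-)(v-\ol\rr_-)|$. Multiplying by the weight $|(v-\rr_-)(v-\ol\rr_-)|^n$ and performing the $t$-integral yields $\|g_k\|_n \leq (K\eps/|k|)\|h^{[k]}\|_n$. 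For $k=0$ the direct integration along the real segment from $v_0$ to $v$ gives $\|g_0\|_n \leq K\|h^{[0]}\|_n$ with constant uniform in $\eps$ and $\al$, since on the real axis the weight $|(v-\rr_-)(v-\ol\rr_-)|^n$ and the corresponding factor in the integrand are of the same order of magnitude along the integration path. Summing over $k$ against the Fourier weights $e^{|k|\sigma}$ produces the desired bound $\|\wt\GG_\eps h\|_{n,\sigma} \leq K\|h\|_{n,\sigma}$. When $\langle h\rangle = 0$ the $k=0$ term vanishes and every remaining mode carries a factor $\eps/|k|$, giving the improved $K\eps\|h\|_{n,\sigma}$.

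For statement (2) the drop in the weight power is extracted from the elementary fact that if $|h^{[k]}(w)|\leq C/|(w-\rr_-)(w-\ol\rr_-)|^n$, then a partial-fraction decomposition yields a primitive of order $|(w-\rr_-)(w-\ol\rr_-)|^{-(n-1)}$ whose coefficients are proportional to $1/|\rr_- - \ol\rr_-|$. Since $|\rr_- - \ol\rr_-| = 2\,\Im \rr_-$ and the expansion \eqref{eq:Expansio:Sing:alfa1} gives $\Im \rr_- \asymp \sqrt{1-\al}$ as $\al \to 1$ (and is bounded below when $\al$ stays away from $1$), this is the origin of the $1/\sqrt{1-\al}$ loss. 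The main technical point is to make the estimate uniform in $v \in R_\kk$, in particular when $v$ lies close to one of the singularities; I would split the contour into a portion that stays at distance comparable to $|v-\rr_-|$ from $\rr_-$ (bounded as in part (1)) and a short arc on which the reduction of one power is realized explicitly via the primitive above. Once the pointwise bound $\|g_k\|_{n-1}\leq (K/\sqrt{1-\al})\,\|h^{[k]}\|_n$ is obtained, summing over Fourier modes exactly as in statement (1) yields $\|\wt\GG_\eps h\|_{n-1,\sigma} \leq (K/\sqrt{1-\al})\,\|h\|_{n,\sigma}$. The main obstacle I anticipate is this uniform handling near the singularities, since the path must simultaneously satisfy the $\Im w$ constraint dictated by $k$ and stay inside the narrow part of $R_\kk$ near $\rr_-$; the correct choice is along the edges of the rhomboid incident to $\rr_-$, where both constraints are compatible.
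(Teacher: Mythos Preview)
The paper does not actually prove this lemma: it simply refers to Lemma~8.3 of \cite{GuardiaOS10}. Your sketch is precisely the kind of argument that reference carries out---Fourier mode by mode, with contours chosen along the edges of $R_\kk$ so that the sign of $\Im(w-v)$ matches the sign of $k$, exponential localisation producing the $\eps/|k|$ gain for $k\neq 0$, and for part~(2) the loss of one power of the weight costing exactly $1/\Im\rr_-\asymp 1/\sqrt{1-\al}$. So the approach is the same, and your outline is essentially correct.

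Two small points of precision are worth flagging. First, for the zero mode in part~(1) you restrict attention to real $v$, but the supremum in $\|\cdot\|_n$ runs over all of $R_\kk$; for complex $v$ you need to take the path from $v_0$ along the real axis and then up an edge toward $v$, and check the bound there too (the computation goes through, with the near-$\rr_-$ portion contributing $O(\eps)$ after multiplying by the weight). Second, the partial-fraction heuristic in part~(2) applies literally only to the \emph{model} integrand $((w-\rr_-)(w-\ol\rr_-))^{-n}$, not to $h^{[k]}$ itself; the rigorous version is the direct estimate you describe afterwards, splitting the contour into a piece at distance $\gtrsim \Im\rr_-$ from $\rr_-$ (where the weight ratio is harmless) and a short piece of length $\sim\Im\rr_-$ near $\rr_-$ (where integrating $|w-\rr_-|^{-n}$ from $\kk\eps$ to $\Im\rr_-$ produces the extra $(\Im\rr_-)^{-1}$). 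With those adjustments your argument matches the cited proof.
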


In next proposition we obtain a solution of equation \eqref{eq:EDPCanvi} using a fixed point argument.

\begin{proposition}\label{prop:Canvi:PtFix}
Let us consider the constant $\kk_1>0$ defined in Theorem
\ref{th:ExistenceManifolds} and let us consider any $\kk_2>\kk_1$. There exists
$\eps_0>0$ such that for any $\eps\in (0,\eps_0)$ and $\al\in (0,1)$ satisfying \eqref{cond:ExpSmall} and
that $\frac{\eps+\sqrt{\al}}{1-\al}\eps^{\eta-1}$ is small enough, there exists a function
$\CCC\in \XX_{1,\sigma}$ defined in $R_{\kk_2}\times\TT_\sigma$ such that  is a
fixed point of the operator
\begin{equation}\label{def:Canvi:Operator}
 \ol \JJ(h)=\wt\GG_\eps\JJ(h)
\end{equation}
where $\wt\GG_\eps$ and $\JJ$ are the operators defined in
\eqref{def:operador:canvi} and \eqref{def:EDPCanvi:RHS} respectively.
Furthermore, $v+\CCC(v,\tau)\in R_{\kk_1}$ for $(v,\tau)\in
R_{\kk_2}\times\TT_\sigma$ and there exists a constant $b_2>0$ such that
\[
\begin{split}
 \left\|\CCC\right\|_{1,\sigma}&\leq
b_2|\mu|\eps^{\eta+1}\frac{\sqrt{\al}+\eps}{(1-\al)^{1/2}}\\
 \left\|\pa_v\CCC\right\|_{1,\sigma}&\leq
b_2|\mu|\eps^{\eta}\frac{\sqrt{\al}+\eps}{(1-\al)^{1/2}}.
\end{split}
\]
\end{proposition}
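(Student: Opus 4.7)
The plan is a Banach contraction argument for $\ol\JJ = \wt\GG_\eps\circ\JJ$ on the closed ball $\ol B(r)\subset\XX_{1,\sigma}$ of radius $r = b_2|\mu|\eps^{\eta+1}(\sqrt{\al}+\eps)/\sqrt{1-\al}$, with $b_2$ chosen large enough at the end. The three ingredients I will combine are: (i) the weighted and pointwise bounds on $\pa_u Q^{u,s}$ and $\pa_u^2 Q^{u,s}$ given by Theorem \ref{th:ExistenceManifolds}, together with the identification of $\mu\eps^\eta\MM^{u,s}$ as the leading part via \eqref{eq:CotaGeneradoraMenysMelnikov}; (ii) the smoothing properties of $\wt\GG_\eps$ from Lemma \ref{lemma:Canvi:Operador}, notably the map $\XX_{2,\sigma}\to\XX_{1,\sigma}$ with constant $K/\sqrt{1-\al}$ and the extra factor $\eps$ for inputs of zero mean in $\tau$; and (iii) the singularity expansions \eqref{eq:Expansio:Sing}--\eqref{eq:Expansio:Sing:alfa1}, which, combined with the geometry of $R_{\kk_2}$, yield the uniform lower bounds $|v-\rr_+|\geq c(\sqrt{\al}+\eps)$ and $|v-\ol\rr_+|\geq c$ throughout $R_{\kk_2}$.

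For the zeroth iterate I would write $\pa_u Q^{u,s} = \mu\eps^\eta\MM^{u,s}+R^{u,s}$ and focus on the leading piece $\tfrac{\mu\eps^\eta \cosh^2 v}{8}(\MM^s+\MM^u)$. Since each $\MM^{u,s}$ has zero mean in $\tau$, so does this term, and its behaviour near the close singularities $\rr_\pm,\ol\rr_\pm$ is partially cancelled by the zeroes of $\cosh^2 v$ at $\pm i\pi/2$ in the small-$\al$ regime via \eqref{def:beta:singularity}. Estimating in $\XX_{2,\sigma}$ and applying the zero-mean version of Lemma \ref{lemma:Canvi:Operador} to pick up the extra $\eps$-gain leads to the desired $\XX_{1,\sigma}$ bound of order $|\mu|\eps^{\eta+1}(\sqrt{\al}+\eps)/\sqrt{1-\al}$; the remainder $R^{u,s}$, smaller by a factor $\OO(\mu\eps^{\eta-1}/(1-\al))$ which is small by hypothesis, is treated identically. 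This produces $\|\ol\JJ(0)\|_{1,\sigma}\leq (b_2/2)|\mu|\eps^{\eta+1}(\sqrt{\al}+\eps)/\sqrt{1-\al}$.

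For the Lipschitz step, given $h_1,h_2\in\ol B(r)$, the mean value theorem yields
\[
\JJ(h_1)-\JJ(h_2) = (h_1-h_2)\int_0^1 \pa_u\!\Bigl[\tfrac{\cosh^2 u}{8}(\pa_u Q^s+\pa_u Q^u)\Bigr]_{u=v+\theta h_1+(1-\theta)h_2}\,d\theta,
\]
and the smallness of $r/\eps$ keeps the evaluation points inside $D^{u,s}_{\kk_1}$, so the $\pa_u^2 Q^{u,s}$ bound of Theorem \ref{th:ExistenceManifolds} applies. Running the first-step analysis with this extra $\eps^{-1}$ from differentiation gives $\Lip(\ol\JJ)\leq K|\mu|\eps^{\eta-1}(\sqrt{\al}+\eps)/(1-\al) < 1/2$ under our hypothesis, and Banach produces the unique $\CCC\in\ol B(r)$. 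The inclusion $v+\CCC(v,\tau)\in R_{\kk_1}$ follows from the pointwise bound on $\CCC$ and the choice $\kk_2 > \kk_1$, and the estimate on $\pa_v\CCC$ comes from Cauchy estimates on nested subdomains (e.g.\ between $R_{(\kk_1+\kk_2)/2}$ and $R_{\kk_2}$), losing one $\eps^{-1}$. The main obstacle is the careful bookkeeping of powers of $\sqrt{\al}$, $\sqrt{1-\al}$ and $\eps$ across the sub-regimes of $\al$ allowed by \eqref{cond:ExpSmall}, and in particular exploiting the zero-mean structure of the leading Melnikov contribution to land on the claimed $(\sqrt{\al}+\eps)/\sqrt{1-\al}$ bound rather than the weaker $1/[(\sqrt{\al}+\eps)\sqrt{1-\al}]$ that a naive application of Lemma \ref{lemma:Canvi:Operador} would produce.
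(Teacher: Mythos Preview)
Your overall scheme---split $\partial_u Q^{u,s}$ into the half-Melnikov piece and a quadratic remainder, exploit the zero $\tau$-mean of the former, run a Banach contraction on $\ol\JJ$, and obtain $\partial_v\CCC$ by Cauchy estimates---is exactly the paper's approach. There is, however, one inaccuracy in how you treat the leading term.

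You propose to estimate $\tfrac{\mu\eps^\eta\cosh^2 v}{8}(\MM^s+\MM^u)$ in $\XX_{2,\sigma}$ and then combine the zero-mean $\eps$-gain with the $\XX_{2}\!\to\!\XX_{1}$ smoothing of Lemma~\ref{lemma:Canvi:Operador}. The lemma does not provide both simultaneously, and more importantly, estimating in $\XX_{2,\sigma}$ discards the factor $(\sqrt{\al}+\eps)$ you are after: in $\|\cdot\|_{2,\sigma}$ one only obtains $K|\mu|\eps^{\eta+1}$, and passing to $\XX_{1,\sigma}$ (by Part~2 or by embedding) gives $K|\mu|\eps^{\eta+1}/\sqrt{1-\al}$ with no $(\sqrt{\al}+\eps)$. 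This factor is not cosmetic---it is exactly what cancels the $1/(\eps+\sqrt{\al})$ in the pointwise bound of $\partial_u^2 Q$ when the paper later estimates $\partial_v\Phi_1$ in the proof of Theorem~\ref{th:CotaExpPetita}. The paper instead estimates the leading piece \emph{directly} in $\XX_{1,\sigma}$: using \eqref{eq:Cota:Singularitat:0} and \eqref{eq:Cota:Singularitat}, the zeros of $\cosh^2 v$ at $\pm i\pi/2$ convert into the factor $(1+\sqrt{\al}/\eps)\cdot(1/\sqrt{1-\al})$, yielding $\|\JJ(0)_{\mathrm{lead}}\|_{1,\sigma}\le K|\mu|\eps^{\eta}(\sqrt{\al}+\eps)/\sqrt{1-\al}$; only then is the zero-mean Part~1 of the lemma applied ($\XX_{1}\!\to\!\XX_{1}$) to gain the extra $\eps$. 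The $\XX_{2}\!\to\!\XX_{1}$ smoothing is reserved for the remainder $\BB_2$, which has no zero-mean structure. Your last paragraph suggests you attribute the $(\sqrt{\al}+\eps)$ gain to zero-mean; in fact zero-mean supplies only the extra power of $\eps$, while $(\sqrt{\al}+\eps)$ comes from \eqref{eq:Cota:Singularitat} in the $\XX_{1}$ norm.
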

\begin{proof}
It is straightforward to see that $\ol\JJ$ is well defined from $\XX_{1,\sigma}$
to itself. We are going to prove that there exists a constant $b_2>0$ such that
$\ol\JJ$ sends $\ol
B(b_2|\mu|\eps^{\eta+1}\frac{\sqrt{\al}+\eps}{(1-\al)^{1/2}})\subset\XX_{1,\sigma}$ to
itself and is contractive there.

Let us first consider $\ol\JJ(0)$. From the definition of $\ol\JJ$ in
\eqref{def:Canvi:Operator}, the definition of $\JJ$ in \eqref{def:EDPCanvi:RHS}, we have that
\[
 \ol\JJ(0)(v,\tau)=\wt\GG_\eps\JJ(0)(v,\tau)=\wt\GG_\eps\left(\frac{\cosh^2v}{8}
\left(\pa_vQ^s(v,\tau)+\pa_v Q^u(v,\tau)\right)\right).
\]
Using that $\pa_v Q^u$ is a fixed point of the operator $\ol\FF^u$ in
\eqref{def:ExistenceT1:Operator} and that $\pa_v Q^s$ is a fixed point of an
analogous operator $\ol\FF^s$, we can split  
$\ol\JJ(0)(v,\tau)=\BB_1(v,\tau)+\BB_2(v,\tau)$ with 
\[
\begin{split}
 \BB_1(v,\tau)&=\wt\GG_\eps\left(\frac{\cosh^2v}{8}\left(\ol\FF^u(0)(v,
\tau)+\ol\FF^s(0)(v,\tau)\right)\right)\\
\BB_2(v,\tau)&=\wt\GG_\eps\left(\frac{\cosh^2v}{8}\left(\ol\FF^u(\pa_v
Q^u)(v,\tau)-\ol\FF^u(0)(v,\tau)+\ol\FF^s(\pa_v
Q^s)(v,\tau)-\ol\FF^s(0)(v,\tau)\right)\right)
\end{split}
\]
To bound $\BB_1$, it is enough to recall that, in the proof
of Proposition \ref{prop:ExistenceT1:PtFix}, we have seen that 
$\|\ol\FF(0)\|_\sigma\leq K|\mu|\eps^{\eta+1}$. Then, taking into account \eqref{eq:Cota:Singularitat:0} and
\eqref{eq:Cota:Singularitat}, one can see that 
\[
 \left\|\frac{\cosh^2v}{8}\left(\ol\FF^u(0)(v,\tau)+\ol\FF^s(0)(v,
\tau)\right)\right\|_{1,\sigma}\leq
K|\mu|\eps^{\eta}\frac{\sqrt{\al}+\eps}{(1-\al)^{1/2}}.
\]
Moreover, taking into account that $\left\langle \ol\FF^{u,s} (0)\right\rangle=0$
and applying Lemma \ref{lemma:Canvi:Operador}, we have that 
\[
\|\BB_1\|_{1,\si}\leq K|\mu|\eps^{\eta+1}\frac{\sqrt{\al}+\eps}{(1-\al)^{1/2}}.
\]
For the second term, let us first point out that
\[
 \ol\FF^*(\pa_vQ^*)(v,\tau)-\ol\FF^*(0)(v,
\tau)=-\pa_v\GG_\eps\left(\frac{\cosh^2v}{8}\left(\pa_v Q^\ast\right)^2\right),\,\,\ast=u,s.
\]
Using Proposition \ref{prop:ExistenceT1:PtFix} and \eqref{eq:Cota:Singularitat:0}, one has that 
\[
 \left|\frac{\cosh^2v}{8}\left(\pa_v Q^\ast(v,\tau)\right)^2\right|\leq \frac{K|\mu|^2\eps^{2\eta+2}}{(v-\rr_-)^4(v-\ol\rr_-)^4}.
\]
Therefore, analogously to Lemma \ref{lemma:Existence:PropietatsGInfty}, one can easily see that 
\[
 \left|\pa_v\GG_\eps\left(\frac{\cosh^2v}{8}\left(\pa_v Q^\ast(v,\tau)\right)^2\right)\right|\leq \frac{K|\mu|^2\eps^{2\eta+2}}{(v-\rr_-)^4(v-\ol\rr_-)^4}.
\]
Using inequalities \eqref{eq:Cota:Singularitat} and \eqref{eq:Cota:Singularitat:2}, 
\[
\left\|
\frac{\cosh^2v}{8}\left(\ol\FF^*(\pa_vQ^*)(v,\tau)-\ol\FF^*(0)(v,
\tau)\right)\right\|_{2,\si}\leq K|\mu|^2\eta^{2\eta+2}\frac{\left(\sqrt{\al}+\eps\right)^2}{1-\al}.
\]
Then, using Lemma \ref{lemma:Canvi:Operador}, one has that 
\[
 \|\BB_2\|_{1,\sigma}\leq  K|\mu|^2\eps^{2\eta+2}\frac{\left(\sqrt{\al}+\eps\right)^2}{(1-\al)^{3/2}}.
\]
Therefore, since $\eps^{\eta-1}\frac{\sqrt{\al}+\eps}{1-\al}\ll 1$, there exists
a constant $b_2>0$ such that
\[
 \left\| \ol\JJ(0)\right\|_{1,\si}\leq
\frac{b_2}{2}|\mu|\eps^{\eta+1}\frac{\sqrt{\al}+\eps}{(1-\al)^{1/2}}.
\]
To bound the Lipschitz constant, it is enough to apply the mean value theorem,
use the bounds of $\pa_u Q^{u,s}$ and $\pa^2_u Q^{u,s}$ given in Proposition \ref{prop:ExistenceT1:PtFix} and
Lemma \ref{lemma:Canvi:Operador} to see that 
\[
 \mathrm{Lip}\leq K|\mu|\eps^{\eta-1}\frac{\sqrt{\al}+\eps}{1-\al}.
\]
Then, using that $\eps^{\eta-1}\frac{\sqrt{\al}+\eps}{1-\al}\ll 1$, the operator
$\ol\JJ$ is contractive from  $\ol
B(b_2|\mu|\eps^{\eta+1}\frac{\sqrt{\al}+\eps}{(1-\al)^{1/2}})\subset\XX_{1,\sigma}$ to
itself and it has a unique fixed point $\CCC$. Finally, to obtain a bound for
$\pa_v\CCC$ it is enough to apply Cauchy estimates reducing slightly the domain
and renaming $b_2$ if necessary.
\end{proof}

\begin{proof}[Proof of Theorem \ref{th:Canvi}]
 Once we have proved Proposition \ref{prop:Canvi:PtFix},  it only remains to
obtain the inverse change given by the function $\VV$, which is straightforward
using a fixed point argument.
\end{proof}

\subsection{Proof of Theorem \ref{th:Main:NonExp}}\label{sec:SketchProof:NonExp}
The first statement of Theorem \ref{th:Main:NonExp} is a direct consequence of Corollary \ref{coro:CotaExpPetita} taking $\al=1-C\eps^r$ with $r\in (0,2)$. Note that the condition $\eps^{\eta-1}\frac{\eps+\sqrt{\al}}{1-\al}$ becomes $\eta>r+1$. The proof of the second  and third statements, which correspond to $r\geq 2$, are considerably simpler, since we do not need to prove any exponential smallness. The first observation is that $\Im\rr_-\sim\eps^{r/2}\leq \eps$. Even though in this case it is not necessary,  we keep the analyticity properties of the parameterizations of the invariant manifolds and we work in the domains $D^u_\kk\times \TT_\sigma$ and $D^s_\kk\times \TT_\sigma$ (see \eqref{def:DominisOuter}).

\begin{theorem}\label{th:ExistenceManifolds:NonExp}
Let us fix $\kk_1>0$. Then, there exists $\eps_0>0$ such
that for $\eps\in(0,\eps_0)$, $\alpha=1-C\eps^r$ with $C>0$ and $r\geq 2$, $\mu\in B(\mu_0)$, if
$\eta-3r/2>0$, the
Hamilton-Jacobi equation \eqref{def:HJ1} has  a unique
(modulo an additive constant) real-analytic solution in
$\DD_{\kk_1}^u\times\TT_\sigma$ satisfying the asymptotic
condition \eqref{eq:AsymptCondFuncioGeneradora:uns}.

Moreover, there exists a real constant $b_4>0$ independent of $\eps$
and $\mu$, such that for $(u,\tau)\in
\DD_{\kk_1}^{u}\times\TT_\sigma$,
\[
\left|\pa_u T^u(u,\tau)-\pa_u T_0(u)\right|\leq
b_4|\mu|\eps^{\eta-3r/2}.
\]

Furthermore, for $(u,\tau)\in
\DD_{\kk_1}^{u}\times\TT_\sigma$,
the generating function $T^u$  satisfies that
\begin{equation}\label{def:HalfMelnikov:uns:cota:NonExp}
\left|\pa_u T^u(u,\tau)-\pa_u T_0(u)-\mu\eps^\eta\MM^u(u,\tau)\right|\leq
b_4|\mu|^2\eps^{2\eta-3r}
\end{equation}
where $\MM^u$ is the function defined in \eqref{def:HalfMelnikov:uns}.
\end{theorem}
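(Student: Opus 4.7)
The plan is to adapt the scheme of Theorem \ref{th:ExistenceManifolds} to the regime $\al = 1 - C\eps^r$ with $r \geq 2$, where the singularities $\rr_-, \ol\rr_-$ sit at imaginary distance $\Im\rr_- = \sqrt{C}\,\eps^{r/2} + \OO(\eps^r)$, a quantity bounded above by $\OO(\eps)$. The geometry of $D^u_{\kk_1}$ still keeps every $u$ in this domain at distance at least $\OO(\eps)$ from $\rr_-$ and $\ol\rr_-$, with the closest approach occurring at the right corner of the domain (whose boundary sits to the left of $\Re\rr_-$). I would write $Q = T^u - T_0$ and look for $\pa_u Q$ as a fixed point of the operator
\[
\ol\FF^u(h) = \pa_u \GG_\eps \FF(h), \qquad \FF(h,u,\tau) = -\tfrac{\cosh^2 u}{8}\, h^2 - \mu\eps^\eta \Psi(u) \sin\tau,
\]
acting on a Banach space analogous to \eqref{def:Existence:banach}, but with a weighted norm adapted to the new location of the singularities.

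The first iterate is $\ol\FF^u(0)(u,\tau) = \mu\eps^\eta \MM^u(u,\tau)$, i.e., precisely the half-Melnikov function. Its real-axis size is governed by the residuum estimate underlying Corollary \ref{coro:NonExpSmall}, giving $\|\ol\FF^u(0)\|_\sigma \lesssim |\mu|\eps^{\eta - 3r/2}$. The Lipschitz constant of $\ol\FF^u$ on a ball of radius $b_4|\mu|\eps^{\eta - 3r/2}$ comes from the quadratic term $-\tfrac{\cosh^2 u}{8} h^2$ and is bounded by a constant times $|\mu|\eps^{\eta - 3r/2}$, which under the hypothesis $\eta > 3r/2$ is less than $1/2$ for $\eps$ small. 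Banach's fixed point theorem then produces a unique $h^\ast = \pa_u Q$ in this ball, and $Q(u,\tau) = \int_{-\infty}^u h^\ast(v,\tau)\,dv$ is recovered using the exponential decay as $\Re u \to -\infty$; uniqueness up to an additive constant is automatic.

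Finally, the Melnikov approximation bound \eqref{def:HalfMelnikov:uns:cota:NonExp} follows from a single further iteration: since
\[
\pa_u Q - \mu\eps^\eta \MM^u = \ol\FF^u(\pa_u Q) - \ol\FF^u(0) = -\pa_u\GG_\eps\!\left(\tfrac{\cosh^2 u}{8}(\pa_u Q)^2\right)
\]
is quadratic in $\pa_u Q$, its norm is controlled by $\|\pa_u Q\|_\sigma^2 \lesssim |\mu|^2 \eps^{2\eta - 3r}$, matching the target. The main obstacle is calibrating the weighted norm so that, on the one hand, the first iterate inherits the sharp Melnikov size $\eps^{-3r/2}$ rather than the crude $\eps^{-2r}$ that the pointwise bound $|\Psi'(u)| \lesssim \eps^{-2r}$ would give (this requires exploiting the oscillation in $\tau$ through $\GG_\eps$), and on the other hand the quadratic contraction argument closes under the single hypothesis $\eta > 3r/2$; once this balance is achieved, the rest is a routine adaptation of Section \ref{sec:ExistenceManifolds}.
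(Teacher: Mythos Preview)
Your overall scheme is exactly the paper's: set up $\pa_u Q$ as a fixed point of $\ol\FF^u$, identify $\ol\FF^u(0)=\mu\eps^\eta\MM^u$, close the contraction under $\eta>3r/2$, and read off \eqref{def:HalfMelnikov:uns:cota:NonExp} from one further iteration. The difference is that you overcomplicate the choice of norm and the mechanism behind the $\eps^{-3r/2}$ bound.

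The paper drops the singular weights entirely and works with the plain sup norm
\[
\|h\|=\sup_{\Re u<-U}\left|e^{-2u}h(u)\right|+\sup_{\Re u>-U}\left|h(u)\right|,
\]
then bounds the first iterate by the direct $L^1$ estimate
\[
\|\ol\FF^u(0)\|\leq K|\mu|\eps^\eta\left(K+\int_{-U}^{u}\frac{dv}{|v-\rr_-|^2|v-\ol\rr_-|^2}\right)\leq \frac{b_4}{2}|\mu|\eps^{\eta-3r/2}.
\]
No oscillation in $\tau$ is used here. The pointwise size of $\Psi'=\beta$ is indeed $\sim\eps^{-2r}$, but $\GG_\eps$ is an integral operator, and the $L^1$ mass of $|v-\rr_-|^{-2}|v-\ol\rr_-|^{-2}$ along a line at distance $\gtrsim\Im\rr_-\sim\eps^{r/2}$ from the pair of poles is only $\sim\eps^{-3r/2}$ (the model computation is $\int_\RR(x^2+a^2)^{-2}dx=\pi/(2a^3)$ with $a\sim\eps^{r/2}$). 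So the ``obstacle'' you flag---calibrating a weighted norm so that oscillation upgrades $\eps^{-2r}$ to $\eps^{-3r/2}$---simply does not arise: integration alone gives the sharp size. With the unweighted norm the Lipschitz bound is equally direct, $\mathrm{Lip}\,\ol\FF^u\lesssim|\mu|\eps^{\eta-3r/2}$, and the rest of your argument goes through verbatim.
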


\begin{proof}
The proof follows the same lines of Theorem \ref{th:ExistenceManifolds}. We use the modified norm
\begin{equation}\label{def:Norma:NonExp}
 \|h\|=\sup_{u\in \DD^u\cap\{\Re u<-U\}}\left|e^{-2 u}h(u)\right|+\sup_{u\in
\DD^u\cap\{\Re
u>-U\}}\left|h(u)\right|.
\end{equation}
Then, using \eqref{eq:PrimeraIteracio:ExistenceQ}, one can bound $\ol\FF(0)$ as
\[
\left\|\ol \FF(0)\right\|=K|\mu|\eps^\eta\left(K+\int_{-U}^u \frac{1}{|v-\rr_-|^2|v-\ol\rr_-|^2}\right) \leq \frac{b_4}{2}|\mu|\eps^{\eta-3r/2}.
\]
Proceeding as before, it is straightforward to see that $\ol\FF$ is contractive from the ball $\ol B(\mu\eps^{\eta-3r/2})$ to itself with Lipschitz constant satisfying
\[
 \mathrm{Lip}\leq |\mu|\eps^{\eta-3r/2},
\]
which gives the desired result.
\end{proof}

The function $T^s$ satisfies the same properties in the symmetric domain $\DD_{\kk_1}^s$. Note that $D_{\kk_1}^u\cap D_{\kk_1}^s\cap\RR$ is an interval of size $\OO(\eps^r/2)$ centered at $u=\ln(1+\sqrt{2})$. Therefore, $D_{\kk_1}^u\cap D_{\kk_1}^s\cap\RR$ does not contain a fundamental domain. Nevertheless, it suffices to deal with this domain to compute the distance between the manifolds in the section $x=3\pi/2$.

From Theorem \ref{th:ExistenceManifolds:NonExp} the formula of the distance follows. Thus, to finish the proof of Theorem \ref{th:Main:NonExp}, it is enough to use Corollary \ref{coro:NonExpSmall}, for $r\geq 2$, to check that the Melnikov function $\MM(\ln(1+\sqrt{2}),\tau)$ has simple zeros so that the manifolds intersect transversally.

\section{The quasiperiodic case: Proof of Theorems
\ref{th:Main:QP} and \ref{th:Main:NonExp:QP}}\label{sec:SketchProof:QP}
As we did in the periodic case, the proof in next section includes at the same time the results in Theorem \ref{th:Main:QP} and in the first statement of Theorem \ref{th:Main:NonExp:QP}. Then, Section \ref{sec:SketchProof:QP:NonExp} contains the proof of the second and third statement of Theorem \ref{th:Main:NonExp:QP}.

\subsection{Proof of Theorem \ref{th:Main:QP}}

We follow the same approach as in the periodic case. Therefore, we only point out the main differences with respect to it. We perform the symplectic change of variables 
\eqref{eq:CanviSimplecticSeparatriu} to Hamiltonian \eqref{def:ToyModel:Hamiltonian:QP}. In the new variables, it reads
\begin{equation}\label{def:ToyModel:HamiltonianReparameterized:QP}
\eps \ol K\left(u,w,\theta_1,\theta_2,I_1,I_2\right)=\eps K\left(x_0(u),\frac{w}{y_0(u)},\theta_1,\theta_2,I_1,I_2\right).
\end{equation}
As in the periodic case,  we look for the perturbed invariant manifolds as graphs of the
gradient of generating functions $T^{u,s}(u,\theta_1,\theta_2)$, which are solutions of the
Hamilton-Jacobi equation
\begin{equation}\label{def:HJ1:QP}
\ol K\left(u, \pa_u T,\theta_1,\theta_2,\pa_{\theta_1}T,\pa_{\theta_2}T\right)=0.
\end{equation}
This equation reads
\begin{equation}\label{def:HJ2:QP}
 \eps\ii\pa_{\theta_1} T+\eps\ii\ga\pa_{\theta_2} T+\frac{\cosh^2
u}{8}\left(\pa_uT\right)^2-\frac{4}{\cosh^2u}+\mu\eps^\eta \Psi(u)F(\theta_1,\theta_2)=0,
\end{equation}
where $\Psi(u)$ is the function defined in \eqref{def:Psi}.

We impose the same asymptotic conditions \eqref{eq:AsymptCondFuncioGeneradora:uns} and \eqref{eq:AsymptCondFuncioGeneradora:st} as in the periodic case. For $\mu=0$, the solution of equation \eqref{def:HJ2:QP} is
\eqref{def:T0}. We study the existence of solutions of equation \eqref{def:HJ2:QP}  in
complex domains which have points close to  the singularities
$u=\rr_-, \ol\rr_-$. Nevertheless, in this case, we only need to stay at a distance of order $\OO(\sqrt{\eps})$ of the singularity instead of $\OO(\eps)$ as happened in the periodic case (see \cite{DelshamsGJS97}).  To this end, we define the modified domains
\begin{equation}\label{def:DominisOuter:QP}
\begin{split}
\dps D^{u}_{\kk}&=\left\{u\in\CC; |\Im u|<-\tan \beta_1\left(\Re u-\Re
\rr_-\right)+\Im \rr_--\kk\sqrt{\eps}\right\}\\
\dps D^{s}_{\kk}&=\left\{u\in\CC; |\Im u|<\tan \beta_1\left(\Re u-\Re
\rr_-\right)+\Im \rr_--\kk\sqrt{\eps}\right\}.
\end{split}
\end{equation}
Note that the only difference with respect to the domains \eqref{def:DominisOuter} is the change of $\eps$ by $\sqrt{\eps}$.

In the quasiperiodic case, the complex domain of the angular variables plays a crucial role. As it was done in \cite{DelshamsGJS97, Sauzin01}, we will prove the existence of these generating functions in very concrete domains in $(\theta_1,\theta_2)$. To this end let us define the complexified torus for any $\sigma=(\sigma_1,\sigma_2)\in \RR^2$ with $\sigma_1,\sigma_2>0$,
\[
 \TT^2_\sigma=\left\{(\theta_1,\theta_2)\in \left(\CC/\ZZ\right)^2: |\Im\theta_i|\leq\sigma_i\right\},
\]
 with 
\begin{equation}\label{def:BandesAngulars}
\sigma_i=r_i-d_i\sqrt{\eps},
\end{equation}
where $r_i$ are the constants defined in \eqref{hyp:QP1} and $d_i>0$ are any constants independent of $\eps$.

The next theorem gives the existence of the invariant manifolds in
the domains $D^{\ast}_{\kk}\times\TT^2_\sigma$ with $\ast=u,s$ (see \eqref{def:DominisOuter:QP}). We state the results for the unstable
invariant manifold. The stable one has analogous properties.

\begin{theorem}\label{th:ExistenceManifolds:QP}
Let us fix any $\kk_1,d_1,d_2>0$. Then, there exists $\eps_0>0$ such
that for $\eps\in(0,\eps_0)$, $\alpha\in (0,1)$, $\mu\in B(\mu_0)$, satisfying \eqref{cond:ExpSmall} and that
$\eps^{\eta-1}\frac{\sqrt{\eps}+\sqrt{\alpha}}{1-\al}$ is small enough the
Hamilton-Jacobi equation \eqref{def:HJ2:QP} has  a unique
(modulo an additive constant) real-analytic solution in
$D^{u}_{\kk_1}\times\TT^2_\sigma$, with $\sigma$ defined in \eqref{def:BandesAngulars}, satisfying the asymptotic
condition \eqref{eq:AsymptCondFuncioGeneradora:uns}.

Moreover, there exists a real constant $b_5>0$ independent of $\eps$
and $\mu$, such that for $(u,\theta_1,\theta_2)\in
D^{u}_{\kk_1}\times\TT^2_\sigma$,
\[
\begin{split}
\left|\pa_u T^u(u,\theta_1,\theta_2)-\pa_u T_0(u)\right|&\leq
\frac{b_5|\mu|\eps^{\eta-1}}{(\sqrt{\eps}+\sqrt{\al})(1-\al)}\\
\left|\pa^2_u T^u(u,\theta_1,\theta_2)-\pa_u^2 T_0(u)\right|&\leq
\frac{b_5|\mu|\eps^{\eta-\frac{3}{2}}}{(\sqrt{\eps}+\sqrt{\al})(1-\al)}.
\end{split}
\]
Furthermore, if we define the half Melnikov function
\begin{equation}\label{def:HalfMelnikov:uns:QP}
\MM^u(u,\theta_1,\theta_2)=-4\int_{-\infty}^{0} 
\frac{\sinh(u+s)\cosh(u+s)}{\left(\cosh^2(u+s)-2\alpha\sinh(u+s)\right)^2}
F\left(\theta_1+\frac{s}{\eps},\theta_2+\frac{\ga s}{\eps}\right)ds,
\end{equation}
the generating function $T^u$  satisfies that, for $(u,\theta_1,\theta_2)\in
D^{u}_{\kk_1}\times\TT^2_\sigma$,
\begin{equation}\label{def:HalfMelnikov:uns:QP:bound}
\left|\pa_u T^u(u,\theta_1,\theta_2)-\pa_u T_0(u)-\mu\eps^\eta\MM^u(u,\theta_1,\theta_2)\right|\leq
\frac{b_5|\mu|^2\eps^{2\eta-2}}{(1-\al)^2}.
\end{equation}
\end{theorem}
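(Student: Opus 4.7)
The plan is to follow the scheme of the proof of Theorem \ref{th:ExistenceManifolds}, with two adaptations dictated by the quasiperiodic setting: the $u$-domain \eqref{def:DominisOuter:QP} stays only $\sqrt{\eps}$-close to the singularity $\rr_-$ rather than $\eps$-close, and the angular strips shrink from $r_i$ down to $r_i-d_i\sqrt{\eps}$, as in \cite{DelshamsGJS97, Sauzin01}. First, write $T^u=T_0+Q$, which turns \eqref{def:HJ2:QP} into
\begin{equation*}
\LL_\eps Q=\FF(\pa_uQ,u,\theta_1,\theta_2),\qquad \LL_\eps=\pa_u+\eps\ii(\pa_{\theta_1}+\ga\pa_{\theta_2}),
\end{equation*}
with $\FF(h,u,\theta_1,\theta_2)=-\tfrac{\cosh^2 u}{8}h^2-\mu\eps^\eta\Psi(u)F(\theta_1,\theta_2)$. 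Differentiate in $u$ and recast this as the fixed-point problem $\pa_u Q=\ol\FF^u(\pa_u Q)$, where $\ol\FF^u=\pa_u\GG_\eps\FF$ and $\GG_\eps$ is the right-inverse of $\LL_\eps$ defined by integration along the characteristic flow from $-\infty$ to $0$,
\begin{equation*}
\GG_\eps(h)(u,\theta_1,\theta_2)=\int_{-\infty}^0 h\bigl(u+s,\theta_1+s/\eps,\theta_2+\ga s/\eps\bigr)\,ds,
\end{equation*}
which commutes with $\pa_u$.

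Next, introduce the Banach space $\EE_{\kk_1,\sigma}$ of real-analytic functions on $D^u_{\kk_1}\times\TT^2_\sigma$, with $\sigma=(\sigma_1,\sigma_2)$ as in \eqref{def:BandesAngulars}, equipped with the weighted sup-norm \eqref{def:Norma:Existencia} on each Fourier coefficient in $(\theta_1,\theta_2)$ together with the Fourier $\ell^1$-norm $\|h\|_\sigma=\sum_{k\in\ZZ^2}\|h^{[k]}\|\,e^{|k_1|\sigma_1+|k_2|\sigma_2}$. The key analytic step is the quasiperiodic analogue of Lemma \ref{lemma:Existence:PropietatsGInfty} for $\GG_\eps$: namely the boundedness $\|\GG_\eps h\|_\sigma\leq K\|h\|_\sigma$, obtained harmonic by harmonic from the pointwise estimate on the integrand. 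With this in hand, use the identity $\ol\FF^u(0)=-\mu\eps^\eta\GG_\eps(\Psi'(u)F(\theta_1,\theta_2))=\mu\eps^\eta\MM^u$ together with $\Psi'=4\beta$ and the singularity inequalities \eqref{def:beta:singularity}, \eqref{eq:Cota:Singularitat:0}, \eqref{eq:Cota:Singularitat} and \eqref{eq:Cota:Singularitat:2} applied with $\eps$ replaced by $\sqrt{\eps}$ (since the new contour stays at distance only $\sqrt{\eps}$ from $\rr_-$) to obtain $\|\ol\FF^u(0)\|_\sigma\leq K|\mu|\eps^{\eta+1}/((\sqrt{\eps}+\sqrt{\al})(1-\al))$.

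Then run a contraction argument on the ball
\begin{equation*}
\overline{B}\bigl(b_5|\mu|\eps^{\eta+1}/((\sqrt{\eps}+\sqrt{\al})(1-\al))\bigr)\subset\EE_{\kk_1,\sigma}.
\end{equation*}
As in the proof of Proposition \ref{prop:ExistenceT1:PtFix}, the quadratic term $-\tfrac{\cosh^2 u}{8}h^2$ yields a Lipschitz constant bounded by $K|\mu|\eps^{\eta-1}(\sqrt{\eps}+\sqrt{\al})/(1-\al)$, which is strictly less than $1/2$ by hypothesis. The Banach fixed-point theorem then provides the unique solution $\pa_u Q\in\EE_{\kk_1,\sigma}$; Cauchy estimates on a slightly shrunken subdomain yield the bound on $\pa_u^2 Q$; and the estimate \eqref{def:HalfMelnikov:uns:QP:bound} follows by applying the Lipschitz bound to $\ol\FF^u(\pa_u Q)-\ol\FF^u(0)$.

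The principal technical obstacle I foresee is the quantitative control of $\|\ol\FF^u(0)\|_\sigma$: although a naive pointwise bound on $\GG_\eps$ suffices for the contraction itself, obtaining the correct dependence on $\al$ and $\eps$ in the bound above requires exploiting the angular decay \eqref{hyp:QP1} of $F$ together with a deformation of the $s$-path (upward or downward by $\OO(\sqrt{\eps})$ according to the sign of $k\cdot\omega$) so as to replace the naive bound on each harmonic by a factor $e^{-|k\cdot\omega|\sqrt{\eps}/\eps}$. This deformation, which is precisely what forces the $\OO(\sqrt{\eps})$ shrinkage of both the $u$-domain and the $\theta$-strips, is the core ingredient inherited from \cite{DelshamsGJS97}, and is where the golden-mean Diophantine properties of $\ga$ enter implicitly through assumption \eqref{hyp:QP1}.
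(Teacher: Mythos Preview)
Your overall architecture is exactly the paper's: set $T^u=T_0+Q$, rewrite as a fixed-point equation $\pa_uQ=\ol\FF^u(\pa_uQ)=\pa_u\GG_\eps\FF(\pa_uQ)$ in the weighted Banach space $\EE_{\kk_1,\sigma}$, bound the first iterate and the Lipschitz constant, then apply contraction mapping and Cauchy estimates. The Lipschitz bound you state, $K|\mu|\eps^{\eta-1}(\sqrt{\eps}+\sqrt{\al})/(1-\al)$, is correct and is indeed obtained from the singularity inequalities with $\eps$ replaced by $\sqrt{\eps}$.

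However, your treatment of $\|\ol\FF^u(0)\|_\sigma$ is off in two related ways. First, the bound you quote, $K|\mu|\eps^{\eta+1}/((\sqrt{\eps}+\sqrt{\al})(1-\al))$, is not what the argument you sketch actually gives, and it is not the bound the paper uses: the paper obtains simply $\|\ol\FF^u(0)\|_\sigma\leq K|\mu|\eps^\eta$ (the ball radius in Proposition~\ref{prop:ExistenceT1:PtFix:QP} is $b_5|\mu|\eps^\eta$). The singularity inequalities \eqref{eq:Cota:Singularitat:0}--\eqref{eq:Cota:Singularitat:2} play no role here; they are used only for the Lipschitz constant. Second, the mechanism that produces this bound is not a path deformation in $s$ but rather the harmonic-by-harmonic gain $\|\GG_\eps^{[k]}(h)\|\leq K\eps|k\cdot\omega|^{-1}\|h^{[k]}\|$ (Lemma~\ref{lemma:Existence:PropietatsGInfty:QP}) combined with the small-divisor summation
\[
\sum_{k\in\ZZ^2\setminus\{0\}}\frac{1}{|k\cdot\omega|}\,e^{-d_1|k_1|\sqrt{\eps}-d_2|k_2|\sqrt{\eps}}\leq \frac{K}{\eps},
\]
which the paper bounds by splitting according to $|k\cdot\omega|\gtrless 1/2$ and using, for the near-resonant part, the Diophantine inequality $|k_1+\gamma k_2|\geq C/|k|$ for the golden mean. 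This step is explicit, not implicit, and it has nothing to do with hypothesis~\eqref{hyp:QP1}, which is only the analyticity of $F$; you have conflated the two. Getting this summation right is exactly the ``technical obstacle'' you anticipate, but your description of it (path deformation producing a factor $e^{-|k\cdot\omega|\sqrt{\eps}/\eps}$) would not close: such a factor alone still leaves a divergent sum without the Diophantine input.
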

The proof of this theorem is deferred to Section \ref{sec:ExistenceManifolds:QP}.
The parameterization of the stable manifold has analogous properties. In
particular, we can define 
\begin{equation}\label{def:HalfMelnikov:st:QP}
\MM^s(u,\theta_1,\theta_2)=4\int_{0}^{+\infty} 
\frac{\sinh(u+s)\cosh(u+s)}{\left(\cosh^2(u+s)-2\alpha\sinh(u+s)\right)^2}
F\left(\theta_1+\frac{s}{\eps},\theta_2+\frac{\ga s}{\eps}\right)ds,
\end{equation}
and then, for $(u,\theta_1,\theta_2)\in
D^{s}_{\kk_1}\times\TT^2_\sigma$,
\begin{equation}\label{def:HalfMelnikov:st:QP:bound}
\left|\pa_u T^s(u,\theta_1,\theta_2)-\pa_u T_0(u)-\mu\eps^\eta\MM^s(u,\theta_1,\theta_2)\right|\leq
\frac{b_5|\mu|^2\eps^{2\eta-2}}{(1-\al)^2}.
\end{equation}
We consider the function
\begin{equation}\label{def:DiffManifolds:QP}
 \Delta (u,\theta_1,\theta_2)=T^s(u,\theta_1,\theta_2)-T^u(u,\theta_1,\theta_2).
\end{equation}
This function is defined in $R_{\kk_1}\times\TT_\sigma^2$, where $R_\kk$ is the romboidal domain defined by
\begin{equation}\label{def:DominiInterseccio:QP}
\dps R_{\kk}=D^{u}_{\kk}\cap D^{s}_{\kk},
\end{equation}
where $D^\ast_\kk$ are the domains defined in \eqref{def:DominisOuter:QP}.

Subtracting equation \eqref{def:HJ2:QP} for both $T^s$ and $T^u$, one can easily
see that $\Delta\in\mathrm{Ker}\wt\LL_\eps$ for 
\begin{equation}\label{def:OperadorAnulador:QP}
 \wt\LL_\eps=\eps\ii\pa_{\theta_1}+\eps\ii\ga\pa_{\theta_2}+\left(\frac{\cosh^2u}{8}\left(\pa_uT^s(u,
\theta_1,\theta_2)+\pa_u T^u(u,\theta_1,\theta_2)\right)\right)\pa_u.
\end{equation}
Since Theorem \ref{th:ExistenceManifolds:QP} ensures that the perturbed invariant
manifolds are well approximated by the unperturbed separatrix in the domains
$D^u_{\kk_1}\times\TT^2_\sigma$ and $D^s_{\kk_1}\times\TT^2_\sigma$, we know that the operator $\wt \LL_\eps$ is
close to the constant coefficients operator
\begin{equation}\label{def:Lde:QP}
 \LL_\eps=\eps\ii\pa_{\theta_1}+\eps\ii\ga\pa_{\theta_2}+\pa_u
\end{equation}
in the domain $R_{\kk_1}\times\TT_\sigma^2$.

Any function which is defined in $\{u\in\CC;\Re u=a,\Im u\in [-r_0,r_0]\}\times\TT^2_\sigma$, for any $a\in\RR$, and belongs to the kernel of
$\LL_\eps$  is defined in all the strip $\{|\Im u|<r_0\}\times\TT^2_\sigma$ and
has exponentially small bounds for real values of the variables. This fact is
summarized in the next lemma, whose proof follows the same lines as the one of
 Lemma 4.1 of \cite{Sauzin01}.

\begin{lemma}\label{lemma:Lazutkin:QP}
Let us consider a function $\zeta(u,\theta_1,\theta_2)$ analytic in
$(u,\theta_1,\theta_2)\in\{u\in \CC:\Re u=\Re \rr_-,|\Im u|< \Im\rr_--\kk\sqrt{\eps}\}\times\TT^2_\sigma$, where $\sigma=(\sigma_1,\si_2)$ with $\sigma_i=r_i-d_i\sqrt{\eps}$, which is
solution of
$\LL_\eps\zeta=0$. Then, $\zeta$ can be extended analytically to
$\{|\Im u|<\Im\rr_--\kk\sqrt{\eps}\}\times\TT^2_{\sigma}$ and its mean value
\[
\langle\zeta\rangle=\frac{1}{(2\pi)^2}\int_0^{2\pi}\int_0^{2\pi}\zeta(u,\theta_1,\theta_2)d\theta_1 d\theta_2
\]
does not depend on $u$. Moreover, for $\kk'>\kk$ and
$d_i'>d_i$, we define
\begin{equation}\label{def:M_r:QP}
M=\max_{(u,\theta_1,\theta_2)\in \left[
-\Im\rr_-+\kk'\sqrt{\eps},\Im\rr_--\kk'\sqrt{\eps}\right]\times\overline{\TT}^2_{\sigma'}}\left|\pa_u\zeta(u,\theta_1,\theta_2)\right|
\end{equation}
where $\si'=(\si'_1,\si'_2)$ with $\si'_i=r_i-d'_i\sqrt{\eps}$. Then, provided $\eps$ is small enough, for $(u,\theta_1,\theta_2)\in \RR\times\TT^2$,
\[
\left|\pa_u\zeta(u,\theta_1,\theta_2)\right|\leq 4  Me^{-c(\ln(\eps/\Im\rr_-)){\dps\sqrt{\tfrac{\Im\rr_-}{\eps}}}}.
\]
where $c$ is the periodic function defined in \eqref{def:c}.
\end{lemma}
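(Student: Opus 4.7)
My plan is to follow the approach of Lemma 4.1 of \cite{Sauzin01}, adapting it to our setting where the relevant width of the complex strip in $u$ is $\Im\rr_-$ rather than $\pi/2$. I would begin by Fourier expanding $\zeta$ in the angle variables,
\[
\zeta(u,\theta_1,\theta_2)=\sum_{k\in\ZZ^2}\zeta^{[k]}(u)e^{ik\cdot\theta},
\]
so that the equation $\LL_\eps\zeta=0$ decouples into the family of first order ODEs $\pa_u\zeta^{[k]}+i\eps\ii(k\cdot\omega)\zeta^{[k]}=0$, whose solutions are
\[
\zeta^{[k]}(u)=A_k\,e^{-i(k\cdot\omega)u/\eps},\qquad A_k\in\CC,
\]
with $A_k$ determined by the values of $\zeta$ on $\{\Re u=\Re\rr_-\}$. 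These closed-form expressions immediately provide the analytic continuation of each $\zeta^{[k]}$ to the full strip $\{|\Im u|<\Im\rr_--\kk\sqrt{\eps}\}\times\TT^2_\sigma$, and since $k\cdot\omega\neq 0$ for every $k\neq 0$ (because $\ga$ is irrational), the mean $\langle\zeta\rangle=\zeta^{[0]}=A_0$ is constant in $u$.

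To estimate the coefficients for $k\neq 0$, I would shift $u$ to $\Im u=\mathrm{sign}(k\cdot\omega)(\Im\rr_--\kk'\sqrt{\eps})$ and each $\theta_j$ to $\Im\theta_j=-\mathrm{sign}(k_j)\sigma'_j$, so that $|e^{-i(k\cdot\omega)u/\eps}|$ and $|e^{-ik\cdot\theta}|$ acquire the largest possible exponential factors compatible with analyticity. Combining the defining bound $M$ with the standard weighted Fourier estimate
\[
|\pa_u\zeta^{[k]}(u)|\leq\sup_{\theta\in\ol\TT^2_{\sigma'}}|\pa_u\zeta(u,\theta)|\,e^{-\sigma'_1|k_1|-\sigma'_2|k_2|},
\]
together with the identity $\pa_u\zeta^{[k]}(u)=-i\eps\ii(k\cdot\omega)A_k\,e^{-i(k\cdot\omega)u/\eps}$, one gets
\[
|A_k|\leq\frac{M\eps}{|k\cdot\omega|}\,e^{-|k\cdot\omega|(\Im\rr_--\kk'\sqrt{\eps})/\eps-\sigma'_1|k_1|-\sigma'_2|k_2|}.
\]
Differentiating the Fourier series termwise then yields, for real $(u,\theta_1,\theta_2)$,
\[
|\pa_u\zeta(u,\theta_1,\theta_2)|\leq M\sum_{k\in\ZZ^2\setminus\{0\}}e^{-|k\cdot\omega|(\Im\rr_--\kk'\sqrt{\eps})/\eps-\sigma'_1|k_1|-\sigma'_2|k_2|},
\]
in which the awkward factor $|k\cdot\omega|/\eps$ cancels against the one produced by the bound on $|A_k|$.

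The main obstacle is to bound this double series by the single exponential $e^{-c(\ln(\eps/\Im\rr_-))\sqrt{\Im\rr_-/\eps}}$, where the periodic function $c$ of \eqref{def:c} arises from the Diophantine properties of the golden mean. This is precisely the estimate carried out in \cite{DelshamsGJS97, Sauzin01}: the only difference is that their natural small parameter is $\eps$, whereas here it is $q=\eps/\Im\rr_-$, which is still small by the same device used in the proof of Proposition \ref{prop:Melnikov:QP}. Concretely, up to the lower order corrections coming from $\kk'\sqrt{\eps}$ and $d'_j\sqrt{\eps}$, the infimum of $|k\cdot\omega|/q+r_1|k_1|+r_2|k_2|$ over $k\in\ZZ^2\setminus\{0\}$ is a $2\ln\ga$-periodic function of $\ln q$ whose minimum value is exactly $c(\ln q)\sqrt{1/q}$, attained at the harmonics with $|k_1|/|k_2|$ a continued fraction convergent of $\ga$. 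The remaining, non-resonant harmonics contribute a geometrically summable tail which, together with all numerical constants, is absorbed in the prefactor $4$ in the statement provided $\eps$ is small enough.
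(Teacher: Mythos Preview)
Your proposal is correct and follows precisely the approach the paper intends: the paper itself gives no proof beyond the sentence ``whose proof follows the same lines as the one of Lemma 4.1 of \cite{Sauzin01}'', and your Fourier decomposition, explicit solution $\zeta^{[k]}(u)=A_k e^{-i(k\cdot\omega)u/\eps}$, contour shift to bound $A_k$, and reduction of the resulting double sum to the estimate of \cite{DelshamsGJS97,Sauzin01} via the substitution $q=\eps/\Im\rr_-$ (exactly as in the proof of Proposition~\ref{prop:Melnikov:QP}) are the standard steps of that argument.
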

To apply this lemma to study the difference between the invariant manifolds,
following \cite{Sauzin01} (see also \cite{GuardiaOS10}) we look for a change of
variables which conjugates $\wt\LL_\eps$ in \eqref{def:OperadorAnulador:QP} with $\LL_\eps$
in \eqref{def:Lde:QP}.

\begin{theorem}\label{th:Canvi:QP}
Let us consider the constant $\kk_1>0$ defined in Theorem
\ref{th:ExistenceManifolds} and let us fix any $\kk_3>\kk_2>\kk_1$. Then, there
exists $\eps_0>0$ such that for $\eps\in (0,\eps_0)$, $\alpha \in (0,1)$ and $\mu\in B(\mu_0)$
satisfying  \eqref{cond:ExpSmall} and that $\eps^{\eta-1}\frac{\sqrt{\eps}+\sqrt{\al}}{1-\al}$ is small enough, there exists a
real-analytic function $\CCC$ defined in $R_{\kk_2}\times\TT^2_\sigma$ such that
the change
\begin{equation}\label{def:CanviConjugador:QP}
 (u,\theta_1,\theta_2)=(v+\CCC(v,\theta_1,\theta_2),\theta_1,\theta_2)
\end{equation}
 conjugates the operators $\wt\LL_\eps$ and $\LL_\eps$ defined in
\eqref{def:OperadorAnulador:QP} and \eqref{def:Lde:QP}. Moreover, for $(v,\theta_1,\theta_2)\in
R_{\kk_2}\times\TT_\sigma$, $v+\CCC(v,\theta_1,\theta_2) \in R_{\kk_1}$ and there exists a
constant $b_6>0$ such that
\[
\begin{split}
 \left|\CCC(v,\theta_1,\theta_2)\right|&\leq b_6|\mu|\eps^{\eta-\frac{1}{2}}\frac{\sqrt{\al}+\sqrt{\eps}}{1-\al}|\ln\eps|\\
 \left|\pa_v\CCC(v,\theta_1,\theta_2)\right|&\leq b_6|\mu|\eps^{\eta-1}
\frac{\sqrt{\al}+\sqrt{\eps}}{1-\al}|\ln\eps|.
\end{split}
\]
Furthermore, $(u,\theta_1,\theta_2)=(v+\CCC(v,\theta_1,\theta_2),\theta_1,\theta_2)$ is invertible and its inverse is
of the form $ (v,\theta_1,\theta_2)=(u+\VV(u,\theta_1,\theta_2),\theta_1,\theta_2)$ where $\VV$ is a function defined
for $(u,\theta_1,\theta_2)\in R_{\kk_3}\times\TT^2_\sigma$ and satisfies 
\[
 \left|\VV(u,\theta_1,\theta_2)\right|\leq b_6|\mu|\eps^{\eta-\frac{1}{2}}\frac{\sqrt{\al}+\sqrt{\eps}}{1-\al}|\ln\eps|
\]
and that $u+\VV(u,\theta_1,\theta_2)\in R_{\kk_2}$ for $(u,\theta_1,\theta_2)\in
R_{\kk_3}\times\TT_\sigma^2$.
\end{theorem}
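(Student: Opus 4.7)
The plan is to follow the strategy of Theorem \ref{th:Canvi}, adapted to handle the small divisors $k\cdot\omega = k_1+\gamma k_2$ arising from the two-frequency structure. As in Section \ref{sec:Canvi}, seeking a change of the form \eqref{def:CanviConjugador:QP} conjugating $\wt\LL_\eps$ with $\LL_\eps$ is equivalent to solving the equation $\LL_\eps\CCC = \JJ(\CCC)$, where
\[
\JJ(h)(v,\theta_1,\theta_2) = \left.\frac{\cosh^2 u}{8}\left(\pa_u Q^s(u,\theta_1,\theta_2) + \pa_u Q^u(u,\theta_1,\theta_2)\right)\right|_{u=v+h(v,\theta_1,\theta_2)}
\]
and $Q^{u,s} = T^{u,s} - T_0$. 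Rewriting this as a fixed point problem $\CCC = \ol\JJ(\CCC) := \wt\GG_\eps\JJ(\CCC)$ requires building a right-inverse $\wt\GG_\eps$ of $\LL_\eps$ with appropriate quantitative estimates on the romboidal domain $R_\kk$ with angular strips $\TT^2_\sigma$.

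First I would set up Banach spaces $\XX_{n,\sigma}$ of real-analytic functions on $R_\kk\times\TT^2_\sigma$ with norms
\[
\|h\|_{n,\sigma} = \sum_{k\in\ZZ^2}\left\|h^{[k]}\right\|_n e^{|k_1|\sigma_1+|k_2|\sigma_2},
\]
where $\|\cdot\|_n$ is the weighted sup-norm \eqref{def:Norma:Canvi} penalizing poles at $\rr_-$ and $\ol\rr_-$. The operator $\wt\GG_\eps$ is defined Fourier-coefficient-wise: the zero mode is integrated from the left vertex $v_0$ of $R_\kk$ as in \eqref{def:operador:canvi}, while for $k\neq 0$ one sets
\[
\wt\GG_\eps(h)^{[k]}(v) = \int_{v_\ast(k)}^{v}e^{i(k\cdot\omega)(w-v)/\eps}h^{[k]}(w)\,dw,
\]
integrating from the upper or lower vertex of $R_\kk$ according to the sign of $k\cdot\omega$. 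The shift of the integration path provides exponentially small boundary contributions controlled by the angular weights, while an integration by parts yields a gain of $\eps/|k\cdot\omega|$ per derivative.

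The next step, which is the core estimate, is to show that on the slightly shrunken domain $R_{\kk_2}\times\TT^2_{\sigma}$, the operator $\wt\GG_\eps$ satisfies an analog of Lemma \ref{lemma:Canvi:Operador} with at most a loss of one power in the polar weight and an additional $|\ln\eps|$ factor. The logarithmic factor comes from summing the small-divisor bound $\eps/|k\cdot\omega|$ against the analyticity weight $e^{-|k_1|d_1\sqrt{\eps}-|k_2|d_2\sqrt{\eps}}$: the resonant modes associated to continued-fraction convergents of $\gamma$ yield the standard logarithmic growth, as in the analysis leading to \eqref{def:Melnikov:BandaAmpla:QP} and following the lines of Section 4 of \cite{Sauzin01}. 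This is precisely the reason the domain \eqref{def:DominisOuter:QP} uses $\sqrt{\eps}$ rather than $\eps$ as in the periodic case, and why the strips $\sigma_i = r_i - d_i\sqrt{\eps}$ shrink at this rate.

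Finally, I would close the argument by a fixed point iteration in the ball of radius $b_6|\mu|\eps^{\eta-1/2}(\sqrt{\al}+\sqrt{\eps})(1-\al)^{-1}|\ln\eps|$ inside $\XX_{1,\sigma}$, exactly mirroring Proposition \ref{prop:Canvi:PtFix}: one splits $\ol\JJ(0) = \BB_1 + \BB_2$ using that $\pa_u Q^{u,s}$ are fixed points of the quasiperiodic half-Melnikov operators, bounds $\BB_1$ linearly via Theorem \ref{th:ExistenceManifolds:QP} together with \eqref{def:HalfMelnikov:uns:QP:bound}--\eqref{def:HalfMelnikov:st:QP:bound}, and controls $\BB_2$ quadratically using \eqref{eq:Cota:Singularitat} and \eqref{eq:Cota:Singularitat:2} to absorb the factor $\cosh^2 v$ near the singularities. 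The Lipschitz constant comes out as $K|\mu|\eps^{\eta-1}(\sqrt{\eps}+\sqrt{\al})(1-\al)^{-1}|\ln\eps|$, which is less than $1/2$ by the smallness hypothesis. The bound on $\pa_v\CCC$ is then obtained by Cauchy estimates between nested domains, and the inverse change $\VV$ is produced by a separate, simpler contraction argument. The main obstacle is the quantitative small-divisor analysis for $\wt\GG_\eps$: obtaining sharp control of the logarithmic loss while maintaining the minimal shrinkage $d_i\sqrt{\eps}$ in the $\theta_i$-strips is delicate and dictates the $\sqrt{\eps}$ rate appearing both in \eqref{def:DominisOuter:QP} and in the final bounds.
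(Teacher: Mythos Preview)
Your proposal is correct and follows essentially the same route as the paper: reduce to $\LL_\eps\CCC=\JJ(\CCC)$, build a right inverse $\wt\GG_\eps$ acting Fourier-coefficient-wise on $R_\kk\times\TT^2_\sigma$, and close by a contraction argument followed by Cauchy estimates and a separate fixed point for $\VV$.

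Two small differences with the paper are worth flagging. First, the paper does \emph{not} import the $\BB_1+\BB_2$ decomposition of $\ol\JJ(0)$ from the periodic proof; instead it records only the single mapping estimate $\|\wt\GG_\eps(h)\|_{0,\sigma}\leq K|\ln\eps|\,(1-\al)^{-1/2}\|h\|_{1,\sigma}$ (Lemma~\ref{lemma:Canvi:Operador:QP}) and then bounds $\ol\JJ(0)$ directly from Proposition~\ref{prop:ExistenceT1:PtFix:QP}. Correspondingly, the paper runs the contraction in $\XX_{0,\sigma}$ rather than in $\XX_{1,\sigma}$ as you propose. Your finer splitting would also work and mirrors the periodic argument more faithfully, but the paper's shortcut suffices because only the pointwise bounds on $\CCC$ and $\pa_v\CCC$ are used downstream. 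Second, the $|\ln\eps|$ factor in Lemma~\ref{lemma:Canvi:Operador:QP} is not purely a small-divisor effect as you describe: a logarithm already appears when integrating the weight $|v-\rr_-|^{-1}$ along paths in $R_\kk$ whose endpoints sit at distance $\sqrt\eps$ from $\rr_-$, and this contributes for the zero mode and for modes with $|k\cdot\omega|$ small. The small-divisor sum you invoke is what forces the $\sqrt\eps$ geometry of $D^{u,s}_\kk$ and $\TT^2_\sigma$, but it does not by itself produce the $|\ln\eps|$.
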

The proof of this theorem is deferred to Section \ref{sec:Canvi:QP}.

Next step is to prove the validity of the Melnikov function. To this end, we bound
$\pa_u\Delta(u,\theta_1,\theta_2)-\MM(u,\theta_1,\theta_2)$, where $\Delta$ and $\MM$ are the functions
defined in \eqref{def:DiffManifolds:QP} and \eqref{def:Melnikov:QP} respectively. As in the periodic case, as a first step,  we bound
 $\pa_v\left(\Delta(v+\CCC(v,\theta_1,\theta_2),\theta_1,\theta_2)\right)-\MM(v,\theta_1,\theta_2)$ where $\CCC$ is the function obtained in Theorem \ref{th:Canvi:QP}.
\begin{theorem}\label{th:CotaExpPetita:QP}
 There exists $\eps_0$ and $b_7>0$ such that for any $\eps\in (0,\eps_0)$, $\al\in (0,1)$ and $\mu\in B(\mu_0)$, satisfying  \eqref{cond:ExpSmall} and that $\eps^{\eta-1}\frac{\sqrt{\eps}+\sqrt{\al}}{1-\al}$ is small enough, the following bound is satisfied
\[
\left|\pa_v\left(\Delta (v+\CCC(v,\theta_1,\theta_2),\theta_1,\theta_2)\right)-\MM(v,\theta_1,\theta_2)\right|\leq
\frac{b_7|\mu|^2\eps^{2\eta-2}|\ln\eps|}{(1-\al)^2}e^{-c(\ln(\eps/\Im\rr_-)){\dps\sqrt{\tfrac{\Im\rr_-}{\eps}}}}
\]
for $v\in
R_{\kk_3}\cap\RR$ and $(\theta_1,\theta_2)\in\TT^2$,
\end{theorem}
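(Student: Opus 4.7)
The plan is to mirror the scheme used in the periodic case (Theorem \ref{th:CotaExpPetita}), with the Lazutkin lemma replaced by its quasiperiodic analogue (Lemma \ref{lemma:Lazutkin:QP}). First, I would introduce a Melnikov potential $L(v,\theta_1,\theta_2)$, namely a primitive in the variable $v$ of the Melnikov function \eqref{def:Melnikov:QP}. The key observation is that by the substitution $s\mapsto s-u$ inside \eqref{def:Melnikov:QP}, the function $\MM$ depends only on the combinations $\theta_1-u/\eps$ and $\theta_2-\ga u/\eps$; consequently $\MM\in\mathrm{Ker}\LL_\eps$ for the operator $\LL_\eps$ in \eqref{def:Lde:QP}, and $L$ may be chosen in the same kernel.

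Next I would define
\[
\Phi(v,\theta_1,\theta_2)=\Delta\!\left(v+\CCC(v,\theta_1,\theta_2),\theta_1,\theta_2\right)-\mu\eps^{\eta}L(v,\theta_1,\theta_2).
\]
Since $\Delta\in\mathrm{Ker}\wt\LL_\eps$ for $\wt\LL_\eps$ in \eqref{def:OperadorAnulador:QP}, and Theorem \ref{th:Canvi:QP} guarantees that the change \eqref{def:CanviConjugador:QP} conjugates $\wt\LL_\eps$ with $\LL_\eps$, the first term is in $\mathrm{Ker}\LL_\eps$; the second is too, by construction. Hence $\Phi\in\mathrm{Ker}\LL_\eps$ on $R_{\kk_2}\times\TT^2_\sigma$, so Lemma \ref{lemma:Lazutkin:QP} applies and reduces the proof to controlling $|\pa_v\Phi|$ on the vertical segment $\{\Re u=\Re\rr_-,\ |\Im u|\le \Im\rr_--\kk_2'\sqrt{\eps}\}\times\overline\TT^2_{\sigma'}$ for some $\kk_2'\in(\kk_2,\kk_3)$ and $\sigma'$ slightly smaller than $\sigma$.

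To estimate $\pa_v\Phi$ I would split $\Phi=\Phi_1+\Phi_2$ with
\[
\Phi_1=\Delta(v+\CCC,\theta)-\Delta(v,\theta),\qquad \Phi_2=\Delta(v,\theta)-\mu\eps^{\eta}L(v,\theta).
\]
For $\Phi_1$, writing $\pa_v\Phi_1=[\pa_u\Delta(v+\CCC,\theta)-\pa_u\Delta(v,\theta)]+\pa_u\Delta(v+\CCC,\theta)\,\pa_v\CCC$, the first summand is bounded by $\|\CCC\|_\infty\|\pa_u^2\Delta\|_\infty$ and the second by $\|\pa_u\Delta\|_\infty\|\pa_v\CCC\|_\infty$. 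Plugging in the bounds of Theorems \ref{th:ExistenceManifolds:QP} and \ref{th:Canvi:QP} gives, for both summands, a bound of order $|\mu|^2\eps^{2\eta-2}|\ln\eps|/(1-\al)^2$ on $R_{\kk_2}\times\TT^2_\sigma$. For $\Phi_2$, the inequalities \eqref{def:HalfMelnikov:uns:QP:bound}--\eqref{def:HalfMelnikov:st:QP:bound} show that $|\pa_v\Phi_2|\le K|\mu|^2\eps^{2\eta-2}/(1-\al)^2$, with no logarithmic loss. Adding the two contributions and invoking Lemma \ref{lemma:Lazutkin:QP} yields the exponential factor $e^{-c(\ln(\eps/\Im\rr_-))\sqrt{\Im\rr_-/\eps}}$ and finishes the proof.

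The main obstacle, as I see it, is purely bookkeeping rather than conceptual: one must verify that the compositions $v\mapsto v+\CCC(v,\theta)$ indeed remain inside $R_{\kk_1}$ for $(v,\theta)\in R_{\kk_2'}\times\TT^2_{\sigma'}$ so that all the outer estimates of Theorem \ref{th:ExistenceManifolds:QP} are actually available at the enlarged domain where Lemma \ref{lemma:Lazutkin:QP} is applied, and that the slight shrinking of the analyticity strip in $(\theta_1,\theta_2)$ needed to feed the lemma (from $\si$ to $\si'$ with $\si_i'=r_i-d_i'\sqrt\eps$, $d_i'>d_i$) is compatible with the bounds of Theorem \ref{th:Canvi:QP}. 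Once these inclusions and the Cauchy-type estimates on $\pa_u^2\Delta$ are set up on appropriate nested domains, the estimate stated in Theorem \ref{th:CotaExpPetita:QP} follows directly.
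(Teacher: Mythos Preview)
Your proposal is correct and follows essentially the same scheme as the paper's own proof: introduce the Melnikov potential $L$, define $\Phi$, use the conjugation from Theorem \ref{th:Canvi:QP} to place $\Phi$ in $\mathrm{Ker}\LL_\eps$, split $\Phi=\Phi_1+\Phi_2$ exactly as you do, bound each piece using Theorems \ref{th:ExistenceManifolds:QP}--\ref{th:Canvi:QP} and \eqref{def:HalfMelnikov:uns:QP:bound}--\eqref{def:HalfMelnikov:st:QP:bound}, and then invoke Lemma \ref{lemma:Lazutkin:QP}. Your version is in fact slightly more explicit than the paper's (you write out the decomposition of $\pa_v\Phi_1$ and keep the $\mu\eps^\eta$ factor on $L$ visible), and your closing remarks about the nested domains and the shrinking in $\sigma$ identify precisely the routine verifications the paper leaves implicit.
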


\begin{proof}
First we define the Melnikov potential $L$, namely a function such that $\pa_uL=\MM$  (see \cite{DelshamsG00}).As $\Delta\in \mathrm{Ker}\wt\LL_\eps$, where $\wt\LL_\eps$ is the operator in \eqref{def:OperadorAnulador:QP}, by Theorem \ref{th:Canvi:QP},
the function $\Phi(v,\theta_1,\theta_2)=\Delta(v+\CCC(v,\theta_1,\theta_2),\theta_1,\theta_2)-L(v,\tau)\in\mathrm{Ker}\LL_\eps$, where
$\LL_\eps$ is the operator defined in \eqref{def:Lde:QP}. Therefore we can apply
 Lemma \ref{lemma:Lazutkin:QP}.

 To this end, we have to bound $\pa_v\Phi(v,\theta_1,\theta_2)$
in the domain $R_{\kk_2}\times\TT^2_\sigma$. We split $\Phi$ as
$\Phi(v,\theta_1,\theta_2)=\Phi_1(v,\theta_1,\theta_2)+\Phi_2(v,\theta_1,\theta_2)$ where
\[
\begin{split}
 \Phi_1(v,\theta_1,\theta_2)&=\Delta(v+\CCC(v,\theta_1,\theta_2),\theta_1,\theta_2)-\Delta (v,\theta_1,\theta_2)\\
\Phi_2(v,\theta_1,\theta_2)&= \Delta(v,\theta_1,\theta_2)-L (v,\theta_1,\theta_2),
\end{split}
\]
where $\Delta$ is the function defined in
\eqref{def:DiffManifolds:QP}
and $L$ is the Melnikov potential.

To bound $\pa_v\Phi_1$, one has to take into account that 
$\Delta=(T^s-T_0)-(T^u-T_0)$ and therefore, it is enough to consider the bounds
obtained in Theorems \ref{th:ExistenceManifolds:QP} and \ref{th:Canvi:QP} to obtain
\[
|\pa_v\Phi_1(v,\theta_1,\theta_2)|\leq \frac{K|\mu|^2\eps^{2\eta-2}}{(1-\al)^2}|\ln\eps|
\] for $(v,\theta_1,\theta_2)\in
R_{\kk_2}\times\TT^2_\sigma$. For the second term, it is enough to use bounds
\eqref{def:HalfMelnikov:st:QP:bound} and \eqref{def:HalfMelnikov:uns:QP:bound} to obtain
\[
|\pa_v\Phi_2(v,\theta_1,\theta_2)|\leq \frac{K|\mu|^2\eps^{2\eta-2}}{(1-\al)^2}|\ln\eps|\]
 for $(v,\theta_1,\theta_2)\in
R_{\kk_2}\times\TT^2_\sigma$.
 
Therefore, we have that 
\[
|\pa_v\Phi(v,\theta_1,\theta_2)|\leq\frac{K|\mu|^2\eps^{2\eta-2}}{(1-\al)^2}|\ln\eps|
\] for
$(v,\theta_1,\theta_2)\in R_{\kk_2}\times\TT^2_\sigma$ and then, it is enough to apply Lemma
\ref{lemma:Lazutkin:QP} to finish the proof of Theorem \ref{th:CotaExpPetita:QP}.
\end{proof}

From this result and considering the inverse change $(v,\theta_1,\theta_2)=(u+\VV(u,\theta_1,\theta_2),\theta_1,\theta_2)$ obtained in Theorem \ref{th:Canvi:QP}, it is
straightforward to obtain exponentially small bounds for 
$\pa_u\Delta(u,\theta_1,\theta_2)-\MM(u,\theta_1,\theta_2)$ and its derivative. They are stated in the next
corollary, whose proof is straightforward.
\begin{corollary}\label{coro:CotaExpPetita:QP}
 Let us consider any $\kk_4>\kk_3$. Then, there exists $\eps_0$ and $b_7>0$ such
that for any $\eps\in (0,\eps_0)$  and $\al\in (0,1)$, satisfying  \eqref{cond:ExpSmall} and that $\eps^{\eta-1}\frac{\sqrt{\eps}+\sqrt{\al}}{1-\al}$ is small enough,  the
following bound is satisfied
\[
\left|\pa_u\Delta (u,\theta_1,\theta_2)-\MM(u,\theta_1,\theta_2)\right|\leq
\frac{b_7|\mu|^2\eps^{2\eta-2}|\ln\eps|}{(1-\al)^2}e^{-c(\ln(\eps/\Im\rr_-)){\dps\sqrt{\tfrac{\Im\rr_-}{\eps}}}}
\]
for $u\in R_{\kk_4}\cap\RR$ and $(\theta_1,\theta_2)\in\TT^2$.
\end{corollary}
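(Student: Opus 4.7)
The plan is to combine Theorem \ref{th:CotaExpPetita:QP} with the inverse change of variables supplied by Theorem \ref{th:Canvi:QP} via a straightforward application of the chain rule. For $u\in R_{\kk_4}\cap\RR$, set $v=u+\VV(u,\theta_1,\theta_2)$; by Theorem \ref{th:Canvi:QP}, this $v$ lies in $R_{\kk_3}\cap\RR$ provided $\kk_4>\kk_3$ is taken large enough, and satisfies $u=v+\CCC(v,\theta_1,\theta_2)$. Writing $\widetilde{\Delta}(v,\theta_1,\theta_2)=\Delta(v+\CCC(v,\theta_1,\theta_2),\theta_1,\theta_2)$, the chain rule gives
\begin{equation*}
\pa_u\Delta(u,\theta_1,\theta_2)=\frac{\pa_v\widetilde{\Delta}(v,\theta_1,\theta_2)}{1+\pa_v\CCC(v,\theta_1,\theta_2)}.
\end{equation*}

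The key algebraic identity I would exploit is the decomposition
\begin{equation*}
\pa_u\Delta(u)-\MM(u)=\frac{[\pa_v\widetilde{\Delta}(v)-\MM(v)]+[\MM(v)-\MM(u)]-\MM(u)\,\pa_v\CCC(v)}{1+\pa_v\CCC(v)},
\end{equation*}
where I have suppressed the $\theta$--dependence. The first bracket is exactly what Theorem \ref{th:CotaExpPetita:QP} controls, yielding the target exponentially small bound. The denominator is $1+o(1)$ since the estimate on $\pa_v\CCC$ in Theorem \ref{th:Canvi:QP} gives $|\pa_v\CCC|\leq b_6|\mu|\eps^{\eta-1}\tfrac{\sqrt{\al}+\sqrt{\eps}}{1-\al}|\ln\eps|=o(1)$ under the hypotheses of the corollary.

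It then remains to absorb the two correction terms into the stated bound. By the mean value theorem, $|\MM(v)-\MM(u)|\leq \sup|\pa_u\MM|\cdot|\VV(u,\theta_1,\theta_2)|$, so both $[\MM(v)-\MM(u)]$ and $\MM(u)\,\pa_v\CCC(v)$ reduce to the Melnikov function (or its derivative) on the real axis multiplied by the algebraically small quantities $|\VV|$ or $|\pa_v\CCC|$ of Theorem \ref{th:Canvi:QP}. Since Proposition \ref{prop:Melnikov:QP} shows that $\MM$ on the real axis is exponentially small of the order $e^{-c(\ln(\eps/\Im\rr_-))\sqrt{\Im\rr_-/\eps}}$, and the algebraic prefactors involved are dominated by those already absorbed into $b_7$ in Theorem \ref{th:CotaExpPetita:QP}, the two correction terms are of the same order as the principal estimate and the corollary follows. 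The only mildly delicate step is to check that $\pa_u\MM$ on the real axis retains the same exponentially small bound (up to a harmless factor $\eps^{-1/2}$); this follows by a standard Cauchy estimate on a complex disk of radius $\sim\sqrt{\eps}$, using that $\MM$ is defined on all of $R_{\kk_4}$ with the uniform bound of Proposition \ref{prop:Melnikov:QP}. This last Cauchy--estimate verification is the main technical obstacle, but it is routine given the domain geometry already set up in Section \ref{sec:SketchProof:QP}.
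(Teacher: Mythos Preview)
Your proposal is correct and follows essentially the same route as the paper. The paper itself declares the proof of this corollary ``straightforward,'' pointing only to Theorem~\ref{th:CotaExpPetita:QP}, the inverse change $(v,\theta_1,\theta_2)=(u+\VV(u,\theta_1,\theta_2),\theta_1,\theta_2)$ from Theorem~\ref{th:Canvi:QP}, and the exponential smallness of $\MM$; you have simply spelled out the chain--rule decomposition and the bookkeeping of the correction terms that this entails.
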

This corollary finishes the  proof of Theorem \ref{th:Main:QP}. Note that when $\al$ is bounded away from 1, we just need the simpler condition $\eps^{\eta-1}(\sqrt{\eps}+\sqrt{\al})$ small enough.

\subsubsection{The invariant manifolds: proof of Theorem
\ref{th:ExistenceManifolds:QP}}\label{sec:ExistenceManifolds:QP}
We look
for a solution of equation \eqref{def:HJ2:QP} satisfying the asymptotic condition
\eqref{eq:AsymptCondFuncioGeneradora:uns} as a perturbation
of  $T_0$ in \eqref{def:T0}. As in the periodic case, we define
\begin{equation}\label{def:T1:QP}
Q(u,\theta_1,\theta_2) = T (u,\theta_1,\theta_2) - T_0(u), 
\end{equation}
which is solution of 
\begin{equation}\label{eq:Existence:T1:QP}
 \LL_\eps Q=\FF(\pa_u Q,u,\theta_1,\theta_2),
\end{equation}
where $\LL_\eps$ is the operator defined in \eqref{def:Lde:QP} and
\begin{equation}\label{def:Existence:OperadorRHS:QP}
 \FF(h,u,\theta_1,\theta_2)=-\frac{\cosh^2u}{8}h^2-\mu\eps^\eta\Psi(u)F(\theta_1,\theta_2),
\end{equation}
where $\Psi(u)$ is the function defined in \eqref{def:Psi}.
 
We devote the rest of the section to obtain a solution of equation
\eqref{eq:Existence:T1:QP} which is defined  in  $D^u_{\kk}\times\TT^2_\sigma$ and
satisfies the asymptotic condition \eqref{eq:AsymptCondFuncioGeneradora:uns}. Recall that $D^u_\kk$ has been defined in \eqref{def:DominisOuter:QP} and $\sigma$ in \eqref{def:BandesAngulars}.

We use analogous norms as the ones in the periodic case. For analytic functions $h:D^u_\kk\times\TT^2_\sigma\rightarrow \CC$,
 we define the Fourier norm
\[
 \|h\|_\sigma=\sum_{k\in\ZZ^2}\left\|h^{[k]}\right\| e^{|k_1|\sigma_1+|k_2|\sigma_2},
\]
where $\|\cdot\|$ is the norm defined in \eqref{def:Norma:Existencia}. We consider the Banach  space
\begin{equation}\label{def:Existence:banach:QP}
 \EE_{\kk,\sigma}=\left\{h:D^u_\kk\times\TT^2_\sigma\rightarrow \CC;
\text{real-analytic}, \|h\|_\sigma<\infty\right\}.
\end{equation}

First we solve the
equation $\LL_\eps h=g$, where $\LL_\eps$ is the differential operator
defined in \eqref{def:Lde:QP}. This operator is  invertible in
$\EE_{\kk,\sigma}$ and its inverse can be defined as
\begin{equation}\label{def:operadorGInfty:QP}
\GG_\eps (h)(u,\theta)=\int_{-\infty}^0h(u+s,\theta_1+\eps\ii s,\theta_2+\ga\eps\ii s )\,dt.
\end{equation}

\begin{lemma}\label{lemma:Existence:PropietatsGInfty:QP}
The operator $\GG_\eps$  in \eqref{def:operadorGInfty:QP}   satisfies the
following properties.
\begin{enumerate}
\item $\GG_\eps$ is linear from $\EE_{\kk,\sigma}$ to
itself, commutes with $\pa_u$ and satisfies
$\LL_\eps\circ\GG_\eps=\mathrm{Id}$.
\item If $h\in\EE_{\kk,\sigma}$, then
\[
\left\|\GG_\eps(h)\right\|_{\sigma}\leq
K\|h\|_{\sigma}.
\]
Furthermore, one can bound each Fourier coefficient  $\GG^{[k]}_\eps(h)$ with $k\neq 0$ as
\[
 \left\| \GG^{[k]}(h)\right\|\leq \frac{K\eps}{|k\cdot \omega|}\left\|h^{[k]}\right\|.
\]
\item If $h\in\EE_{\kk,\sigma}$, then $\pa_u\GG_\eps(h)\in
\EE_{\kk,\sigma}$ and
\[
\left\|\pa_u\GG_\eps(h)\right\|_{\sigma}\leq
K\|h\|_{\sigma}.
\]
\end{enumerate}
\end{lemma}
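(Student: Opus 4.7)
The plan is to imitate the proof of Lemma \ref{lemma:Existence:PropietatsGInfty} (which in turn follows Lemma 5.5 of \cite{GuardiaOS10}), adapting the Fourier analysis from $\TT$ to $\TT^2$ and keeping explicit track of the small divisors $k\cdot\omega$. All three statements are proved harmonic by harmonic.

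Part (1) is direct. Linearity is obvious. Commutation with $\pa_u$ follows from differentiation under the integral sign, justified by the uniform convergence that the weighted norm \eqref{def:Norma:Existencia} provides (the factor $e^{2\Re u}$ gives decay as $\Re u\to -\infty$). The identity $\LL_\eps\circ \GG_\eps = \mathrm{Id}$ reduces to the chain-rule observation
\[
\LL_\eps\bigl[h(u+s,\theta_1+s/\eps,\theta_2+\gamma s/\eps)\bigr] \;=\; \frac{d}{ds}\bigl[h(u+s,\theta_1+s/\eps,\theta_2+\gamma s/\eps)\bigr],
\]
combined with the fundamental theorem of calculus and the decay of $h$ as $\Re s\to -\infty$.

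For part (2), Fourier-expanding in $\theta$ gives
\[
\GG_\eps(h)^{[k]}(u) \;=\; \int_{-\infty}^0 e^{ik\cdot\omega s/\eps}\, h^{[k]}(u+s)\,ds.
\]
The basic bound $\|\GG_\eps(h)\|_\sigma\le K\|h\|_\sigma$ is obtained by splitting the integration at $\Re(u+s)=-U$: on the left we use the factor $e^{2\Re(u+s)}$, integrable in $s$, and on the right we use the polynomial weight together with the fact that for $u\in D^u_\kk$ the ray $\{u+s:s\le 0\}$ stays inside $D^u_\kk$, so the weight along the path is comparable to the weight at $u$ up to a constant depending only on $\kk$. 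Summing over $k\in\ZZ^2$ with the factors $e^{|k_1|\si_1+|k_2|\si_2}$ gives the global bound. For the refined estimate with $k\neq 0$, we integrate by parts in $s$,
\[
\GG^{[k]}_\eps(h)(u) \;=\; \frac{\eps}{ik\cdot\omega}\,h^{[k]}(u) \;-\; \frac{\eps}{ik\cdot\omega}\int_{-\infty}^0 e^{ik\cdot\omega s/\eps}\,(h^{[k]})'(u+s)\,ds,
\]
where the boundary term at $-\infty$ vanishes thanks to the exponential decay built into the weight. The first term is already of the claimed size. For the integral, a Cauchy estimate on a slightly inflated sub-domain contained in $D^u_\kk$ controls $(h^{[k]})'$ by $\|h^{[k]}\|$ (losing only a harmless $\kk$-dependent constant), after which the same path-estimate as above closes the bound.

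For part (3), one differentiates under the integral sign to get $\pa_u\GG_\eps(h) = \GG_\eps(\pa_u h)$, bounds $\pa_u h$ by a Cauchy estimate on a slightly shrunken copy of $D^u_\kk$, and applies the bound from part (2). The main obstacle throughout is the geometric bookkeeping near the singularities $\rr_\pm,\ol\rr_\pm$ of the weight: one has to verify that for every $u\in D^u_\kk$ the integration ray $\{u+s:s\le 0\}$ lies in the analyticity domain and that the polynomial weight factors along the ray are uniformly comparable to those at $u$. This follows from a direct inspection of the conical domain \eqref{def:DominisOuter:QP}, and yields constants that depend on $\kk$ and the opening angle $\beta_1$ but not on $\eps$ or $\al$.
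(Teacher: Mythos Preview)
Your overall strategy (Fourier decomposition in $\theta$, then integration by parts for the refined bound) is the right one and is what the paper is pointing to when it says the proof is analogous to Lemma~5.5 of \cite{GuardiaOS10}. However, your argument for part~(3) has a genuine gap, and the geometric claim in your last paragraph is not the correct one.

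For part~(3), writing $\pa_u\GG_\eps(h)=\GG_\eps(\pa_u h)$ and then controlling $\pa_u h$ by a Cauchy estimate does not give a uniform constant. Near the upper vertex of $D^u_\kk$ the available Cauchy radius at a point $v$ is only of order $|v-\rr_-|\sin\beta_1$ (hence $\gtrsim\kk\sqrt\eps$), so $\pa_u h^{[k]}$ picks up one extra order of pole at $\rr_-$; in the norm \eqref{def:Norma:Existencia} this means $\|\pa_u h^{[k]}\|$ is only bounded by $K\eps^{-1/2}\|h^{[k]}\|$, and feeding that into the basic bound of part~(2) loses a factor $\eps^{-1/2}$. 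The clean route is to use the identity that already comes from $\LL_\eps\circ\GG_\eps=\mathrm{Id}$,
\[
\pa_u\GG^{[k]}(h)\;=\;h^{[k]}\;-\;\frac{ik\cdot\omega}{\eps}\,\GG^{[k]}(h),
\]
and then invoke the \emph{refined} bound from part~(2). For part~(2) refined itself, your integration by parts is correct, but note that the remaining integral equals $\pa_u\GG^{[k]}(h)$, so what is actually needed is an independent estimate $\|\pa_u\GG^{[k]}(h)\|\le K\|h^{[k]}\|$. This follows from the smoothing effect of the $s$–integration: the Cauchy bound $|(h^{[k]})'(v)|\le C\|h^{[k]}\|\,|v-\rr_-|^{-3}$ combined with the elementary integral
\[
\int_{-\infty}^{0}\frac{ds}{|u+s-\rr_-|^{3}}\;\le\;\frac{C(\beta_1)}{|u-\rr_-|^{2}},\qquad u\in D^u_\kk,
\]
(which the conical shape of $D^u_\kk$ guarantees) exactly restores the weight. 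Your sentence ``the polynomial weight factors along the ray are uniformly comparable to those at $u$'' is not true (the weight grows without bound along the ray); the correct statement is this integral inequality. Finally, the phrase ``slightly inflated sub-domain contained in $D^u_\kk$'' is self-contradictory; presumably you mean a Cauchy disc contained in $D^u_\kk$.
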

\begin{proof}
The proof is analogous to the proof of Lemma 5.5 of \cite{GuardiaOS10}
\end{proof}

We can obtain solutions of equation \eqref{eq:Existence:T1:QP}
using a fixed point argument. Theorem \ref{th:ExistenceManifolds:QP} is a
straightforward consequence of the following proposition.

\begin{proposition}\label{prop:ExistenceT1:PtFix:QP}
Let us fix $\kk_1>0$. There exists $\eps_0>0$ such that for any $\eps\in
(0,\eps_0)$ and $\al\in (0,1)$ satisfying condition \eqref{cond:ExpSmall} and that $\eps^{\eta-1}\frac{\sqrt{\eps}+\sqrt{\al}}{1-\al}$
is small enough, there exists a function $Q$ defined in
$D^u_{\kk_1}\times\TT^2_\sigma$ such that $\pa_u Q\in \EE_{\kk_1,\sigma}$ is a
fixed point of the operator
\begin{equation}\label{def:ExistenceT1:Operator:QP}
 \ol \FF^u(h)=\pa_u\GG_\eps\FF(h),
\end{equation}
where $\GG_\eps$ and $\FF$ are the operators defined in
\eqref{def:operadorGInfty:QP} and \eqref{def:Existence:OperadorRHS:QP} respectively.
Furthermore, there exists a constant $b_5>0$ such that,
\[
\begin{split}
 \left\|\pa_uQ\right\|_\sigma&\leq b_5|\mu|\eps^{\eta},\\
 \left\|\pa^2_uQ\right\|_\sigma&\leq b_5|\mu|\eps^{\eta-\frac{1}{2}}.
\end{split}
\]
Moreover, if we consider the half Melnikov function defined in
\eqref{def:HalfMelnikov:uns:QP}, 
\begin{equation}\label{eq:CotaGeneradoraMenysMelnikov:QP}
 \left\| \pa_u Q-\mu\eps^\eta\MM^u\right\|_\sigma\leq b_5|\mu|^2\eps^{2\eta-1}
\frac{\sqrt{\eps}+\sqrt{\al}}{1-\al}.
\end{equation}
\end{proposition}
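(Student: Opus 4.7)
The plan is to apply a Banach fixed-point argument to the operator $\ol\FF^u$ in \eqref{def:ExistenceT1:Operator:QP} on a closed ball of $\EE_{\kk_0,\sigma}$ for some $\kk_0\in(0,\kk_1)$, and then recover $\pa_u^2 Q$ by Cauchy estimates between $D^u_{\kk_1}\subset D^u_{\kk_0}$. This follows the line of Proposition \ref{prop:ExistenceT1:PtFix}, but three features of the quasiperiodic setting explain the shifted exponents: (i) the domain \eqref{def:DominisOuter:QP} stays only $\OO(\sqrt{\eps})$ away from $\rr_-$ instead of $\OO(\eps)$; (ii) the uniform $\eps$-gain used in the periodic zero-mean estimate is replaced by the per-harmonic gain $\eps/|k\cdot\omega|$ of Lemma \ref{lemma:Existence:PropietatsGInfty:QP}(2); (iii) the sum over $k\in\ZZ^2$ must be controlled by combining \eqref{hyp:QP1} with the diophantine properties of $\omega=(1,\ga)$.

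For the seed estimate, since $\FF(0,u,\theta)=-\mu\eps^\eta\Psi(u)F(\theta_1,\theta_2)$ and $\pa_u$ commutes with $\GG_\eps$, one has $\ol\FF^u(0)=-\mu\eps^\eta\GG_\eps(\Psi'(u)F)$. A direct computation gives $\Psi'(u)=4\beta(u)$ with $\beta$ as in \eqref{def:MelnikovIntegrand}, so that in fact $\ol\FF^u(0)=\mu\eps^\eta \MM^u$ with $\MM^u$ the half Melnikov function \eqref{def:HalfMelnikov:uns:QP}; using \eqref{def:beta:singularity} and \eqref{eq:Cota:Singularitat:0} one gets $\|\Psi'\|\leq K$. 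Expanding in Fourier modes, applying Lemma \ref{lemma:Existence:PropietatsGInfty:QP}(2) harmonic by harmonic, and combining $|F^{[k]}|\leq Ke^{-r_1|k_1|-r_2|k_2|}$ from \eqref{hyp:QP1} with the strips $\sigma_i=r_i-d_i\sqrt{\eps}$ of \eqref{def:BandesAngulars} and the diophantine bounds for the golden mean $\ga$ along the lines of \cite{DelshamsGJS97}, one obtains $\|\ol\FF^u(0)\|_\sigma\leq\tfrac{b_5}{2}|\mu|\eps^\eta$.

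For the contraction, given $h_1,h_2\in\ol B(b_5|\mu|\eps^\eta)\subset\EE_{\kk_0,\sigma}$,
\[
\ol\FF^u(h_2)-\ol\FF^u(h_1)=-\pa_u\GG_\eps\left(\tfrac{\cosh^2 u}{8}(h_1+h_2)(h_2-h_1)\right).
\]
Because \eqref{def:DominisOuter:QP} is $\OO(\sqrt{\eps})$-close to $\rr_-$, the analogues of \eqref{eq:Cota:Singularitat} and \eqref{eq:Cota:Singularitat:2} are
\[
\left|\frac{u-i\pi/2}{u-\rr_-}\right|\leq 1+K\frac{\sqrt{\al}}{\sqrt{\eps}},\qquad \left|\frac{1}{(u-\rr_-)(u-\ol\rr_-)^2}\right|\leq \frac{K}{\sqrt{\eps}(1-\al)},
\]
and, combined with Lemma \ref{lemma:Existence:PropietatsGInfty:QP}(3), they give $\mathrm{Lip}\,\ol\FF^u\leq K|\mu|\eps^{\eta-1}\frac{\sqrt{\eps}+\sqrt{\al}}{1-\al}<\tfrac{1}{2}$ by the smallness hypothesis. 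Hence $\ol\FF^u$ admits a unique fixed point $h^*\in \ol B(b_5|\mu|\eps^\eta)$, and $Q(u,\theta)=\int_{-\infty}^u h^*(v,\theta)\,dv$ satisfies the asymptotic condition and the claimed bound for $\pa_u Q$; Cauchy's estimates across the gap between $D^u_{\kk_1}$ and $D^u_{\kk_0}$, of thickness $\OO(\sqrt{\eps})$, give the bound for $\pa_u^2 Q$. Finally, \eqref{eq:CotaGeneradoraMenysMelnikov:QP} follows from $\|\pa_u Q-\mu\eps^\eta\MM^u\|_\sigma=\|\ol\FF^u(h^*)-\ol\FF^u(0)\|_\sigma\leq(\mathrm{Lip}\,\ol\FF^u)\,\|h^*\|_\sigma$. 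The main obstacle is the Fourier estimate for $\ol\FF^u(0)$: summing the per-harmonic small-divisor factor $\eps/|k\cdot\omega|$ against $|F^{[k]}|e^{|k_1|\sigma_1+|k_2|\sigma_2}\leq Ke^{-d\sqrt{\eps}|k|}$ requires a careful arithmetic analysis of $\ga$ through its continued fractions, precisely as in the treatment of the Melnikov function in \cite{DelshamsGJS97}.
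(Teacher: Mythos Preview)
Your proposal is correct and follows essentially the same route as the paper's proof: a Banach fixed point argument on a ball in $\EE_{\kk_0,\sigma}$ for some $\kk_0<\kk_1$, with the seed $\ol\FF^u(0)=\mu\eps^\eta\MM^u$ bounded harmonic by harmonic using Lemma \ref{lemma:Existence:PropietatsGInfty:QP}(2), the Lipschitz constant controlled through the $\sqrt{\eps}$-versions of \eqref{eq:Cota:Singularitat}--\eqref{eq:Cota:Singularitat:2}, and the bounds on $\pa_u^2Q$ and on $\pa_uQ-\mu\eps^\eta\MM^u$ recovered exactly as you indicate. The only point where the paper is more explicit than your sketch is the Fourier sum $\sum_{k\neq 0}|k\cdot\omega|^{-1}e^{-d\sqrt{\eps}|k|}$: rather than invoking the continued-fraction machinery of \cite{DelshamsGJS97}, the paper simply splits into $|k\cdot\omega|>1/2$ (bounded directly by the geometric series, giving $K/\eps$) and $|k\cdot\omega|<1/2$ (where for each $k_2$ there is at most one admissible $k_1$, and the diophantine bound $|k\cdot\omega|\geq C/|k|$ reduces the sum to $\sum_{k_2}|k_2|e^{-c\sqrt{\eps}|k_2|}\leq K/\eps$), so the arithmetic input is in fact quite mild.
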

\begin{proof}
Let us consider $\kk_0<\kk_1$. It is straightforward to see that $\ol\FF^u$ is
well defined from $\EE_{\kk_0,\sigma}$ to itself. We are going to prove that
there exists a constant $b_5>0$ such that $\ol\FF^u$ sends $\ol
B(b_5|\mu|\eps^{\eta})\subset\EE_{\kk_0,\sigma}$ to itself and is contractive
there.

Let us first consider $\ol\FF^u(0)$. From the definition of $\ol\FF^u$ in
\eqref{def:ExistenceT1:Operator:QP}, the definition of $\FF$ in
\eqref{def:Existence:OperadorRHS:QP} and using Lemma
\ref{lemma:Existence:PropietatsGInfty:QP}, we have that
\[
 \ol\FF^u(0)(u,\theta_1,\theta_2)=\pa_u\GG_\eps\FF(0)(u,
\theta_1,\theta_2)=-\mu\eps^\eta\GG_\eps\left(\Psi'(u)F(\theta_1,\theta_2)\right).
\]
To bound it, we bound first each Fourier coefficient and we take advantage of the fact that $\langle F\rangle=0$. By Lemma \ref{lemma:Existence:PropietatsGInfty:QP}, formula \eqref{hyp:QP1} and recalling that $\Psi'(u)=\beta(u)$, where $\beta$ is
the function defined in \eqref{def:MelnikovIntegrand}, satisfies $\|\Psi'\|\leq K$, we have that
\[
\left\|  \ol\FF^u(0)^{[k]}\right\|\leq \frac{K|\mu|\eps^{\eta+1}}{|k\cdot\omega|}e^{-r_1|k_1|-r_2|k_2|}
\]
and therefore we have that 
\[
\left\|  \ol\FF^u(0)\right\|_\sigma\leq K|\mu|\eps^{\eta+1}\sum_{k\in\ZZ\setminus\{0\}}\frac{1}{|k\cdot\omega|}e^{-d_1|k_1|\sqrt{\eps}-d_2|k_2|\sqrt{\eps}}.
\]
To bound this sum, we split it in two depending $|k\cdot\omega|>1/2$ or  $|k\cdot\omega|<1/2$. For the first one,
\[
 \sum_{\substack{k\in\ZZ^2\setminus\{0\}\\|k\cdot\omega|>1/2}}\frac{1}{|k\cdot\omega|}e^{-d_1|k_1|\sqrt{\eps}-d_2|k_2|\sqrt{\eps}}\leq  2\sum_{k\in\ZZ^2\setminus\{0\}}e^{-d_1|k_1|\sqrt{\eps}-d_2|k_2|\sqrt{\eps}}\leq \frac{K}{\eps}.
\]
For the second one, we take into account that, for a fixed $k_2$, there exists only one $k_1$ satisfying $|k\cdot\omega|<1/2$. Moreover, it satisfies
\[
 -|k_1|\leq -\ga |k_2|+|k\cdot\omega|\leq -\ga |k_2|+\frac{1}{2}.
\]
Therefore, with this inequality and taking into account that for any $k\in\ZZ^2\setminus\{0\}$,
\[
|k\cdot\omega|>\frac{C}{|k|}  
 \]
for certain $C>0$, we can see that 
\[
 \sum_{\substack{k\in\ZZ^2\setminus\{0\}\\|k\cdot\omega|<1/2}}\frac{1}{|k\cdot\omega|}e^{-d_1|k_1|\sqrt{\eps}-d_2|k_2|\sqrt{\eps}}\leq  K\sum_{k_2\in\ZZ\setminus\{0\}}|k_2|e^{-(d_1\ga+d_2)|k_2|\sqrt{\eps}+\frac{d_1}{2}\sqrt{\eps}}\leq \frac{K}{\eps}.
\]
Then, one can easily see that there exists a
constant $b_5>0$ such that
\[
 \left\|\ol\FF^u(0)\right\|_\si\leq\frac{b_5}{2}|\mu|\eps^{\eta}.
\]
To bound the  Lipschitz constant, let us consider  $h_1,h_2\in \ol
B(b_5|\mu|\eps^{\eta})\in\EE_{\kk_0,\sigma}$. Then, one can proceed as in the periodic case recalling that now
\[
\left|\frac{u-i\pi/2}{u-\rr_-} \right|\leq
1+\left|\frac{i\pi/2-\rr_-}{u-\rr_-} \right|\leq 1+K\frac{\sqrt{\al}}{\sqrt{\eps}}
\]
and
\[
 \left|\frac{1}{u-\ol\rr_-} \right|\leq\frac{K}{\sqrt{1-\al}}.
\]
Therefore,
\[
\begin{split}
\left \|\ol \FF^u(h_2)-\ol \FF^u(h_1)\right\|_\si&\leq
\frac{K}{\sqrt{\eps}(1-\al)}\left(1+\frac{\sqrt{\al}}{\sqrt{\eps}}\right)\|h_2+h_1\|_\si\|h_2-h_1\|_\si\\
&\leq K|\mu|\eps^{\eta-1}\frac{\sqrt{\eps}+\sqrt{\alpha}}{1-\al}\|h_2-h_1\|_\si.
\end{split}
\]
Then, using that by hypothesis $\eps^{\eta-1}\frac{\sqrt{\eps}+\sqrt{\alpha}}{1-\al}$ is
small enough, 
\[
\mathrm{Lip}\ol\FF^u=K|\mu|\eps^{\eta-1}\frac{\sqrt{\eps}+\sqrt{\alpha}}{1-\al}<1/2
\] and therefore $\ol\FF^u$ is contractive
from the ball $\ol B(b_5|\mu|\eps^{\eta})\subset \EE_{\kk_0,\sigma}$ into
itself, and it has a unique fixed point $h^\ast$. Moreover, since it has
exponential decay as $\Re u\rightarrow -\infty$, we can take
\[
Q(u,\tau)=\int_{-\infty}^u h^\ast (v,\tau)dv.
\]
To obtain the bound for $\pa_u^2Q$, it is enough to apply Cauchy estimates to
the nested domains $D_{\kk_1}^u\subset D_{\kk_0}^u$ (see, for instance,
\cite{GuardiaOS10}), and rename $b_5$ if necessary.

Finally, to prove \eqref{eq:CotaGeneradoraMenysMelnikov}, it is enough to point
out that $\ol\FF^u(0)=\mu\eps^\eta\MM^u$ and consider the obtained bound for the
Lipschitz constant.
\end{proof}

\subsubsection{Straightening the operator $\wt\LL_\eps$: proof of Theorem
\ref{th:Canvi:QP}}\label{sec:Canvi:QP}
As we have explained in the periodic case, we just need to 
look for a function $\CCC$ solution of the equation
\[
 \LL_\eps\CCC(v,\theta_1,\theta_2)=\left.\frac{\cosh^2u}{8}\left(\pa_uT^s(u,\theta_1,\theta_2)+\pa_u
T^u(u,\theta_1,\theta_2)\right)\right|_{u=v+\CCC(v,\theta_1,\theta_2)}-1.
\]
Taking into account the definition of $T_0$ and $Q$ in \eqref{def:T0} and
\eqref{def:T1:QP}, this equation can be written as
\begin{equation}\label{eq:EDPCanvi:QP}
 \LL_\eps\CCC=\JJ(\CCC)
\end{equation}
where
\begin{equation}\label{def:EDPCanvi:RHS:QP}
\JJ(h)(v,\theta_1,\theta_2)= \left.\frac{\cosh^2u}{8}\left(\pa_uQ^s(u,\theta_1,\theta_2)+\pa_u
Q^u(u,\theta_1,\theta_2)\right)\right|_{u=v+h(v,\theta_1,\theta_2)}.
\end{equation}
To look for a solution of this equation, we start by defining some norms and
Banach spaces. Given $n\in \NN$ and an analytic function
$h:R_{\kk}\times\TT^2_\sigma\rightarrow \CC$, we define the  Fourier norm
\[
\|h\|_{n,\sigma}=\sum_{k\in\ZZ^2}\left\|h^{[k]}\right\|_{n}e^{|k_1|\si_1+|k_2|\si_2},
\]
where $\|\cdot\|_n$ is the norm defined in \eqref{def:Norma:Canvi}. Thus, we introduce the following Banach spaces
\[
 \XX_{n,\sigma}=\{
h:R_\kk\times\TT^2_\sigma\rightarrow\CC;\,\,\text{real-analytic},
\|h\|_{n,\sigma}<\infty\}.
\]

To find a right-inverse of the operator $\LL_\eps$ in \eqref{def:Lde:QP} in
$R_{\kk}\times\TT_\sigma^2$ we consider $u_1$ the upper vertex of $R_\kk$ and  $u_0$ the left
endpoint of $R_\kk$. Then, we define the operator
$\wt\GG_\eps$ as
\begin{equation}\label{def:operador:canvi:QP}
\wt\GG_\eps(h)(v, \theta)=\sum_{k\in\ZZ^2}\wt\GG_\eps(h)^{[k]}(v)e^{ik\cdot \theta},
\end{equation}
 where its Fourier coefficients are given by
\begin{align*}
\dps \wt\GG_\eps(h)^{[k]}(v)&=\int_{-v_1}^v
e^{i\frac{k\cdot\omega}{\eps}(s-v)}h^{\left[k\right]}(s)\,ds&\textrm{ if }k\cdot\omega<0\\
\dps \wt\GG_\eps(h)^{[0]}(v)&= \int_{v_0}^{v} h^{\left[0\right]}(s)\,ds&\\
\dps \wt\GG_\eps(h)^{[k]}(v)&= -\int^{v_1}_v
e^{i\frac{k\cdot\omega}{\eps}\omega(s-v)}h^{\left[k\right]}(s)\,ds&\textrm{ if }k\cdot\omega>0.
\end{align*}
The following lemma, which can be proved analogously to Lemma 8.3 of \cite{GuardiaOS10}, gives some properties of this operator.

\begin{lemma}\label{lemma:Canvi:Operador:QP}
The operator $\wt\GG_\eps$ in \eqref{def:operador:canvi:QP} satisfies if $h\in\XX_{1,\sigma}$, then $\wt\GG_\eps(h)\in
\XX_{0,\sigma}$ and
\[
\left\|\wt\GG_\eps(h)\right\|_{0,\sigma}\leq K\frac{|\ln\eps|}{\sqrt{1-\al}}\|h\|_{1,\sigma}.
\]
\end{lemma}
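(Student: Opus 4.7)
The strategy is to prove the per-harmonic bound $\|\wt\GG_\eps(h)^{[k]}\|_0\leq K|\ln\eps|(1-\al)^{-1/2}\|h^{[k]}\|_1$ uniformly in $k\in\ZZ^2$, and then to conclude by summing over $k$, since the Fourier weight $e^{|k_1|\si_1+|k_2|\si_2}$ appears identically on both sides of the desired inequality.

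I first fix $k\in\ZZ^2$ and $v\in R_\kk$, and select a polygonal integration path $\gamma_k(v)\subset R_\kk$ from the appropriate endpoint (the left real point $v_0$ if $k=0$; the lower vertex of $R_\kk$ if $k\cdot\omega<0$; the upper vertex $v_1$ if $k\cdot\omega>0$) to $v$, chosen so that $(k\cdot\omega)\,\Im(s-v)\geq 0$ along $\gamma_k$. This makes $|e^{i(k\cdot\omega/\eps)(s-v)}|\leq 1$ and is precisely the device used in the periodic Lemma~\ref{lemma:Canvi:Operador}; it is geometrically realisable because $R_\kk$ is a rhombus with vertices aligned with the required directions. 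With this choice,
\[
\bigl|\wt\GG_\eps(h)^{[k]}(v)\bigr|\leq \|h^{[k]}\|_1\int_{\gamma_k(v)}\frac{|ds|}{|s-\rr_-|\,|s-\ol\rr_-|},
\]
and the whole problem reduces to the $k$-independent geometric estimate
\[
\sup_{v\in R_\kk}|(v-\rr_-)(v-\ol\rr_-)|\int_{\gamma_k(v)}\frac{|ds|}{|s-\rr_-|\,|s-\ol\rr_-|}\leq K\frac{|\ln\eps|}{\sqrt{1-\al}}.
\]

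The two ingredients of this geometric bound are the factor $(1-\al)^{-1/2}$, coming from $\Im\rr_-\sim\sqrt{1-\al}$ by \eqref{eq:Expansio:Sing:alfa1} together with the standard resonance estimate $\int ds/((s-\Re\rr_-)^2+(\Im\rr_-)^2)\leq \pi/\Im\rr_-$, and the logarithmic factor $|\ln\eps|$, which is produced by the radial integral $\int|ds|/|s-\rr_-|$ taken from the minimal distance $\kk\sqrt{\eps}$ allowed by $R_\kk$ out to $\OO(\Im\rr_-)$; this yields $\ln(\Im\rr_-/\sqrt{\eps})=\OO(|\ln\eps|)$ under the standing hypothesis \eqref{cond:ExpSmall}, which guarantees $\Im\rr_-\gg\sqrt{\eps}$. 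The main obstacle is verifying the uniform bound on the product $|(v-\rr_-)(v-\ol\rr_-)|\cdot(\text{integral})$ as $v$ varies over $R_\kk$: near the top or bottom vertex of $R_\kk$ the weight shrinks to $\OO(\sqrt{\eps(1-\al)})$ while the integral bloats logarithmically, so one has to balance the two contributions explicitly; on the real axis the weight is $\OO(1)$ but the path must cross the resonant strip near $\rr_-$. Once this case analysis is settled, summing the per-harmonic bound against the common Fourier weight completes the proof, without any small-divisor complication, since the choice of $\gamma_k$ has already rendered the exponential factor harmless.
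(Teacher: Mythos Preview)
Your overall strategy---bounding each Fourier coefficient separately after choosing an integration path along which $|e^{i(k\cdot\omega/\eps)(s-v)}|\le 1$, and then reducing to a purely geometric estimate on $\int_{\gamma_k(v)}|ds|/(|s-\rr_-|\,|s-\ol\rr_-|)$---is the standard one, and it is essentially what the paper has in mind: the paper gives no proof here and simply remarks that the argument is analogous to Lemma~8.3 of \cite{GuardiaOS10}.

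There is, however, a slip in the displayed geometric estimate. The norm $\|\cdot\|_0$ carries \emph{no} weight (it is just the sup norm on $R_\kk$), so the prefactor $|(v-\rr_-)(v-\ol\rr_-)|$ should not appear. What you actually need is
\[
\sup_{v\in R_\kk}\int_{\gamma_k(v)}\frac{|ds|}{|s-\rr_-|\,|s-\ol\rr_-|}\;\le\; K\,\frac{|\ln\eps|}{\sqrt{1-\al}}.
\]
Your subsequent discussion about the ``weight shrinking to $\OO(\sqrt{\eps(1-\al)})$ near the vertices'' and ``balancing'' it against the integral is therefore based on this misreading and is unnecessary. Once the spurious weight is removed, the worst case is simply a path that runs the full height of the rhombus and passes within distance $\kk\sqrt{\eps}$ of $\rr_-$ (or $\ol\rr_-$): there one uses $|s-\ol\rr_-|\ge c\,\Im\rr_-\sim\sqrt{1-\al}$ on the upper half and integrates $|ds|/|s-\rr_-|$ radially from $\kk\sqrt{\eps}$ out to $\OO(\Im\rr_-)$ to pick up the $|\ln\eps|$; the remaining part of the path, away from both singularities, contributes $\OO(1)$. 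This gives the claimed bound directly, with no case analysis between vertices and real axis.
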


Next, Theorem \ref{th:Canvi:QP} is a straightforward consequence of the following
proposition.

\begin{proposition}\label{prop:Canvi:PtFix:QP}
Let us consider the constant $\kk_1>0$ defined in Theorem
\ref{th:ExistenceManifolds:QP} and let us consider any $\kk_2>\kk_1$. There exists
$\eps_0>0$ such that for any $\eps\in (0,\eps_0)$ and $\al\in (0,1)$ satisfying condition \eqref{cond:ExpSmall} and 
that $\eps^{\eta-1}\frac{\sqrt{\eps}+\sqrt{\al}}{1-\al}$ is small enough, there exists a function
$\CCC\in \XX_{1,\sigma}$ defined in $R_{\kk_2}\times\TT^2_\sigma$ such that  is a
fixed point of the operator
\begin{equation}\label{def:Canvi:Operator:QP}
 \ol \JJ(h)=\wt\GG_\eps\JJ(h),
\end{equation}
where $\wt\GG_\eps$ and $\JJ$ are the operators defined in
\eqref{def:operador:canvi:QP} and \eqref{def:EDPCanvi:RHS:QP} respectively.
Furthermore, $v+\CCC(v,\theta_1,\theta_2)\in R_{\kk_1}$ for $(v,\theta_1,\theta_2)\in
R_{\kk_2}\times\TT^2_\sigma$ and there exists a constant $b_6>0$ such that
\[
\begin{split}
 \left\|\CCC\right\|_{0,\sigma}\leq
b_6|\mu|\eps^{\eta-\frac{1}{2}}\frac{\sqrt{\eps}+\sqrt{\al}}{1-\al}|\ln\eps|\\
 \left\|\pa_v\CCC\right\|_{0,\sigma}\leq
b_6|\mu|\eps^{\eta-1}\frac{\sqrt{\eps}+\sqrt{\al}}{1-\al}|\ln\eps|.
\end{split}
\]
\end{proposition}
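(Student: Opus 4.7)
The plan is to adapt the Banach fixed point argument in the proof of Proposition \ref{prop:Canvi:PtFix} to the quasiperiodic setting. I would introduce $\rho = b_6|\mu|\eps^{\eta-\frac{1}{2}}\frac{\sqrt{\eps}+\sqrt{\al}}{1-\al}|\ln\eps|$ with $b_6>0$ to be fixed later, and show that $\ol\JJ = \wt\GG_\eps\circ\JJ$ sends the closed ball $\ol B(\rho)\subset \XX_{0,\sigma}$ into itself and is $\tfrac{1}{2}$-contractive there. Since $\XX_{0,\sigma}\subset \XX_{1,\sigma}$, the resulting fixed point $\CCC$ belongs to $\XX_{1,\sigma}$ as required. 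The well-posedness of $\ol\JJ$ on this ball relies on the observation that, for $h\in \ol B(\rho)$, the composition $\JJ(h)$ naturally sits in $\XX_{1,\sigma}$, and its image under $\wt\GG_\eps$ lies in $\XX_{0,\sigma}$ by Lemma \ref{lemma:Canvi:Operador:QP}.

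To bound $\|\ol\JJ(0)\|_{0,\sigma}$, I would split $\ol\JJ(0) = \BB_1 + \BB_2$ following the periodic template:
\[
\BB_1 = \wt\GG_\eps\!\left(\frac{\cosh^2 v}{8}\bigl(\ol\FF^u(0) + \ol\FF^s(0)\bigr)\right),
\]
\[
\BB_2 = \wt\GG_\eps\!\left(\frac{\cosh^2 v}{8}\bigl[(\ol\FF^u(\pa_v Q^u) - \ol\FF^u(0)) + (\ol\FF^s(\pa_v Q^s) - \ol\FF^s(0))\bigr]\right).
\]
For $\BB_1$, the identity $\ol\FF^u(0) = \mu\eps^\eta\MM^u$ (and its $s$-analog) together with the bound $\|\ol\FF^{u,s}(0)\|_\sigma \leq K|\mu|\eps^\eta$ from Proposition \ref{prop:ExistenceT1:PtFix:QP}, combined with the estimates $|v-i\pi/2|\leq K(|v-\rr_-|+\sqrt{\al})$ and $|v-\rr_-|,|v-\ol\rr_-|\geq \kk_1\sqrt{\eps}$ on $R_{\kk_1}$ (which follow from \eqref{def:DominisOuter:QP}, \eqref{eq:Expansio:Sing} and \eqref{eq:Expansio:Sing:alfa1}), controls $\|\frac{\cosh^2 v}{8}(\ol\FF^u(0)+\ol\FF^s(0))\|_{1,\sigma}$; applying Lemma \ref{lemma:Canvi:Operador:QP} and exploiting the Fourier-coefficient-wise small-divisor gain $\eps/|k\cdot\omega|$ (as already used in the proof of Proposition \ref{prop:ExistenceT1:PtFix:QP}) then produces the target contribution of order $\tfrac{b_6}{4}|\mu|\eps^{\eta-\frac{1}{2}}\frac{\sqrt{\eps}+\sqrt{\al}}{1-\al}|\ln\eps|$. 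For $\BB_2$, the quadratic identity $\ol\FF^*(\pa_v Q^*) - \ol\FF^*(0) = -\pa_v\GG_\eps(\frac{\cosh^2 v}{8}(\pa_v Q^*)^2)$ together with the refined estimate \eqref{eq:CotaGeneradoraMenysMelnikov:QP} yields a genuinely higher-order-in-$\mu$ bound dominated by the $\BB_1$ bound under the smallness hypothesis on $\eps^{\eta-1}\frac{\sqrt{\eps}+\sqrt{\al}}{1-\al}$.

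For the Lipschitz constant I would use the mean value theorem together with the bound $\|\pa_u^2 Q^{u,s}\|_\sigma \leq b_5|\mu|\eps^{\eta-\frac{1}{2}}$ of Proposition \ref{prop:ExistenceT1:PtFix:QP} and Lemma \ref{lemma:Canvi:Operador:QP}, obtaining $\mathrm{Lip}(\ol\JJ|_{\ol B(\rho)}) \leq K|\mu|\eps^{\eta-1}\frac{\sqrt{\eps}+\sqrt{\al}}{1-\al}|\ln\eps| < \tfrac{1}{2}$ by hypothesis. The inclusion $v + \CCC(v,\theta_1,\theta_2) \in R_{\kk_1}$ follows from $\rho \ll \sqrt{\eps}$ and the fact that the layers $R_{\kk_2}$ and $R_{\kk_1}$ are separated by $(\kk_2-\kk_1)\sqrt{\eps}$. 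Finally, the bound on $\pa_v\CCC$ follows from Cauchy estimates on nested domains $R_{\kk_2} \subset R_{\kk'}$ with $\kk_1 < \kk' < \kk_2$, paying a factor of order $1/\sqrt{\eps}$ dictated by the $\sqrt{\eps}$-scaling in \eqref{def:DominisOuter:QP}; after renaming $b_6$ one obtains the announced estimate.

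The main obstacle compared with the periodic proof is tracking the extra logarithm $|\ln\eps|$ generated by the small divisors $|k\cdot\omega|^{-1}$ in the right-inverse (Lemma \ref{lemma:Canvi:Operador:QP}). This factor must be propagated through the decomposition $\BB_1+\BB_2$, the mean-value estimate for the Lipschitz constant, and the Cauchy step, and one must verify that the $\tfrac{1}{2}$-contraction still closes despite this loss. This is precisely why the hypothesis $\eps^{\eta-1}\frac{\sqrt{\eps}+\sqrt{\al}}{1-\al}\ll 1$ is sufficient: any fixed positive power of $\eps$ absorbs the logarithm and the argument closes.
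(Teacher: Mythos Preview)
Your overall architecture --- fixed point in a ball of size $\rho$, Lipschitz bound via the mean value theorem and the $\pa_u^2 Q^{u,s}$ estimate of Proposition~\ref{prop:ExistenceT1:PtFix:QP}, and Cauchy estimates for $\pa_v\CCC$ --- matches the paper's proof. The difference is in how you handle $\ol\JJ(0)$. The paper does \emph{not} decompose $\ol\JJ(0)=\BB_1+\BB_2$: it simply bounds $\JJ(0)=\tfrac{\cosh^2 v}{8}\bigl(\pa_v Q^s+\pa_v Q^u\bigr)$ in $\|\cdot\|_{1,\sigma}$ directly from the $\|\pa_u Q^{u,s}\|_\sigma\le b_5|\mu|\eps^\eta$ bound of Proposition~\ref{prop:ExistenceT1:PtFix:QP}, and then applies Lemma~\ref{lemma:Canvi:Operador:QP} once to obtain $\|\ol\JJ(0)\|_{0,\sigma}\le \tfrac{b_6}{2}|\mu|\eps^{\eta-1/2}\tfrac{\sqrt{\eps}+\sqrt{\al}}{1-\al}|\ln\eps|$. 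The split $\BB_1+\BB_2$ was essential in the \emph{periodic} proof only because Lemma~\ref{lemma:Canvi:Operador} provides a genuine $\eps$-gain on zero-average inputs, and one had to isolate the zero-average piece $\ol\FF^{u,s}(0)$ to exploit it. In the quasiperiodic case Lemma~\ref{lemma:Canvi:Operador:QP} offers no such refinement --- only the uniform $|\ln\eps|/\sqrt{1-\al}$ bound --- so the decomposition buys nothing. Relatedly, your appeal to a ``Fourier-coefficient-wise small-divisor gain $\eps/|k\cdot\omega|$'' for $\wt\GG_\eps$ is not supported by Lemma~\ref{lemma:Canvi:Operador:QP} as stated; that gain is proved for $\GG_\eps$ in Lemma~\ref{lemma:Existence:PropietatsGInfty:QP}, which is a different (infinite-endpoint) operator. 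Your argument still closes if you drop that appeal and use Lemma~\ref{lemma:Canvi:Operador:QP} as written on all of $\JJ(0)$ --- which is exactly the paper's shortcut.
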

\begin{proof}
It is straightforward to see that $\ol\JJ$ is well defined from $\XX_{1,\sigma}$
to itself. We are going to prove that there exists a constant $b_6>0$ such that
$\ol\JJ$ sends $\ol
B(b_6|\mu|\eps^{\eta-\frac{1}{2}}\frac{\sqrt{\eps}+\sqrt{\al}}{1-\al}|\ln\eps|)\subset\XX_{1,\sigma}$ to
itself and is contractive there.

Let us first consider $\ol\JJ(0)$. From the definition of $\ol\JJ$ in
\eqref{def:Canvi:Operator:QP}, the definition of $\JJ$ in \eqref{def:EDPCanvi:RHS:QP}, we have that
\[
 \ol\JJ(0)(v,\theta_1,\theta_2)=\wt\GG_\eps\JJ(0)(v,\theta_1,\theta_2)=\wt\GG_\eps\left(\frac{\cosh^2v}{8}
\left(\pa_vQ^s(v,\theta_1,\theta_2)+\pa_v Q^u(v,\theta_1,\theta_2)\right)\right).
\]
Then, it is enough to apply Lemma \ref{lemma:Canvi:Operador:QP} and Proposition \ref{prop:ExistenceT1:PtFix:QP} to see that there exists a constant $b_6>0$ such that
\[
 \left\| \ol\JJ(0)\right\|_{0,\si}\leq
\frac{b_6}{2}|\mu|\eps^{\eta-\frac{1}{2}}\frac{\sqrt{\eps}+\sqrt{\al}}{1-\al}|\ln\eps|.
\]
To bound the  Lipschitz constant, it is enough to apply the mean value theorem,
use the bound of $\pa^2_u Q$ of Proposition \ref{prop:ExistenceT1:PtFix:QP} and
Lemma \ref{lemma:Canvi:Operador:QP} to see that 
\[
 \mathrm{Lip}\leq K|\mu|\eps^{\eta-1}\frac{\sqrt{\eps}+\sqrt{\al}}{1-\al}|\ln\eps|.
\]
Then, using that $\eps^{\eta-1}\frac{\sqrt{\eps}+\sqrt{\al}}{1-\al}|\ln\eps|\ll 1$, the operator
$\ol\JJ$ is contractive from  $\ol
B(b_6|\mu|\eps^{\eta-\frac{1}{2}}\frac{\sqrt{\eps}+\sqrt{\al}}{1-\al}|\ln\eps|)\subset\XX_{1,\sigma}$ to
itself and it has a unique fixed point $\CCC$. Finally, to obtain a bound for
$\pa_v\CCC$ it is enough to apply Cauchy estimates reducing slightly the domain
and renaming $b_6$ if necessary.
\end{proof}

\begin{proof}[Proof of Theorem \ref{th:Canvi:QP}]
 Once we have proved Proposition \ref{prop:Canvi:PtFix:QP},  it only remains to
obtain the inverse change given by the function $\VV$, which is straightforward
using a fixed point argument.
\end{proof}

\subsection{Proof of Theorem \ref{th:Main:NonExp:QP}}\label{sec:SketchProof:QP:NonExp}
The first statement of Theorem \ref{th:Main:NonExp:QP} is a direct consequence of Corollary \ref{coro:CotaExpPetita:QP} taking $\al=1-C\eps^r$ with $r\in (0,2)$. Note that the condition $\eps^{\eta-1}\frac{\sqrt{\eps}+\sqrt{\al}}{1-\al}$ becomes, as in the periodic cas, $\eta>r+1$. The proof of the second  and third statements, which correspond to $r\geq 2$, are considerably simpler, since we do not need to prove any exponential smallness.  It follows the same lines as the proof of Theorem \ref{th:Main:NonExp}.

\begin{theorem}\label{th:ExistenceManifolds:NonExp:QP}
Let us fix $\kk_1>0$. Then, there exists $\eps_0>0$ such
that for $\eps\in(0,\eps_0)$, $\alpha=1-C\eps^r$ with $C>0$ and $r\geq 2$, $\mu\in B(\mu_0)$, if
$\eta-3r/2>0$, the
Hamilton-Jacobi equation \eqref{def:HJ1:QP} has  a unique
(modulo an additive constant) real-analytic solution in
$\DD^u_{\kk_1}\times\TT^2_\sigma$ satisfying the asymptotic
condition \eqref{eq:AsymptCondFuncioGeneradora:uns}.

Moreover, there exists a real constant $b_8>0$ independent of $\eps$
and $\mu$, such that for $(u,\theta_1,\theta_2)\in
\DD^{u}_{\kk_1}\times\TT^2_\sigma$,
\[
\left|\pa_u T^u(u,\theta_1,\theta_2)-\pa_u T_0(u)\right|\leq
b_8|\mu|\eps^{\eta-3r/2}.
\]
Furthermore, for $(u,\theta_1,\theta_2)\in
\DD_{\kk_1}^{u}\times\TT^2_\sigma$,
the generating function $T^u$  satisfies that
\begin{equation}\label{def:HalfMelnikov:uns:cota:NonExp:QP}
\left|\pa_u T^u(u,\theta_1,\theta_2)-\pa_u T_0(u)-\mu\eps^\eta\MM^u(u,\theta_1,\theta_2)\right|\leq
b_8|\mu|^2\eps^{2\eta-3r},
\end{equation}
where $\MM^u$ is the function defined in \eqref{def:HalfMelnikov:uns:QP}.
\end{theorem}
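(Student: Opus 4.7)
The plan is to adapt the proof of Theorem \ref{th:ExistenceManifolds:NonExp} to the quasiperiodic setting, combining it with the Banach space framework of Section \ref{sec:ExistenceManifolds:QP}. As in the periodic case, substituting the ansatz $Q = T - T_0$ into \eqref{def:HJ2:QP} yields $\LL_\eps Q = \FF(\pa_u Q, u, \theta_1, \theta_2)$ with $\LL_\eps$ and $\FF$ given by \eqref{def:Lde:QP} and \eqref{def:Existence:OperadorRHS:QP}. Since we are in the regime $r\geq 2$ where no exponential smallness is needed, I would drop the singularity weights of \eqref{def:Norma:Existencia} and work instead with the unweighted norm
\begin{equation*}
 \|h\| = \sup_{\DD^u_{\kk_1} \cap \{\Re u<-U\}} \bigl|e^{-2u}h(u)\bigr| + \sup_{\DD^u_{\kk_1} \cap \{\Re u > -U\}} |h(u)|,
\end{equation*}
extended to the angles by $\|h\|_\sigma = \sum_{k\in\ZZ^2} \|h^{[k]}\| \,e^{|k_1|\si_1+|k_2|\si_2}$ with $\si$ as in \eqref{def:BandesAngulars}, in the obvious Banach space analogous to \eqref{def:Existence:banach:QP}.

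The existence would then follow from a contraction mapping argument for $\ol\FF^u(h) = \pa_u \GG_\eps \FF(h)$ on the ball $\ol B(b_8|\mu|\eps^{\eta-3r/2})$. The key estimate is on the initial term $\ol\FF^u(0) = \mu\eps^\eta \MM^u$, which I would bound Fourier-coefficient by Fourier-coefficient via
\begin{equation*}
 \bigl|(\MM^u)^{[k]}(u)\bigr| \leq 4\,|F^{[k]}| \int_{-\infty}^0 |\beta(u+s)|\, ds.
\end{equation*}
Using \eqref{def:beta:singularity}, for $u$ in the real section of $\DD^u_{\kk_1}$ one has $|\beta(v)| \lesssim |v-\rr_-|^{-2}|v-\ol\rr_-|^{-2}$ near the singularities, and the change of variables $v - \Re\rr_- = (\Im\rr_-)\, t$ with $\Im\rr_- \sim \eps^{r/2}$ yields $\int_{-\infty}^0 |\beta(u+s)|\, ds \leq K(\Im\rr_-)^{-3} \leq K\eps^{-3r/2}$. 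Summing against \eqref{hyp:QP1} with the Fourier weights $e^{|k_1|\si_1+|k_2|\si_2}$, which converges because $\si_i<r_i$, gives $\|\ol\FF^u(0)\|_\sigma \leq \tfrac{b_8}{2}|\mu|\eps^{\eta-3r/2}$. The Lipschitz estimate comes from the quadratic term $\ol\FF^u(h_2)-\ol\FF^u(h_1) = -\pa_u\GG_\eps\bigl(\tfrac{\cosh^2 u}{8}(h_1+h_2)(h_2-h_1)\bigr)$; since $|\cosh u|$ is uniformly bounded on $\DD^u_{\kk_1}$ (recall $|\Im u|\leq \Im\rr_- = O(\eps^{r/2})$ and $\Re u$ lies in a bounded set), this produces $\mathrm{Lip}\,\ol\FF^u \leq K|\mu|\eps^{\eta-3r/2}$, which is a contraction under the hypothesis $\eta > 3r/2$. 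The first bound of the theorem follows since $\pa_u Q = h^*$, and \eqref{def:HalfMelnikov:uns:cota:NonExp:QP} follows from $h^* - \mu\eps^\eta \MM^u = \ol\FF^u(h^*) - \ol\FF^u(0)$ together with the Lipschitz bound.

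There is no serious obstacle: the argument is modest in scope and mainly consists of carrying out the two-dimensional Fourier bookkeeping and the polynomial estimate of $\int |\beta|$ near the singularity $\rr_-$. Importantly, small divisors $k\cdot\omega$ do not appear here, since $\GG_\eps$ is defined through the integral representation \eqref{def:operadorGInfty:QP} rather than by Fourier inversion, so no arithmetic condition on $\omega$ enters this part of the analysis. The condition $\eta > 3r/2$, forced by the Lipschitz estimate, is sharper than the $\eta > r+1$ of Theorem \ref{th:Main:NonExp:QP} precisely when $r \geq 2$, matching the regime of applicability of the present statement.
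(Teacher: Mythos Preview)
Your proposal is correct and follows essentially the same approach as the paper: the paper's own proof simply states that it follows the lines of Theorem~\ref{th:ExistenceManifolds:NonExp}, uses the unweighted norm~\eqref{def:Norma:NonExp}, bounds $\ol\FF(0)$ by $K|\mu|\eps^\eta\bigl(K+\int_{-U}^u |v-\rr_-|^{-2}|v-\ol\rr_-|^{-2}\,dv\bigr)\leq \tfrac{b_4}{2}|\mu|\eps^{\eta-3r/2}$, and observes that the Lipschitz constant is $\leq |\mu|\eps^{\eta-3r/2}$. Your additional remarks on the rescaling of the integral and on the absence of small divisors are accurate and give a bit more detail than the paper does, but the argument is the same.
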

\begin{proof}
It follows the same lines of the proof of Theorem \ref{th:ExistenceManifolds:NonExp}. We use the modified norm \eqref{def:Norma:NonExp} and we bound $\ol\FF(0)$ as
\[
\left\|\ol \FF(0)\right\|=K|\mu|\eps^\eta\left(K+\int_{-U}^u \frac{1}{|v-\rr_-|^2|v-\ol\rr_-|^2}\right) \leq \frac{b_4}{2}|\mu|\eps^{\eta-3r/2}.
\]
Finally, it is straightforward to see that $\ol\FF$ is contractive from the ball $\ol B(\mu\eps^{\eta-3r/2})$ to itself with Lipschitz constant equal to 
\[
 \mathrm{Lip}\leq |\mu|\eps^{\eta-3r/2},
\]
which gives the desired result.
\end{proof}

The function $T^s$ satisfies the same properties in the symmetric domain $\DD_{\kk_1}^s$.  From this theorem, the formula of the distance $\wt d(\theta_1,\theta_2)$ follows.

\section*{Acknowledgements}
The authors have been partially supported by the Spanish MCyT/FEDER grant
MTM2009-06973 and the Catalan SGR grant 2009SGR859. 
In addition, the research of M. G. has been supported by the
Spanish PhD grant FPU AP2005-1314. Part of this work was done while M. G. was doing stays in  the Pennsylvania State University and in the Fields Institute.  He wants to thank these institutions for their hospitality and support.

\appendix
\section{Some remarks on the singular periodic case}\label{sec:SingularCase}
We devote this section to give some remarks and conjectures about the singular
case, namely when the Melnikov function does not predict correctly the splitting of
separatrices. We restrict this discussion to the case $0<\al\leq \al_0<1$ for any fixed $\al_0$, where Theorem \ref{th:Main} holds.

The main point in the proof of the exponentially small splitting of separatrices is to give a good approximation of the invariant manifolds not only in the real line but in a complex strip wich reaches a neighborhood of order $\OO(\eps)$ of the singularities.
In this paper, this is the result given in Theorem \ref{th:ExistenceManifolds}.
To prove this theorem, one has to impose the condition
$\eps^{\eta-1}(\eps+\sqrt{\al})$ small enough. Looking at the first order of the perturbed invariant manifolds given by the half Melnikov functions \eqref{def:HalfMelnikov:uns} and \eqref{def:HalfMelnikov:st} one can see that this condition is necessary. In fact, if $\eps^{\eta-1}(\eps+\sqrt{\al})$ is of order 1, the half Melnikov functions have the same size as the separatrix when $u-\rr_-\sim\eps$. A more careful analysis shows that  $\pa_uT_0(u)$ and the remainder $\pa_uT(u,\tau)-\pa_uT_0(u)$ also become
of the same size when $u-\rr_-\sim\eps$. This implies that, when $\eps^{\eta-1}(\eps+\sqrt{\al})$ is
not small,  at a distance $\OO(\eps)$ of the singularities the unperturbed
separatrix is not a good approximation of the perturbed invariant manifolds and
then in these cases Melnikov fails to predict correctly the splitting of
separatrices. The correct approach when $\eps^{\eta-1}(\eps+\sqrt{\al})\sim 1$ is to look for
the first order of the invariant manifolds at a distance $\OO(\eps)$ of the
singularities in a different way. As was first pointed out in
\cite{Lazutkin84russian} in the study of the Standard Map, these new first orders are solutions of a different equation usually called \emph{inner equation}, which is independent of $\eps$ (see \cite{Gelfreich97, Gelfreich00,GelfreichS01,BaldomaS08, MartinSS10b, GaivaoG11}). For classical Hamiltonian Systems, the inner equation is a new Hamilton-Jacobi
equation, which has been studied in several models in \cite{OliveSS03, Baldoma06, GuardiaOS10,
BaldomaFGS11}. Nevertheless, in all these works the inner equation was
considered for points at a distance $\OO(\eps)$ of the singularity of the unperturbed
separatrix since it was this singularity which was giving the exponentially small
coefficient in the splitting. Nevertheless, for system
\eqref{def:sistema} one has to proceed more carefully since this
coefficient depends on $\al$  (see Proposition \ref{prop:Melnikov} and Theorem \ref{th:Main}). Therefore, we will obtain different inner equations
depending on the relation between $\al$ and $\eps$. Namely, in some cases ($0<\al\leq\eps^2$) we
will have to study the inner equation close to $u=i\pi/2$ and in others ($\eps^2\ll\al\leq \al_0<1$) close to
$u=\rr_-$.

We start with the case $0<\al\ll\eps^2$, which corresponds to a wide
analyticity strip. As we have explained Proposition
\ref{prop:Melnikov}, the exponential coefficient in the Melinkov function is
given by the imaginary part of the singularity of the unperturbed separatrix.
Namely, the analyticity strip is so wide that the Melnikov function behaves as in
the entire case $\al=0$. Then, it can be easily seen that
the singular change that one has to perform is given by $u=i\pi/2+\eps z$ and
$\varphi^{u,s} (z,\tau)=\eps T^{u,s}(i\pi/2+\eps z,\tau)$ (see \cite{OliveSS03}). Now the function
$\varphi$ is the solution of a new Hamilton-Jacobi equation. If we let
$\eps\rightarrow 0$ in this equation we obtain the inner equation
\begin{equation}\label{def:Inner:cas1}
 \pa_\tau\varphi_0-\frac{z^2}{8}\left(\pa_z\varphi_0\right)^2+\frac{2}{z^2}
(1-\mu\sin\tau)=0,
\end{equation}
which does not depend either on $\eps$ or $\al$.

Certain solutions $\varphi_0^{u,s}$ of this equation are the candidates to be the first order of the
functions $\varphi^{u,s}$, namely the first order of the parameterizations of
the invariant manifolds close to the singularity. Then, the study of their
difference would give the first order of the difference between the invariant
manifolds. This equation was already studied in \cite{OliveSS03} using
Resurgence Theory and in \cite{Baldoma06} using classical functional analysis
techniques.

The case $\al\sim\eps^2$ is the transition case. In Proposition
\ref{prop:Melnikov} we have seen that  the exponential coefficient
is given by $\pi/2$ but  the residuums of both $u=\rr_\pm$ make a contribution
to the Melnikov function. Let us assume that 
\[
 \al(\eps)=\al_*\eps^2+\OO\left(\eps^3\right).
\]
The natural change to inner variable is given by $u=i\pi/2+\eps z$
and the rescaling in the generating function by $\varphi^{u,s}(z,\tau)=\eps
T^{u,s}(i\pi/2+\eps z,\tau)$, obtaining , taking into account \eqref{eq:Expansio:Sing}, a new inner equation
\[
 \pa_\tau\varphi_0-\frac{z^2}{8}\left(\pa_z\varphi_0\right)^2+\frac{2}{z^2}
+\mu\frac{i}{2\left(z^2+2i\al_*\right)}\sin\tau
=0.
\]

Finally we deal with the case $\al\gg\eps^2$. We first consider the case $\al$ 
independent of $\eps$, and from it we will deduce the case $\al\sim\eps^\nu$ with $\nu\in (0,1)$. 
If we take a fixed $\al$, we have seen that one has to 
study the parameterizations of the invariant manifolds close to $u=\rr_-$ and
the singularity $u=\rr_+$ does not play any role in the size of the splitting.
Note that now the limiting case is $\eta=1$, that is, we deal with
equation
\[
 \eps\ii\pa_\tau T+\frac{\cosh^2
u}{8}\left(\pa_uT\right)^2-\frac{4}{\cosh^2u}+\mu\eps \Psi(u)\sin\tau=0.
\]
As a first step, we can expand the parameterization of the invariant manifold 
$T^{u,s}(u,\tau)$ as a power series of $\eps$. It can be easily seen that 
\[
 T^{u,s}(u,\tau)\sim\sum_{k\geq 0}\eps^kT_k(u,\tau)
\]
where $T_0$ corresponds to the separatrix \eqref{def:T0}. One can see that the terms of the series satisfy
that for $k\geq 0$,
\[
 \pa_u T_k(u,\tau)\sim \frac{1}{(u-\rr_-)^k}
\]
and therefore they all become of the same size at a distance $\eps$ of the
singularity. Nevertheless, in this case one has to be more careful, since if one
considers the asymptotic size of the power series terms of the generating
function $T$ instead of $\pa_u T$, we have that
\[
 T_0(u)\sim 1\,\,\,\text{ and }\,\,\, T_k(u,\tau)\sim
\frac{1}{(u-\rr_-)^{k-1}}\,\,\text{ for }k\geq 1.
\]
Namely, at a distance of order $\OO(\eps)$ of $u=\rr_-$ all the terms with
$k\geq 1$  become of the same order but $T_0$ is still bigger. Therefore, it is more convenient to deal with the function $Q=T-T_0$. Now, one can
consider the change to inner variable  $u=\rr_-+\eps z$ and the rescaling in the
generating function by $\phi(z,\tau)=\eps Q(\rr_-+\eps z,\tau)$,  obtaining the inner equation
\[
 \pa_\tau\phi+\pa_z\phi+\frac{\cosh^2\rr_-}{8}\left(\pa_z\phi\right)^2-\mu 
\frac{\de_2(\al)}{z}\sin\tau=0,
\]
where $\de_2(\al)$ is the funtion defined in \eqref{def:delta2}.

Proceeding analogously, one can deduce the inner equation for the case $\al=\al_*\eps^\nu$ with $\nu\in (0,2)$. Recall that for this range of $\al$ the limiting case was $\eta=1-\nu/2$ (see Remark \ref{remark:SingularCase}). Namely, we deal with the equation following equation in $Q(u,\tau)=T(u,\tau)-T_0(u)$,
\[
 \eps\ii\pa_\tau Q+\pa_uQ+\frac{\cosh^2
u}{8}\left(\pa_uT\right)^2+\mu\eps^{1-\frac{\nu}{2}} \Psi(u)\sin\tau=0.
\]
As in the previous case, we study the inner equation close to $u=\rr_-$ and therefore the change to inner variable is still 
 $u=\rr_-+\eps z$. Nevertheless, now the rescaling in the
generating function is given by $\phi(z,\tau)=\eps^{1-\nu/2} Q(\rr_-+\eps z,\tau)$. Then, proceeding as before and taking into account the definition of $\de_2(\al)$ in \eqref{def:delta2}, one can obtain the following inner equation
\[
 \pa_\tau\phi+\pa_z\phi-\frac{i\al_0}{4}\left(\pa_z\phi\right)^2+\frac{\mu}{(1+i)\sqrt{\al_0}} 
\frac{1}{z}\sin\tau=0.
\]

One could expect that studying all these inner equations, one could obtain the true first asymptotic order of the difference between the perturbed invariant manifolds.

\bibliography{references}
\bibliographystyle{alpha}
\end{document}